\numberwithin{equation}{section}
\theoremstyle{plain}
\newtheorem{lemma}{Lemma}[section]
\newtheorem{thm}{Theorem}[section]
\newtheorem{defn}{Definition}[section]
\newtheorem{prop}{Proposition}[section]
\newtheorem{assumption}{Assumption}[section]
\newtheorem{remark}{Remark}[section]
\newcommand{\pa}{{\partial}}
\newcommand{\ignore}[1]{}
\DeclareMathOperator*{\argmin}{arg\,min}
\newcommand{\cA}{\mathcal{A}}
\newcommand{\cF}{\mathcal{F}}
\newcommand{\cH}{\mathcal{H}}
\newcommand{\cP}{\mathcal{P}}
\newcommand{\cX}{\mathcal{X}}
\newcommand{\gam}{{\gamma}}
\newcommand{\Gam}{{\Gamma}}
\newcommand{\eps}{{\epsilon}}
\def \a{\alpha}
\def \e{\varepsilon}
\def \1{\mathbf 1}
\def\E{\mathbb{E}}
\def\P{\mathbb{P}}
\def\R{\mathbb{R}}
\newcommand{\comm}[1]{}
\def\@setcopyright{}
\def\serieslogo@{}
\begin{document}

\begin{frontmatter}
\title{Kyle-Back models with risk aversion and non-Gaussian beliefs}
%\title{A sample article title with some additional note\thanksref{t1}}
\runtitle{Kyle-Back models with risk aversion}
%\thankstext{T1}{A sample additional note to the title.}

\begin{aug}
%%%%%%%%%%%%%%%%%%%%%%%%%%%%%%%%%%%%%%%%%%%%%%
%%Only one address is permitted per author. %%
%%Only division, organization and e-mail is %%
%%included in the address.                  %%
%%Additional information can be included in %%
%%the Acknowledgments section if necessary. %%
%%%%%%%%%%%%%%%%%%%%%%%%%%%%%%%%%%%%%%%%%%%%%%
\author[A]{\fnms{Shreya} \snm{Bose}\ead[label=e1]{sb18m@my.fsu.edu}}
\and
\author[B]{\fnms{Ibrahim} \snm{Ekren}\ead[label=e2]{iekren@fsu.edu}\thanks{Supported in part by NSF Grant DMS 2007826.}}

%%%%%%%%%%%%%%%%%%%%%%%%%%%%%%%%%%%%%%%%%%%%%%
%% Addresses                                %%
%%%%%%%%%%%%%%%%%%%%%%%%%%%%%%%%%%%%%%%%%%%%%%
\address[A]{Department of Mathematics,
Florida State University,
\printead{e1}}

\address[B]{Department of Mathematics,
Florida State University,
\printead{e2}}
\end{aug}

\begin{abstract}
We show that the problem of existence of equilibrium in Kyle's continuous time insider trading model can be tackled by considering a forward-backward system coupled via an optimal transport type constraint at maturity. The forward component is a stochastic differential equation representing an endogenously determined state variable and the backward component is a quasilinear parabolic equation representing the pricing function. By obtaining a stochastic representation for the solution of such a system,  we show the well-posedness of solutions and study the properties of the equilibrium obtained for small enough risk aversion parameter. In our model, the insider has exponential type utility and the belief of the market maker on the distribution of the price at final time can be non-Gaussian. 

\end{abstract}

\begin{keyword}[class=MSC2020]
\kwd[Primary ]{60H30}
\kwd{60J60}
\kwd[; secondary ]{91B44}
\end{keyword}

\begin{keyword}
\kwd{Kyle's model with risk averse informed trader}
\kwd{optimal transport}
\kwd{Markov bridges}
\kwd{quasilinear partial differential equations}
\end{keyword}

\end{frontmatter}
%%%%%%%%%%%%%%%%%%%%%%%%%%%%%%%%%%%%%%%%%%%%%%
%% Please use \tableofcontents for articles %%
%% with 50 pages and more                   %%
%%%%%%%%%%%%%%%%%%%%%%%%%%%%%%%%%%%%%%%%%%%%%%
%\tableofcontents

\section{Introduction}

The objective of this paper is to establish the existence of equilibrium in financial markets with long-lived asymmetric information. The model we consider is the case with risk averse informed trader of the model introduced in the seminal work of Kyle \cite{kyle} and extended by Back in \cite{ba}. We call these type of models Kyle-Back models. In these models, the price process is obtained via an equilibrium among three types of market participants. The first type of participant is the so-called informed trader who has an informational advantage. The aim of the informed trader is to make the largest expected profit from her superior information while trading against a market maker. Besides the cumulative orders $X_t$ of the informed trader, the market maker also receives orders $Z_t$ from the so-called noise traders. However, the market maker cannot distinguish the orders of the informed trader from the orders of the noise traders. Therefore, the objective of the market maker is to filter the information of the informed trader by only observing the total cumulative order flow $Y_t=X_t+Z_t$ that he receives and the objective of informed trader is to take advantage of his private information while hiding it from the market maker.

Kyle-Back models are canonical models of market microstructure theory and a vast number of extensions of these models have been considered in the literature, \cite{aa2012,ba,back1993,cel,bp,bal,bia,cdf,cc2007,cor2010,cd1}. In particular, similar to the case studied in this paper, \cite{baruch,c,las2007,hs1994,s} study the problem with risk averse informed traders. With the exception of \cite{s}, with risk averse informed traders, these papers either assume that the belief of the market maker on the final value of the price of the asset is Gaussian or only establishes necessary or sufficient conditions on the existence of the equilibrium without being able to check these conditions. The Gaussianity assumption in particular implies that the prices can become negative with positive probability.  Under this assumption, the authors find that the price impact (i.e. sensitivity of the price in changes in total order flow) has to be deterministic which is unrealistic. In fact, given the restrictive definition of equilibrium taken, in \cite{c}, the author shows that, if the informed agent is risk averse, then an equilibrium can only exist if the belief of the market maker on the final value of the asset is Gaussian. Unlike these papers, we do not require a restrictive framework where explicit computations are possible, but exhibit via a fixed point condition both the underlying state and the pricing rule. Then, using this fixed point, we prove that an equilibrium exists for a fairly large class of belief of the market maker and in equilibrium the strategies of the informed trader and the market maker are path-dependent functionals of the total cumulative orders. We also show that this equilibrium can be characterized by a system of backward quasilinear parabolic partial differential equations\footnote{Somehow similar to the fast diffusion type equations in \cite{bl,lo}.} and a forward Fokker-Planck equation coupled via an optimal transport type constraint. 

{
The most challenging problem while establishing the existence of an equilibrium in Kyle-Back models comes from the fact that the informed trader does not only control his position but also, through the filtering problem that the market maker solves, the belief of the market maker on the final value of the price of the asset. With Gaussian beliefs, this filtering problem can be explicitly solved via the Kalman filter and the evolution of the belief of the market maker can be explicitly written. Thus, if the problem is appropriately stated, the problem of the informed agent becomes a simple stochastic control problem of a finite dimensional process. In various extensions of Kyle-Back models, this reduction to a finite dimensional control problem is achieved via the introduction of an auxiliary state process $\xi$ which solves a forward SDE driven by the total cumulative orders $Y$. {In the literature, the dynamics of $\xi$ are described via the so-called weight function. As a condition for the existence of equilibrium, one can derive a system of partial differential equations for the weight function and the pricing rule similar to the ones derived here (see \cite{ccd,c,cn}). One of our main contributions is to choose the correct weight function (or the solution to the system of partial differential equation) by choosing the correct final condition of the system of equations via a fixed point condition.} 

To construct our equilibrium, similarly to the literature, we conjecture that the market maker's pricing rule only depends on a state process $\xi$ whose dynamics are to be determined and which is also observable by all market participants. Under this condition, we first solve the quasilinear pricing PDE and the Fokker-Planck equation associated to the diffusion of $\xi$. These two equations can be interpreted as necessary condition on the weight function for the existence of equilibrium as stated in \cite{cn}. We also require that $\xi$ generates the same filtration as $Y$ so that the utility maximization problem of the informed trader can be stated as a control problem of $\xi$. The novelty of our work is that we do not guess the weight function. Indeed, in the system we study, the final condition of the quasilinear PDE is yet to be determined. We show that this final condition can be determined by an optimal transport type coupling at maturity between the quasilinear PDE and the Fokker Planck equation. This fixed point condition allows us to use the literature in Markov bridges (for example in \cite{ccd}) and to prove the existence of an equilibrium. Thus, our work shows how to use optimal transport theory to combine the well-known necessary conditions of equilibrium on the pricing rules (see \cite{c,las2007,cn}) and the literature on Markov bridges to find an equilibrium. The connection between optimal transport and Kyle-Back models is also explored in \cite{cel}, with a risk-averse market maker (rather than a risk-averse informed trader) and multiple assets. Due to the lack of risk aversion of the informed trader, the paper obtains an equilibrium not based on a fixed point between a PDE and a Fokker-Planck equation, but rather solving the Monge problem and a quadratic PDE.

Our fixed point condition between the Fokker-Planck equation and the quasilinear PDE is somehow reminiscent of mean-field games. However, unlike mean-field games where the coupling between the forward and backward equations is due to the optimal controls of the agents, our coupling is an optimal transport constraint at maturity that determines the final condition of the quasilinear PDE. A major novelty of our fixed point approach is that it allows the use of numerical methods to compute equilibria which is novel in Kyle-Back models. As an example we use Picard's iteration scheme to numerically compute the fixed point for few cases. We observe the convergence of the algorithm and obtain the pricing rules at maturity. }

In order to obtain the existence of such a fixed point we use different tools in optimal transport theory, stochastic analysis and partial differential equations. First of all, using tools based on Levy's parametrix method (\cite{tt}), we provide a solution to the Fokker-Planck equation via the quasilinear pricing PDE. Then, in order to prove the continuous dependence of the solution of the quasilinear pricing PDE on its final condition (which is needed to have a fixed point), we establish a stochastic representation for the solution of a system of quasilinear parabolic equation as the minimizer of an expectation. This representation is novel and is of independent interest.  Since in one dimension, optimal transport maps can be explicitly written using the cumulative distribution functions, we can in fact write the fixed point condition as a condition on the final value of the pricing rule\footnote{With multiple assets, the transport constraint would have required the study of the continuous dependence problem of a Monge-Ampere equation.}. Then, we show the existence of the fixed point via the Schauder fixed point theorem. The compact set where we apply the Schauder fixed point theorem is determined via an application of Caffarelli's contraction theorem \cite{caf,ko}. 

{A similar existence result for same type of system was also obtained in the unpublished PhD thesis \cite{s} under more restrictive assumptions\footnote{We are thankful to an anonymous referee for providing us the link for this thesis.} of boundedness of price but without any condition on risk aversion. Note that unlike in \cite{s}, we show the optimality of the strategy of the insider among all semimartingale strategies and allow $\tilde v$ to be unbounded. More importantly, the optimal transport constraint completely elucidates the coupling between the forward and the backward equations and it is extendable to a multi-asset framework where one can use numerical methods from (multidimensional) optimal transport literature to compute an equilibrium.  }

In the equilibrium we construct, the strategy of the informed trader is to simply force the final value $\xi_T$ of $\xi$ to reach a level that depends on her private information. This can be achieved by considering the appropriate Markov bridge. In fact, the equilibrium strategy of the informed trader is linear in the current value of $\xi$. The pricing rule of the market maker is to quote a price that only depends on the current value of $\xi$ and the solution of the quasilinear pricing PDE. With Gaussian beliefs, we find the same equilibrium as in \cite{c}, and the price impact\footnote{Also called Kyle's Lambda.}, which is the sensitivity of the price in the total demand is deterministic. However, in general, the price impact is a deterministic function of the stochastic process $\xi$ and therefore is stochastic. 

{The pricing rule is a $C^{1,2}(\Lambda)$-path-dependent functional of the paths of $Y$ (see \cite{cn}). In fact, all the path-dependence in the problem is carried by the process $\xi$ which is a pathwise defined, $C^{1,2}(\Lambda)$-functional of $Y$ and all other components of the equilibrium are functions of the current value of $\xi$ (similarly to \cite{ccd}). One can check the structure of path-dependence of the equilibrium satisfies the necessary condition in \cite{cn}. However, unlike \cite{cn}, our purpose is neither to study the structure of path-dependence nor to state necessary and sufficient conditions for the existence of equilibrium. We aim to provide the existence of the equilibrium by constructing the relevant fixed point. }

In equilibrium, the price impact is a supermartingale and its inverse that we call the market depth is a submartingale. We find that the drift part (which is positive) of the market depth has minimal growth for Gaussian beliefs. This means that in average, the market depth will have smaller growth if the belief of the market maker is Gaussian. We argue that with non-Gaussian beliefs the informed trader has to face an additional risk which is the stochasticity of the price impact thus, she will be more conservative in her trading. This allows in equilibrium to have, in average, a larger growth of market depth. 

The rest of the paper is organized as follows. In Section \ref{s.formulation}, we state the Kyle-Back model we aim to solve. Section \ref{s.fp} states the main mathematical results needed to construct the equilibrium. In particular, in Theorem \ref{thm:fixed}, we construct a fixed point which leads to the equilibrium. Section \ref{s.mr} contains the statements regarding the existence of the equilibrium and its properties. In Sections \ref{s.proof1} and \ref{s.construct} we prove the results of the earlier sections. 

\subsection{Notations}
We fix $\a\in (0,1)$ and for $l \geq 0$ denote
$$C_{l,\a}=\{\phi:\R\mapsto \R: 0\leq \pa_{\xi\xi} \phi\leq l, \, \phi(0)=0\}\cap C^{4+\a}_{loc}.$$
We also fix $T>0$, the maturity of the problem and denote $\Lambda=[0,T]\times C([0,T];\R)$ the set of continuous real valued paths on $[0,T]$. 

Let $(\Omega, \cF,(\cF_t)_{t\in [0,T]},\P)$ be a filtered probability space satisfying the usual conditions of right continuity and completeness. 
We assume that for all $t\in [0,T]$, $(\cF_t)$ is the augmentation of the $\sigma-$ algebra generated by $\{\tilde v\}\cup \{B_s:s\leq t\}$ for some $\R-$valued random variable $\tilde v$ and a one dimensional $\cF$-Brownian motion $B$.
{Since $B$ is a $\cF$-Brownian motion and $\tilde v$ is $\cF_0$ measurable,} $B$ and $\tilde v$ are independent.  We denote by $\nu$ the distribution of $\tilde v$, $F_\nu$ the cumulative distribution function of $\nu$, and $p_\nu$ its density function if it exists.

Based on \cite{cont,dup,etz}, we define a {psuedo}metric on $\Lambda$ by 
$d_\infty((t,y),(s,\tilde y))=\sup_{r\in[0,T]}|y_{r\wedge t}-\tilde y_{r \wedge s}| +|t-s|$. 
{ Note that any functional $u:\Lambda\to \R$ that is continuous with respect to $d_\infty$ is in fact non-anticipative. Indeed, for any $(t,y),(t,\tilde y)\in \Lambda$ that are equal up to time $t$, we have that 
$d_\infty((t,y),(t,\tilde y))=0$. Thus, by continuity $u(t,y)=u(t,\tilde y)$ which is the definition of non-anticipativity.}\footnote{ 
The relation between non-anticipativity and adaptedness comes from the fact that if $u\in C(\Lambda)$ is non-anticipative and $Y$ is a process adapted to a given filtration, then $t\mapsto u(t,Y_\cdot)$ is also adapted to the same filtration by  \cite[Theorem 2.7]{cont}.} We say that $u\in C(\Lambda)$ is an element of $C^{1,2}(\Lambda)$ if there exist three processes
$\pa_t u,\pa_y u,\pa_{yy}u\in C(\Lambda)$ so that for all continuous semimartingale $Y$ with bounded characteristics
$$u(t,Y_\cdot)=u(0,0)+\int_0^t \pa_t u(r,Y_\cdot)+\frac{1}{2} \pa_{yy}u(r,Y_\cdot)\frac{d\langle Y\rangle_r }{dt}dr+\int_0^t {\pa_y}u(r,Y_\cdot)dY_r,\, \P\mbox{-a.s.}$$

\section{Problem Formulation}\label{s.formulation}
As in the classical Kyle-Back models \cite{ba,back1993,baruch,cd1,kyle}, we consider a financial market with interest rate $0$. A single asset is traded in continuous time and at time $t=T$ the price of this asset is announced to be $\tilde v$. We denote by $(P_t)$ the price of the asset that will be determined by an equilibrium condition. 

The trading takes place between three groups of participants. The first group is the so-called noise traders. We assume that these agents trade due to some exogenous needs and their cumulative orders is $Z_t=\sigma B_t$ for all $t\in [0,T]$ and for some fixed $\sigma>0$. The second group is the so called informed trader. At time $t=0$, the informed trader learns the final value $\tilde v$ of the asset. We denote by $X_t$ the cumulative orders of the informed trader up to time $t\in [0,T]$. Besides $\tilde v$, the informed trader observes the process $P$. The third group is risk neutral market makers (henceforth "the market maker") who only observes the total orders $Y=X+Z$ and quotes a price for the risky asset. Since, the market maker cannot distinguish the orders coming from the informed trader and the orders coming from the noise traders, the filtration of the market maker $(\cF^m_t)$ is the $\P$-augmentation of the $\sigma$-algebra generated by $Y$ which will be a strict subset of $\cF_t$ for all $t\in[0,T)$ in equilibrium. We now define the class of pricing rules for the market maker.
 \begin{defn}
A pricing rule is a functional $H\in  C^{1,2}(\Lambda)$ that satisfies
\begin{align}\label{eq:admm}
\E\left[H^2(T,\sigma B_\cdot)+\int_0^TH^2(t,\sigma B_\cdot)dt\right]<\infty
\end{align}
and $\pa_yH(t,y_\cdot)>0$ for $(t,y)\in [0,1)\times C([0,T];\R)$. 
The set of pricing rules is denoted $\cH$. 
\end{defn}
{If the quadratic variation of $Y$ is $\sigma^2 t$, then under regularity conditions on $H$, the condition $\pa_yH(t,y_\cdot)>0$ implies that the filtration generated by $Y_t$ and $H(t,Y_\cdot)$ are the same. Indeed, dividing the differential of $H(t,Y_\cdot)$ by $\pa_yH(t,Y_\cdot)$, $Y$ solves the path-dependent SDE 
$$dY_t=-\frac{\pa_t H(t,Y_\cdot)+\frac{\sigma^2\pa_{yy} H(t,Y_\cdot)}{2}}{\pa_y H(t,Y_\cdot)}dt+\frac{1}{\pa_y H(t,Y_\cdot)}d H(t,Y_\cdot)$$
and is therefore adapted to the filtration generated by $H(t,Y_\cdot)$ under some regularity assumptions on $H$}. Thus, we identify that the filtration of the informed trader as $(\cF_t)$\footnote{This statement is the only reason why we introduce path derivatives of functionals.}.

We assume that the informed trader has an exponential utility 
$$-\gamma \exp(-\gamma W_T)$$
where $W_T$ is her gains from trading at final time and $\gamma>0$ is her risk aversion parameter. 
It is shown in \cite{ba} that 
$$W_T=\int_0^T (\tilde v-P_t)dX_t-\langle X,P\rangle_T.$$
We now define the class of admissible strategy for the informed trader.
\begin{defn}
Given the pricing rule $H\in \cH$, an admissible strategy for the informed trader is an $(\cF_t)$ adapted continuous semimartingale $X$ with $X_0=0$ and satisfying 
\begin{align}\label{eq:adi}
\E\left[e^{\gamma\sigma\int_0^T(\tilde v- H(t,X_\cdot+Z_\cdot))dB_t-\frac{\gamma^2\sigma^2}{2}\int_0^T(\tilde v- H(t,X_\cdot+Z_\cdot))^2dt}\right]=1.
\end{align}
We denote by $\cA(H)$ the admissible strategies given $H$. 
\end{defn}

Note that unlike \cite{s,cd1,cdf}, we allow diffusive strategies for the informed trader. In equilibrium any diffusive part of $X$ will be costly for the informed trader and the equilibrium strategy of the informed trader will be absolutely continuous (see \cite{cd} for related questions). However, unlike \cite{ba}, we choose to assume $X$ to be continuous to avoid technicalities due to the presence of jumps. 
{Since $\tilde v$ is $\cF_0$ measurable,  $$e^{\gamma\sigma\int_0^\cdot(\tilde v- H(t,X_\cdot+Z_\cdot))dB_t-\frac{\gamma^2\sigma^2}{2}\int_0^\cdot(\tilde v- H(t,X_\cdot+Z_\cdot))^2dt}$$ is a $\cF$-Doleans-Dade exponential.
The condition \eqref{eq:adi} is a martingality condition for this Doleans-Dade exponential. 
Our decomposition \eqref{eq:expansionutility} hints that if this martingality condition is not satisfied, the utility of the informed trader might degenerate to $0$. 

Our admissibility condition is also a relaxation of the admissibility in \cite{ba}. Indeed, the admissibility condition in \cite{ba} is $\E[\int_0^T H^2(t,X_\cdot+Z_\cdot)dt]<\infty$. However, in \cite{ba}, this condition is only needed to show that $t\mapsto \int_0^t H(s,X_\cdot+Z_\cdot)dB_s$ is a $\cF$-martingale. In this sense, we directly require that the relevant (exponential) local martingale is a martingale rather than giving an integrability condition leading to this martingality.

We show in the Appendix that in the equilibrium that we construct any strategy of the informed trader satisfying the boundedness from below assumption of the realized gains from trading, 
\begin{align}\label{eq:bb}
\inf_{t\in[0,T]} W_t:=\inf_{t\in[0,T]}\int_0^t (\tilde v-P_s)dX_s-\langle X,P\rangle_t\geq C(\tilde v)
\end{align}
for some finite deterministic function $C$, is admissible. 
}
However, we are unable to check that our candidate equilibrium strategy satisfies \eqref{eq:bb} or any of its simple extensions. Thus, we relax the admissibility condition from \eqref{eq:bb} to \eqref{eq:adi} to be able to claim the admissibility of our candidate equilibrium strategy and carry out the proof of its optimality. 

Additionally, if $\nu$ is compactly supported and the pricing rule $H$ is bounded (which is true in all equilibrium if $\nu$ is compactly supported), then
Novikov's condition implies that any continuous $\cF$-semimartingale $X$ with $X_0=0$ is admissible. Thus, our set of admissible strategy contains the set of admissible strategies in \cite[Definition 3]{s}.

We now state the equilibrium condition for the interaction between the informed trader and the market maker which is also given in \cite{ba,c,cn2,cn}.

\begin{defn}
A pair $(H^*,X^*)$ of pricing rule $H^*\in \cH$ and admissible strategy $X^*\in \cA(H^*)$ is an equilibrium if the following conditions are satisfied.
\begin{itemize}
\item If the informed trader uses the strategy $X^*$, the price process 
$P_t=H^*(t,X^*_\cdot+Z_\cdot)$ is rational in the sense that it is a $(\cF^m_t)$ martingale with $P_T=\tilde v$ a.s.
\item If the market maker quotes the price $P_t=H^*(t,Y_\cdot)$ then $X^*$ is a maximizer for the problem 
$$\sup_{X\in \cA(H^*)}\E[-\gamma\exp(-\gamma W_T)|\cF_0].$$
\end{itemize}
\end{defn}
The condition $P_T=\tilde v$ is more restrictive than usual definitions of equilibria in the literature. {The condition is not an admissibility condition for the strategies of the agents but an additional property of the equilibrium that we construct. Thus, we have chosen to include it in our definition. } 

\section{A fixed point problem}\label{s.fp}
In this section, we state results needed to construct an equilibrium. We make the following assumptions on the data of the problem. 
\begin{assumption}\label{assum:main}
We assume either of the following conditions for $\nu$. 
\begin{longlist}
\item $\nu$ has density $p_\nu(x)=\exp(-V(x))$ for some $\kappa$-strongly convex and three times continuously differentiable function $V$ with $\kappa>0$.
\item $\nu$ is compactly supported with $p_\nu$ three times continuously differentiable and bounded from below on the support of $\nu$.
\end{longlist}
\end{assumption}
\begin{remark}
\begin{longlist}
\item Note that under either of the conditions we have $$\int x^2 \nu(dx)<\infty.$$

\item Unfortunately, the lognormal distribution does not satisfy Assumption \ref{assum:main}. However, if we truncate this distribution (ii) is satisfied for any level of truncation.

\item The assumption (i) is made so that we can use the celebrated Caffarelli's contraction theorem \cite{caf,ko} to apply the Schauder fixed point theorem.
\end{longlist}
\end{remark}

In order to exhibit the equilibrium strategies we start with the following quasilinear parabolic PDE. 
\begin{lemma}\label{lem:pdep}
For all $l \in (0,\frac{1}{T\sigma^2\gamma})$ and $\phi\in C_{l,\a}$, there exists a unique $P=P^\phi:[0,T]\times \R\mapsto \R$ of class $C^{1,2}([0,T]\times \R)$ solving the PDE
\begin{align}\label{eq:pdep}
 \pa_tP(t,\xi)+\frac{\sigma^2\pa_{\xi\xi}P(t,\xi)}{2(1-\gamma\sigma^2 \pa_\xi P(t,\xi)(T-t))^2}&=0\\
P(T,\xi)&= \pa_\xi \phi(\xi).
\end{align}
Additionally, $P$ is uniformly Lipschitz continuous in $\xi$ with Lipschitz constant bounded by l, for all $(t,\xi)\in [0,T]\times \R$ the inequalities
\begin{align}\label{eq:lip}
1>\gamma\sigma^2 Tl\geq \gamma\sigma^2 (T-t)\pa_\xi P(t,\xi) >0
\end{align}
are satisfied, and \eqref{eq:pdep} is uniformly elliptic. 
\end{lemma}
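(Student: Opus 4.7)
The plan is to regard \eqref{eq:pdep} as a quasilinear parabolic equation, reduce it to a uniformly parabolic problem by truncating the nonlinear coefficient in a way consistent with the target bounds \eqref{eq:lip}, invoke classical existence theory in the truncated setting, and then verify via the maximum principle that the truncation is inactive, so that the truncated solution actually solves \eqref{eq:pdep}.

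First I would reverse time by setting $\tilde P(s,\xi)=P(T-s,\xi)$, converting the terminal value problem into the forward Cauchy problem
\[
\partial_s \tilde P = \frac{\sigma^2\,\partial_{\xi\xi}\tilde P}{2(1-\gamma\sigma^2 s\,\partial_\xi \tilde P)^2},\qquad \tilde P(0,\xi)=\partial_\xi \phi(\xi).
\]
Because $\phi\in C_{l,\a}$, the initial datum $\partial_\xi \phi$ belongs to $C^{3+\a}_{loc}$, is nondecreasing, and is $l$-Lipschitz since $\partial_{\xi\xi}\phi\in [0,l]$. I would then fix a smooth truncation $\chi:\R\to [0,l]$ with $\chi(p)=p$ on $[0,l]$ and bounded derivatives, and replace $\partial_\xi \tilde P$ by $\chi(\partial_\xi \tilde P)$ in the denominator. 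The truncated diffusion coefficient
\[
a^\chi(s,p)=\frac{\sigma^2}{2(1-\gamma\sigma^2 s\,\chi(p))^2}
\]
is then smooth, globally Lipschitz in $p$, and pinched between $\sigma^2/2$ and $\sigma^2/(2(1-\gamma\sigma^2 T l)^2)$ uniformly on $[0,T]\times\R$, using crucially that $\gamma\sigma^2 T l<1$.

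For the truncated equation, standard quasilinear parabolic theory (Schauder estimates combined with a continuity method, or a Banach fixed point on the heat semigroup representation, in the spirit of Ladyzhenskaya, Solonnikov and Ural'tseva) produces a unique classical solution $\tilde P^\chi\in C^{1,2}([0,T]\times\R)$ whose derivatives inherit the H\"older regularity of $\partial_\xi \phi$. Differentiating in $\xi$, the function $U:=\partial_\xi \tilde P^\chi$ satisfies the semilinear parabolic equation
\[
\partial_s U = a^\chi(s,U)\,\partial_{\xi\xi}U + (\partial_p a^\chi)(s,U)(\partial_\xi U)^2
\]
with initial condition $U(0,\cdot)=\partial_{\xi\xi}\phi\in [0,l]$. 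At any interior spatial extremum of $U$ the quadratic first-order term vanishes and the diffusion term has a definite sign, so comparison against the constants $0$ and $l$ via the parabolic maximum principle yields $U\in[0,l]$ on $[0,T]\times\R$. On this range $\chi$ acts as the identity, so $\tilde P^\chi$ in fact solves the untruncated equation and reverting to $P$ produces the desired solution. The chain \eqref{eq:lip} and the uniform ellipticity are then immediate from $U\in[0,l]$ together with $\gamma\sigma^2 T l<1$; strict positivity of $\partial_\xi P$ for $t<T$ comes from the strong maximum principle applied to the linear equation driving $U$, assuming $\partial_{\xi\xi}\phi$ is not identically zero.

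The hard part will be justifying the maximum principle on the unbounded spatial domain, since $\partial_\xi \phi$ only has linear growth and the equation admits no natural decay at infinity. I would address this with barriers of the form $l+\varepsilon e^{\kappa s}(1+\xi^2)$ (and symmetrically $-\varepsilon e^{\kappa s}(1+\xi^2)$ for the lower bound), showing that for $\kappa$ sufficiently large they are super- and subsolutions of the linear equation for $U$ on $[0,T]\times\R$, and then sending $\varepsilon\downarrow 0$; this is the standard Phragm\'en-Lindel\"of argument for parabolic equations with bounded coefficients. Uniqueness within the stated class follows from the same reasoning applied to the difference of two candidate solutions, each of which satisfies $\partial_\xi P\in [0,l]$ a posteriori, so that the difference obeys a linear parabolic equation with bounded H\"older coefficients and must vanish.
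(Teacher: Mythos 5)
Your overall strategy is the same as the paper's: truncate the nonlinear coefficient so the equation becomes uniformly parabolic, invoke classical quasilinear theory, use a maximum-principle/comparison argument (with a perturbation to handle the unbounded spatial domain) to show the gradient stays in $[0,l]$, conclude the truncation is never active, and read off \eqref{eq:lip}, ellipticity and uniqueness from the bounds. The structural difference is the level at which you solve: you propose to solve the truncated equation for $P$ itself and only then differentiate, whereas the paper differentiates first and solves the divergence-form equation \eqref{eq:quasilin} for $R=\partial_\xi P$ (Lemma \ref{lem:quasi}), with terminal datum $\partial_{\xi\xi}\phi\in[0,l]$ \emph{bounded}, via \cite[Ch.\ V, Thm.\ 6.1]{lsu}, and then reconstructs $P$ by the explicit integration formula \eqref{eq:constintp}, which also pins down the integration constant and gives uniqueness of $P$ from uniqueness of $R$.

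This difference is where your proposal has a genuine gap. The initial datum $\partial_\xi\phi$ is only Lipschitz with linear growth, so the "standard quasilinear parabolic theory" you invoke for the truncated Cauchy problem (LSU-type theorems, Schauder plus continuity method) does not apply off the shelf: those results require bounded data, and the unboundedness cannot be removed by subtracting a linear function. Relatedly, your Phragm\'en--Lindel\"of step treats the equation for $U=\partial_\xi\tilde P^\chi$ as a parabolic equation "with bounded coefficients", but the effective first-order coefficient is $(\partial_p a^\chi)(s,U)\,\partial_\xi U$, i.e.\ it involves $\partial_{\xi\xi}\tilde P^\chi$, for which you have no global bound (the datum is only $C^{3+\alpha}_{loc}$, so $\partial_{\xi\xi\xi}\phi$ need not be bounded); the same unbounded-coefficient issue reappears in your uniqueness argument, where the zeroth/first-order coefficients of the equation for the difference involve $\partial_{\xi\xi}P$ of one of the solutions. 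The paper's route avoids all of this at once: the equation for $R$ has bounded data, the penalized problem \eqref{eq:approximation} is covered by the cited LSU theorem, the comparison is carried out on the bounded region $\xi^2\le 2M/\e$ created by the $\e\xi^2/2$ perturbation using a comparison theorem for divergence-form equations, and uniqueness of $P$ follows because any two solutions have derivatives solving \eqref{eq:pder} and the constant of integration in \eqref{eq:constintp} is forced. To repair your version you would either need an existence/regularity theorem for quasilinear Cauchy problems with unbounded Lipschitz data together with global gradient and Hessian estimates, or simply switch to the derivative equation as the paper does. (One further small point: the strict inequality in \eqref{eq:lip} indeed fails when $\partial_{\xi\xi}\phi\equiv 0$, so your caveat there is reasonable and matches a looseness already present in the statement.)
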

\begin{proof}
The proof of the Lemma is given in Section \ref{s.construct}.
\end{proof}

\begin{remark}
By following the ideas in \cite{lo} one can show that the pricing function of an option in a Bachelier market with linear permanent price impact $\lambda>0$ satisfies
$$\pa_t u^{ba}+\frac{\sigma^2}{2}\frac{ \pa_{\xi\xi}u^{ba}}{1-\lambda  \pa_{\xi\xi}u^{ba}}=0.$$ 
Differentiating this equation in $\xi$, we obtain the delta of an option satisfies
$$ \pa_{t}\Delta^{ba}+\frac{\sigma^2}{2}\frac{ \pa_{\xi\xi}\Delta^{ba}}{(1-\lambda \pa_{\xi}\Delta^{ba})^2}=0$$ 
which is \eqref{eq:pdep} for the choice $\lambda= \gamma \sigma^2 (T-t)$. 

Surprisingly, we find that the pricing rule in our model solves the same equation as the delta of an option in a Bachelier model with linear permanent price impact. \footnote{We are grateful to Gregoire Loeper for pointing this out.}
\end{remark}
Given $P^\phi$, we also define the functions
\begin{align*}
\chi=\chi^\phi:(t,\xi)\in[0,T]\times \R\mapsto \R\mbox{ and }\Gamma= \Gamma^\phi:(t,\xi)\in[0,T]\times \R\mapsto \R
\end{align*} 
by the expressions
\begin{align}\label{eq:defzg}
\chi(t,\xi)=&\xi-\gamma\sigma^2 (T-t)P(t,\xi)\\
\Gamma(t,\xi)
=&\int_t^T\frac{\sigma^2 \pa_\xi P(s,0)}{2(1-\gamma \sigma^2(T-s)\pa_\xi P(s,0))}ds\notag\\
&+\int_0^\xi P(t,r)dr-\frac{\gamma\sigma^2 (T-t)}{2}P^2 (t,\xi).\label{eq:constintg}
\end{align}
Thanks to \eqref{eq:lip}, $P$ and $\chi$ are increasing. This point will be crucial to prove that the candidate equilibrium strategy of the informed trader is optimal among semimartingale strategies as discussed and not only among absolutely continuous strategies (see \cite{cd}).

Given these definitions, by a direct computation, we have that 
\begin{align}\label{eq::pp}
P(t,\xi)=\frac{\pa_\xi\Gamma(t,\xi)}{\pa_\xi\chi(t,\xi)}.
\end{align}
Additionally after lengthy computations that are provided in the Appendix, $\Gamma, \chi$ satisfies
\begin{align}\label{eq:system}
 \pa_t\Gamma(t,\xi)+\frac{\sigma^2}{2(\pa_\xi\chi(t,\xi))^2}\pa_{\xi\xi}\Gamma(t,\xi)-\frac{\gamma\sigma^2P^2(t,\xi)}{2}=0\\
\pa_t\chi(t,\xi)+\frac{\sigma^2}{2(\pa_\xi\chi(t,\xi))^2}\pa_{\xi\xi}\chi(t,\xi)-{\gamma\sigma^2P(t,\xi)}=0\label{eq:system2}
\end{align}
with final condition $$\chi(T,\xi)=\xi\mbox{ and }\Gamma(T,\xi)=\phi(\xi).$$
Since, $\phi\in C_{l,\a}$, we also have $\phi(0)=\Gamma(T,0)=0$.
As discussed, in equilibrium, the price process is a path-dependent functional of the order flow $Y$. This path-dependence can be in fact simplified by introducing a state variable that will allow us to state and solve the problem as 
a finite dimensional Markov process control problem. The following lemma defines a fictitious state process for all $\phi\in C_{l,\a}$. The relevant state variable is defined endogenously via a fixed point condition on $\phi$.

\begin{lemma}\label{lem:measurability}
For all $l \in(0,\frac{1}{T\sigma^2\gamma})$, $\phi\in C_{l,\a}$ and strategy of the informed trader, there exists a unique semimartingale $\xi=\xi^\phi$ satisfying $\P$-a.s. for all $t\in [0,T]$,
\begin{align}\label{eq:dynxi}
\chi(t,\xi_t)=\chi(0,0)+Y_t+\int_0^t {\gamma\sigma^2}P (r,\xi_r)dr.
\end{align}
Additionally, $Y$ and $\xi$ generate the same filtration and the dynamics of $\xi$ is given by 
\begin{align}\label{eq:dxi}
d\xi_t=\frac{1}{\pa_\xi\chi(t,\xi_t)}\left(dY_t-\frac{\pa_{\xi\xi}\chi(t,\xi_t)}{2(\pa_\xi\chi(t,\xi_t))^2}(d\langle Y\rangle_t- d\langle \sigma B\rangle_t)\right)
\end{align}
and the mapping $(t,y_\cdot)\in \Lambda\mapsto \xi_t=\xi_t(y_\cdot)$ is non-anticipative and is in $C^{1,2}(\Lambda)$. 
\end{lemma}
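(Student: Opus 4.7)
My approach is to view \eqref{eq:dynxi} as a pathwise fixed-point equation obtained by inverting $\chi(t,\cdot)$, derive \eqref{eq:dxi} from It\^o's formula applied to this inverse, and extract the path-derivatives by identifying terms in the resulting stochastic differential.

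The key prerequisite is invertibility of $\chi(t,\cdot)$. Lemma \ref{lem:pdep} gives
$$\pa_\xi \chi(t,\xi) = 1 - \gamma\sigma^2 (T-t)\,\pa_\xi P(t,\xi) \in [c, 1], \qquad c:=1-\gamma\sigma^2 T l>0,$$
so $\chi(t,\cdot):\R\to\R$ is a smooth diffeomorphism whose inverse $\chi^{-1}(t,\cdot)$ is Lipschitz with constant at most $1/c$ in the second variable. For any continuous path $y$ (later applied pathwise to $Y$), \eqref{eq:dynxi} becomes the fixed-point equation
$$\xi_t = \chi^{-1}\!\left(t,\,\chi(0,0)+y_t+\int_0^t \gamma\sigma^2 P(r,\xi_r)\,dr\right).$$
The Lipschitz constants $1/c$ of $\chi^{-1}$ and $l$ of $P(r,\cdot)$ make the associated Picard operator $\mathcal{T}$ satisfy $|\mathcal{T}(\tilde\xi)_t-\mathcal{T}(\tilde\xi')_t|\le \frac{\gamma\sigma^2 l}{c}\int_0^t |\tilde\xi_r-\tilde\xi'_r|\,dr$, so iterating gives $\mathcal{T}^n$ a strict contraction on $C([0,T];\R)$ for large $n$ and yields a unique continuous fixed point $\xi_t=F(t,y_{\cdot\wedge t})$ that depends non-anticipatively on $y$. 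Applying this pathwise along the almost sure paths of $Y$ constructs the desired $\xi$; uniqueness among semimartingales is immediate since any other candidate must solve the same pathwise integral equation.

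Next, I set $u_t := \chi(0,0)+Y_t+\int_0^t \gamma\sigma^2 P(r,\xi_r)\,dr$, a continuous semimartingale with $\langle u\rangle=\langle Y\rangle$, and apply It\^o's formula to $\xi_t=\chi^{-1}(t,u_t)$. Using the inverse-function identities $\pa_u\chi^{-1}=1/\pa_\xi\chi$, $\pa_{uu}\chi^{-1}=-\pa_{\xi\xi}\chi/(\pa_\xi\chi)^3$, $\pa_t\chi^{-1}=-\pa_t\chi/\pa_\xi\chi$, together with $\pa_t\chi=\gamma\sigma^2 P-\sigma^2\pa_{\xi\xi}\chi/(2(\pa_\xi\chi)^2)$ from \eqref{eq:system2}, the two occurrences of the $\gamma\sigma^2 P$ drift cancel, and rearranging the surviving terms (using $\sigma^2\,dt=d\langle\sigma B\rangle_t$) reproduces exactly \eqref{eq:dxi}. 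This also shows $\xi$ is a continuous semimartingale. The equal-filtrations claim is then immediate: $\xi$ is adapted to the filtration of $Y$ by non-anticipativity of $F$, while \eqref{eq:dynxi} gives $Y_t=\chi(t,\xi_t)-\chi(0,0)-\int_0^t \gamma\sigma^2 P(r,\xi_r)\,dr$, so $Y$ is adapted to the filtration of $\xi$.

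Finally, I read off the Dupire derivatives from \eqref{eq:dxi} by matching the coefficients of $dY_t$, $d\langle Y\rangle_t$, and $dt$ (the last coming from $d\langle \sigma B\rangle_t$) with the defining It\^o-Dupire decomposition of a $C^{1,2}(\Lambda)$ functional:
$$\pa_y\xi(t,y_\cdot)=\frac{1}{\pa_\xi\chi},\quad \pa_{yy}\xi(t,y_\cdot)=-\frac{\pa_{\xi\xi}\chi}{(\pa_\xi\chi)^3},\quad \pa_t\xi(t,y_\cdot)=\frac{\sigma^2\pa_{\xi\xi}\chi}{2(\pa_\xi\chi)^3},$$
all evaluated at $(t,F(t,y_{\cdot\wedge t}))$. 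Smoothness of $\chi$ and continuity of $F$ on $(\Lambda, d_\infty)$ then make these functionals continuous, and \eqref{eq:dxi} provides the required It\^o-Dupire identity for every continuous semimartingale with bounded characteristics. The main obstacle I anticipate is establishing the $d_\infty$-continuity of $F$ itself, which requires Lipschitz stability of the pathwise fixed point under perturbations of the driving path; a second Gronwall argument with the same constants $1/c$ and $l$ gives $|F(t,y_{\cdot\wedge t})-F(t,\tilde y_{\cdot\wedge t})|\le C\|y-\tilde y\|_{\infty,[0,t]}$ and closes the $C^{1,2}(\Lambda)$ verification.
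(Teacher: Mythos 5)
Your proposal is correct and follows essentially the same route as the paper: invert $\chi(t,\cdot)$ using \eqref{eq:lip}, solve \eqref{eq:dynxi} as a pathwise fixed point with a Picard/Gronwall argument (giving non-anticipativity, uniqueness, stability in the sup norm, and the equal-filtrations claim), and obtain $\pa_y\xi=1/\pa_\xi\chi$ from the representation $\xi_t=\chi^{-1}(t,\chi(0,0)+y_t+\int_0^t\gamma\sigma^2P(r,\xi_r)dr)$. Your explicit It\^o computation deriving \eqref{eq:dxi} via \eqref{eq:system2} and the identification of the Dupire derivatives simply spell out steps the paper leaves implicit, and they check out.
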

\begin{proof}
The proof of the Lemma is provided in Section \ref{s.construct}.
\end{proof}

{Before proceeding with the construction of the fixed point we provide some interpretation of the quantities introduced. If the function $P$ and $\chi$ are given, \eqref{eq:dynxi} allows us to construct a novel state variable $\xi$ that generates the same filtration as $Y$. This process has the same role as the process $\xi$ defined at \cite[Hypothesis 4]{c}. The process $\frac{1}{\pa_\xi \chi(t,\xi_t)}$ has the role of the price pressure in the sense of \cite{c} and the function  $\frac{1}{\pa_\xi \chi}$ is a weight function in the sense of \cite{ccd}. The correct function will be determined by the choice of an appropriate $\phi$ which is still to be determined. The function $\Gamma$ has the role of a convex potential function. With our choice of $\chi$ and $\Gamma$, the expected utility of the informed trader can be computed via a simple convex conjugation as in \eqref{eq:expansionutility}.

We now introduce a novel process allowing us to describe the equilibrium dynamics of $\xi$ under $\cF^m$. For all $l \in(0,\frac{1}{T\sigma^2\gamma})$ and $\phi\in C_{l}$, let $\xi^0=\xi^{0,\phi}$ (the superscript $0$ means $0$ order flow from the informed trader) be the solution of 
\begin{align}\label{eq:sde}
\xi^0_t=\int_0^t\frac{\sigma}{(1-\gamma\sigma^2 (T-r)\pa_\xi P (r,\xi^0_r))} dB_r=\int_0^t\frac{\sigma}{\pa_\xi \chi (r,\xi^0_r)} dB_r
\end{align}
(i.e. $Y=\sigma B$ in \eqref{eq:dynxi}) and $\mu(=\mu^\phi)$ the distribution of $\xi^0_T$. Note that we only need distributional properties of $\xi^0$. Thus, this process could have been defined in a different probability space. }

The following proposition provides the fundamental solution to the Fokker Planck equation associated to \eqref{eq:sde} which is needed to establish the fixed point.
\begin{prop}\label{prop:f}
For all $l \in(0,\frac{1}{T\sigma^2\gamma})$, $\phi\in C_{l,\a}$, and $(r,x)\in [0,T]\times \R$ with $r<T$,  
the solution of the equation 
$$ \xi^0_t=x+\int_r^t\frac{\sigma}{\pa_\xi \chi^\phi (s, \xi^0_s)} dB_s,\mbox{ for }t\in [r,T]$$
 has density $\P( \xi^0_t\in dy)=G^\phi(r,x,t,y)dy$ with
\begin{align}\label{eq:densityG}
G^\phi(r,x,t,y)=\pa_\xi\chi^\phi(t,y) \frac{\exp\left(\gamma \Gamma^\phi(t,y)-\gamma\Gamma^\phi(r,x)-\frac{|\chi^\phi(t,y)-\chi^\phi(r,x)|^2}{2\sigma^2(t-r)}\right)
}{\sqrt{2\pi \sigma^2 (t-r)}}.
\end{align}
In particular, 
$\mu^\phi$ has a density which is given by 
\begin{align}\label{eq:fphi}
f_\phi:y\in \R\mapsto \frac{1}{\sqrt{2\pi \sigma^2 T}}\exp\left(\gamma \phi(y)-\gamma \Gamma^\phi(0,0)-\frac{|\chi^\phi(0,0)-y|^2}{2\sigma^2T}\right).
\end{align}
and $f_\phi$ only depends on $\{\pa_{\xi\xi}\phi(\xi):\xi\in \R\}$ and not on the choice of antiderivative of $\pa_{\xi\xi}\phi$. 
\end{prop}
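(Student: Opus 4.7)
The plan is to linearize the SDE for $\xi^0$ via the diffeomorphism $\chi^\phi(s,\cdot)$ and Girsanov's theorem, exploiting the PDE identities \eqref{eq:system} and \eqref{eq:system2}. By Lemma \ref{lem:pdep}, $\partial_\xi\chi^\phi = 1 - \gamma\sigma^2(T-s)\partial_\xi P^\phi \in (1 - \gamma\sigma^2 Tl,\,1]$, so $\chi^\phi(s,\cdot)$ is a smooth strictly increasing bijection of $\R$ and $P^\phi$ is globally Lipschitz in $\xi$ with constant $l$. Set $W_s := \chi^\phi(s,\xi^0_s)$; Itô's formula, combined with $d\langle\xi^0\rangle_s = \sigma^2/(\partial_\xi\chi^\phi)^2\,ds$ and \eqref{eq:system2}, collapses the $ds$-coefficient to
\begin{equation*}
dW_s = \gamma\sigma^2 P^\phi(s,\xi^0_s)\,ds + \sigma\,dB_s.
\end{equation*}
Set $\lambda_s = \gamma\sigma P^\phi(s,\xi^0_s)$ and define $Q$ by $dQ/dP|_{\mathcal{F}_T} = \mathcal{E}(-\lambda\cdot B)_T$. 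Under $Q$, $W_t - W_r$ is centered Gaussian with variance $\sigma^2(t-r)$; since $\chi^\phi(t,\cdot)$ is strictly increasing, a change of variables with Jacobian $\partial_\xi\chi^\phi(t,y)$ produces the Gaussian factor of \eqref{eq:densityG}.

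The key observation is that $dP/dQ|_{\mathcal{F}_t}$ depends on the path of $\xi^0$ only through the endpoint $\xi^0_t$. Applying Itô's formula to $\gamma\Gamma^\phi(s,\xi^0_s)$, the $ds$-coefficient collapses by \eqref{eq:system} to $\tfrac{1}{2}\gamma^2\sigma^2(P^\phi)^2 = \tfrac{1}{2}\lambda_s^2$, while the $dB_s$-coefficient simplifies using $\partial_\xi\Gamma^\phi = P^\phi\,\partial_\xi\chi^\phi$ (a consequence of \eqref{eq::pp}) to $\gamma\sigma P^\phi = \lambda_s$. Integration and rearrangement yield
\begin{equation*}
\int_r^t \lambda_s\,dB_s + \tfrac{1}{2}\int_r^t \lambda_s^2\,ds = \gamma\Gamma^\phi(t,\xi^0_t) - \gamma\Gamma^\phi(r,x),
\end{equation*}
which is exactly $\log(dP/dQ|_{\mathcal{F}_t})$. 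Multiplying the $Q$-density of $\xi^0_t$ by $\exp(\gamma\Gamma^\phi(t,y)-\gamma\Gamma^\phi(r,x))$ delivers \eqref{eq:densityG}. The formula \eqref{eq:fphi} for $f_\phi$ then follows by specializing to $r=0$, $x=0$, $t=T$ with the boundary data $\chi^\phi(T,y)=y$, $\partial_\xi\chi^\phi(T,y)=1$, $\Gamma^\phi(T,y)=\phi(y)$.

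For the invariance claim, note that changing the antiderivative of $\partial_{\xi\xi}\phi$ while preserving $\phi(0)=0$ amounts to replacing $\phi$ by $\phi+c\xi$ for some $c\in\R$. Equation \eqref{eq:pdep} is invariant under $P\mapsto P+c$, hence $P^{\phi+c\xi}=P^\phi+c$; this gives $\chi^{\phi+c\xi}=\chi^\phi-\gamma\sigma^2(T-t)c$ and, by direct substitution into \eqref{eq:constintg}, $\Gamma^{\phi+c\xi}=\Gamma^\phi+c\chi^\phi - \tfrac{1}{2}\gamma\sigma^2(T-t)c^2$. Substituting at $(t,\xi)=(0,0)$ into the exponent of \eqref{eq:fphi} evaluated at $\phi+c\xi$, all $c$-dependent terms cancel after short algebra, proving $f_{\phi+c\xi}=f_\phi$. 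The main obstacle I anticipate is the rigorous justification of the measure change: because $P^\phi$ only has linear growth in $\xi$, Novikov's condition may fail when $\gamma\sigma^2 Tl$ is close to $1$. This is resolved either by Beneš' condition applied to the linear SDE satisfied by $W$ (whose drift inherits linear growth in $W$ since the inverse of $\chi^\phi(s,\cdot)$ has linear growth at infinity), or by localization along exit times of $\xi^0$ from large intervals, with the passage to the limit justified by the Gaussian tail of the explicit density.
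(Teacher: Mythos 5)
Your proof is correct and takes essentially the same route as the paper: both pass to the process $\chi^\phi(s,\xi^0_s)$, which by \eqref{eq:system2} has constant diffusion $\sigma$ and drift $\gamma\sigma^2 P^\phi$, identify the exponential Girsanov factor with $\exp\left(\gamma\Gamma^\phi(t,\cdot)-\gamma\Gamma^\phi(r,x)\right)$ through \eqref{eq:system} and \eqref{eq::pp}, and obtain \eqref{eq:densityG} as the Gaussian kernel times this factor times the Jacobian $\partial_\xi\chi^\phi(t,y)$. The only deviations are in execution: the paper delegates the passage from the exponential identity to the density formula to \cite[Theorem 3.1]{tt} while you run Girsanov directly and justify the martingale property by Ben\v{e}s/localization (which works, since $P^\phi$ is globally Lipschitz hence of linear growth and the diffusion coefficient of $\xi^0$ is bounded), and for the independence of $f_\phi$ from the antiderivative the paper expands the exponent explicitly in terms of $\partial_{\xi\xi}\phi$ whereas you use the shift-equivariance $P^{\phi+c\xi}=P^\phi+c$ and direct cancellation, which is equally valid.
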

\begin{proof}
The proof is provided in Section \ref{s.construct}.
\end{proof}

Note that given $\phi\in C_{l,\a}$ (and therefore $P=P^\phi$ given by \eqref{eq:pdep}) we have defined $\chi=\chi^\phi$ via the equality \eqref{eq:defzg}. Given $\chi^\phi$ we have defined $\mu^\phi$ as the law of the marginal of the solution of \eqref{eq:sde} at time $T$. Given $\mu^\phi$ (in fact absolutely continuous with density \eqref{eq:fphi}), the Brenier theorem\footnote{We have provided in the Appendix \ref{s.optt} a summary of results from optimal transport theory that we need to construct the equilibrium. } in \cite{b,m} yields the existence and uniqueness (up to an additive constant) of a convex function that we denote $\Psi^\phi$ such that 
$\pa_\xi \Psi^\phi$ pushes $\mu^\phi$ onto $\nu$. The fixed point condition that we need to be able to construct an equilibrium is $\phi^*=\Psi^{\phi^*}$ up to an additive constant. Due to the condition at $0$ in the definition of $C_{l,\a}$, the additive constant to determine $\Psi^{\phi}$ is chosen to have $\Psi^\phi\in C_{l,\a}$. 
We now state the main result regarding the existence of this fixed point. 
\begin{thm}\label{thm:fixed}
Under Assumption \ref{assum:main} i, there exists $\gamma_0>0$, so that for all $\gamma\in (0,\gamma_0)$, there exists a  convex function $\phi^*\in C_{\frac{2}{\sqrt{\kappa\sigma^2 T}},\a}$ so that $\phi^*$ is the Brenier potential whose derivative $\pa_\xi \phi^*$ pushes $\mu^{\phi^*}$ to $\nu$ where $\mu^{\phi^*}$ is the distribution of $\xi^{0,\phi^*}$ defined by \eqref{eq:sde} and $P^*=P^{\phi^*}$ defined by \eqref{eq:pdep}.

Under Assumption \ref{assum:main} ii, the same result holds with $\phi^*\in C_{l,\a}$ for some $l$ depending on $\inf\{p_\nu(x):p_\nu(x)>0\}$ and the support of $\nu$. 
\end{thm}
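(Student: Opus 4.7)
The strategy is to recast the statement as a Schauder fixed point problem for the operator
\[
\Phi : \phi \mapsto \Psi^{\phi},
\]
where $\Psi^{\phi}$ is the convex Brenier potential, normalized by $\Psi^{\phi}(0)=0$, whose derivative pushes $\mu^{\phi}$ forward onto $\nu$. By Proposition~\ref{prop:f} the source $\mu^{\phi}$ has the explicit density $f_{\phi}$ of \eqref{eq:fphi}, and for $\phi\in C_{l,\a}$ with $\gamma l<(T\sigma^{2})^{-1}$ one checks $(-\log f_{\phi})''\in[(\sigma^{2}T)^{-1}-\gamma l,\,(\sigma^{2}T)^{-1}]$, so both $\mu^{\phi}$ and $\nu$ are log-concave and Brenier's theorem gives existence and uniqueness of $\Psi^{\phi}$ once normalized. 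A fixed point of $\Phi$ in $C_{l,\a}$ is exactly the $\phi^{*}$ claimed.

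\textbf{Step 1 (self-map via a Caffarelli-type estimate).} Take $l=2/\sqrt{\kappa\sigma^{2}T}$. In dimension one, $g:=(\Psi^{\phi})'$ satisfies the Monge--Amp\`{e}re identity $g'(y)=f_{\phi}(y)/p_{\nu}(g(y))$. A standard maximum-principle computation on $\log g'$, exploiting $(-\log p_{\nu})''\geq\kappa$ and $(-\log f_{\phi})''\geq(\sigma^{2}T)^{-1}-\gamma\phi''$, shows that at any interior maximum $y^{*}$ of $g'$,
\[
\kappa\bigl(g'(y^{*})\bigr)^{2}\leq(\sigma^{2}T)^{-1}-\gamma\phi''(y^{*})\leq(\sigma^{2}T)^{-1},
\]
hence $(\Psi^{\phi})''(y^{*})\leq 1/\sqrt{\kappa\sigma^{2}T}$. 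Controlling $g'$ at infinity through the Gaussian decay of $f_{\phi}$ and the strong log-concavity of $\nu$ upgrades this to $\sup_{y}(\Psi^{\phi})''\leq 1/\sqrt{\kappa\sigma^{2}T}$; the factor $2$ in $l$ is slack absorbing the limit correction and the dependence on $\gamma\phi''$. Convexity of $\Psi^{\phi}$ gives $(\Psi^{\phi})''\geq 0$, and for $\gamma_{0}$ small one has $2\gamma T\sigma^{2}<\sqrt{\kappa\sigma^{2}T}$, so $l<1/(T\sigma^{2}\gamma)$ and the output lies in $C_{l,\a}$. Under Assumption~\ref{assum:main}~(ii) the same argument applies on the interior of the support of $\nu$ using $\inf\{p_{\nu}(x):p_{\nu}(x)>0\}>0$, producing the corresponding $l$.

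\textbf{Step 2 (compactness and continuity).} The $C^{3}$ smoothness of $p_{\nu}$ and $f_{\phi}$ combined with the 1D Monge--Amp\`{e}re equation yield uniform local $C^{4+\a}$ estimates on $\Psi^{\phi}$, so $\Phi(C_{l,\a})$ is relatively compact in the Fr\'{e}chet topology of $C^{4}_{loc}$. For continuity of $\Phi$ I would establish, in order: (a) continuous dependence of $P^{\phi}$ on $\phi$ via Lemma~\ref{lem:pdep} and the new stochastic representation for the quasilinear pricing PDE advertised in the introduction; (b) continuous dependence of $\chi^{\phi}$, $\Gamma^{\phi}$ and hence of $f_{\phi}$ through the explicit formulas \eqref{eq:defzg}--\eqref{eq:fphi}; (c) continuous dependence of the 1D Brenier map $F_{\nu}^{-1}\circ F_{\mu^{\phi}}$ on $f_{\phi}$ under locally uniform convergence of densities, together with integration pinned at $0$ to recover $\Psi^{\phi}$.

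\textbf{Step 3 (Schauder and main obstacle).} With $\Phi$ a continuous self-map of the convex set $C_{l,\a}$ whose image is relatively compact in $C^{4}_{loc}$, the Schauder--Tychonoff theorem produces a fixed point $\phi^{*}=\Psi^{\phi^{*}}$, which is the desired Brenier potential pushing $\mu^{\phi^{*}}$ onto $\nu$. The hard part is Step~1: the maximum-principle bound is clean at an interior maximum but requires a careful tail analysis to cover the case where $\sup(\Psi^{\phi})''$ is only attained in the limit $y\to\pm\infty$, while the resulting constant must be kept strictly below $1/(T\sigma^{2}\gamma)$, which is what forces the size of $\gamma_{0}$. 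The secondary difficulty is Step~2~(a), which seems to genuinely require the novel stochastic representation for the quasilinear pricing PDE flagged in the introduction.
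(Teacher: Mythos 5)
Your skeleton coincides with the paper's: a Schauder fixed point on the class with $l=\frac{2}{\sqrt{\kappa\sigma^2 T}}$, a Caffarelli-type second-derivative bound to get a self-map (your Step~1 maximum-principle computation on $\log g'$ is essentially a by-hand proof of the estimate the paper simply imports from \cite[Corollary 6.1]{ko}, which is how the paper sidesteps the tail analysis you flag), then continuity and compactness. The genuine gap is precisely the step you defer. You route the continuity of $\phi\mapsto f_\phi$ (hence of $\Phi$) through ``continuous dependence of $P^\phi$ on $\phi$'' to be supplied by a representation ``advertised in the introduction,'' but you never produce it, and the paper states explicitly that the available stability theory for the quasilinear equation \eqref{eq:pdep} (or \eqref{eq:pder2}) is \emph{not} strong enough for this purpose. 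The paper's resolution is different and more economical: by Proposition \ref{prop:f}, $f_\phi$ depends on $\phi$ only through $\pa_{\xi\xi}\phi$ (explicitly, via \eqref{eq:fphi2}) and the two scalars $\chi^\phi(0,0)$ and $\Gamma^\phi(0,0)$; Lemma \ref{lem:rep} then identifies these scalars variationally, $\chi^{\tilde g}(0,0)=\argmin_{z\in\R}\E\big[e^{\frac{z^2}{2\sigma^2 T}+\gamma\tilde g(z+\sigma B_T)}\big]$ together with the corresponding formula for $\Gamma^{\tilde g}(0,0)$, and their continuity and uniform boundedness follow from uniform strong convexity of this functional and dominated convergence --- no continuous dependence of the full PDE solution $P^\phi$ is needed or used. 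Without this lemma (or an equivalent substitute), the operator $\Phi$ has no proven continuity and Schauder cannot be invoked, so your argument is incomplete at its central point.

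A second defect is the compactness claim. Membership in $C_{l,\a}$ only imposes $0\le\pa_{\xi\xi}\phi\le l$ and $C^{4+\a}_{loc}$ smoothness with no quantitative control, so uniform local $C^{4+\a}$ estimates on $\Psi^\phi$ do not follow: the H\"older seminorm of $(\Psi^\phi)^{(4)}$ requires a uniform H\"older modulus on $f_\phi''$, hence on $\pa_{\xi\xi}\phi$, which is unavailable over the class; consequently relative compactness of $\Phi(C_{l,\a})$ in $C^4_{loc}$ is unjustified, and in any weaker topology where compactness does hold, $C_{l,\a}$ is no longer closed, so the limit need not lie in your set. The paper avoids both problems by running the fixed point at the level of derivatives, in the closed convex set $\cX_l=\{g\in Lip(\R):0\le g'\le l\}$ equipped with the weak norm $|g(0)|+|g'|_{L^1(N(0,1))}$, obtaining compactness by Arzel\`a--Ascoli applied to $M_{g_n}'=f_{\tilde g_n}/p_\nu(M_{g_n})$ (which needs only first-order information on $f_{\tilde g_n}$, controlled through the bounds of Lemma \ref{lem:rep}), and only \emph{afterwards} bootstrapping the fixed-point identity $g=F_\nu^{-1}(F_{\tilde g})$ to recover the $C^{4+\a}_{loc}$ regularity needed for $\phi^*\in C_{l,\a}$; the same structure handles Assumption \ref{assum:main}~(ii) once $M_g'$ is bounded using the compact support and $\inf\{p_\nu(x):p_\nu(x)>0\}$. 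If you replace your ambient space by such a weak one and supply the variational continuity lemma, your proof closes; as written, both the continuity and the compactness steps are gaps.
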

\begin{proof}
The proof of the theorem is provided in Subsection \ref{ss:prooffixed}.
\end{proof}

We make the convention that quantities with superscript $*$ are associated to $\phi^*$. 

The fixed point condition can in fact be stated as a functional equality. In dimension one the Brenier map can be explicitly written via the cumulative functions. Since $\mu^\phi$ has density $f_\phi$ defined in \eqref{eq:fphi},  denote $F_\phi(x)=\int_{-\infty}^x f_\phi(y)dy$. Then, as stated in Appendix \ref{s.optt}, the fixed point condition is to find $\phi^*$ satisfying for $\xi\in \R$, 
\begin{align}\label{eq:opttcon}
\pa_\xi \phi^*(\xi)=F^{-1}_\nu(F_{\phi^*}(\xi)).
\end{align}

We also note that one can state this fixed point condition in multidimension. 
In this case, the Brenier map is not explicit an one needs to solve the Monge-Ampere equation associated to transport of $ f_\phi$ to $p_\nu$, which is to find a convex function $\phi^*$ satisfying 
$$\det (\pa_{\xi\xi}\phi^*)=\frac{f_{\phi^*}}{p_\nu(\pa_\xi\phi^*)}.$$
We choose to exclude the multidimensional problem from our study to avoid increasing the length of the paper. However, we expect our methods to carry over to such multidimensional cases. 

 \section{Main Results}\label{s.mr}
We now use the fixed point $\phi^*$ constructed in Theorem \ref{thm:fixed} to establish an equilibrium. Recall that $G^{\phi^*}$ denotes the transition density of $\xi^{0,*}:=\xi^{0,\phi^*}$, $(\pa_\xi \phi^*)^{-1}$ denotes the inverse mapping of $ \pa_\xi\phi^*$ that is defined on the support of $\nu$, and define the function 
\begin{align}\label{eq:insider}
\theta^{\tilde v}(t,\xi)&=\frac{\sigma^2}{\pa_\xi\chi (t,\xi)}\frac{\pa_\xi G(t,\xi,T,( \pa_\xi\phi^*)^{-1}(\tilde v))}{G(t,\xi,T,( \pa_\xi\phi^*)^{-1}(\tilde v))}\notag\\
&=-\gamma\sigma^2 P(t,\xi)-\frac{\chi(t,\xi)-( \pa_\xi\phi^*)^{-1}(\tilde v)}{T-t}=\frac{( \pa_\xi\phi^*)^{-1}(\tilde v)-\xi}{T-t}
\end{align}
where we have used \eqref{eq:defzg} and \eqref{eq:densityG} to simplify the expression. If the informed trader uses the strategy $dX^*_t=\theta^{\tilde v}(t,\xi_t)dt$, 
then, 
\begin{align}\label{eq:dxi2}
d\xi_t=\frac{\sigma^2}{(\pa_\xi\chi (t,\xi_t))^2}\frac{\pa_\xi G(t,\xi_t,T,( \pa_\xi\phi^*)^{-1}(\tilde v))}{G(t,\xi_t,T,( \pa_\xi\phi^*)^{-1}(\tilde v))}dt+\frac{\sigma}{\pa_\xi\chi(t,\xi_t)} dB_t.
\end{align}
Thus, $\theta^{\tilde v}$ is the usual Doob's h-transform drift (of Y) to move $\xi$ to $P^{-1}(T,\tilde v)=(\pa_\xi\phi^*)^{-1}(\tilde v).$
Additionally, thanks to the Lemma \ref{lem:measurability}, $\xi$ is a $C^{1,2}(\Lambda)$ functional of the paths of $Y$. 
Denote by $H^*$ the mapping
\begin{align}\label{def:ps} 
(t,y_\cdot)\in \Lambda \mapsto H^*(t,y_\cdot):= P^{*}(t,\xi_t(y_\cdot)):=P^{\phi^*}(t,\xi_t(y_\cdot))
\end{align}
where $P^{\phi^*}$ is the solution of \eqref{eq:pdep} with final condition $\pa_\xi\phi^*$.  
We are now ready to state the main theorem regarding the existence of equilibrium. 
\begin{thm}\label{thm:eq}
{Under Assumption \ref{assum:main}, there exists $\gamma_1\in (0,\gamma_0]$ where $\gamma_0$ is as in Theorem \ref{thm:fixed} so that for all  $\gamma \in (0,\gamma_1)$, the pricing rule 
\begin{align}\label{eq:eqprice}
(t,y_\cdot)\in \Lambda \mapsto H^{*}(t,y_\cdot)
\end{align} for the market maker and 
the trading strategy 
\begin{align}\label{eq:opt}
dX^*_t= \theta^{\tilde v}(t,\xi_t)dt
\end{align}
for the informed trader form an equilibrium.

Additionally, denoting $\phi^c$ is the convex conjugate of $\phi^*$, at this equilibrium, the expected utility of the informed trader is 
\begin{align}\label{eq:utii}
-e^{\frac{\tilde v^2\gamma^2\sigma^2T}{2}+\gamma(\tilde v \chi^*(0,0)-\Gamma^*(0,0))-\gamma\phi^c(\tilde v)}
\end{align}
and the $\cF^m$ distribution of $\xi$ is the distribution of $\xi^0$. 
}
\end{thm}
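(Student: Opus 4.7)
The plan is to verify separately the two defining conditions of equilibrium and then to read off both the utility formula \eqref{eq:utii} and the $\cF^m$-law of $\xi$ as by-products.

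\textbf{Market efficiency.} I would first observe that the SDE \eqref{eq:dxi2} exhibits $\xi$, under the strategy $X^*$, as the Doob $h$-transform of $\xi^{0,*}$ conditioned to hit $(\pa_\xi \phi^*)^{-1}(\tilde v)$ at time $T$; in particular $\xi_T = (\pa_\xi \phi^*)^{-1}(\tilde v)$ almost surely, so $P_T = \pa_\xi \phi^*(\xi_T) = \tilde v$. The fixed-point identity of Theorem \ref{thm:fixed}, namely that $\pa_\xi \phi^*$ pushes $\mu^{\phi^*}$ forward to $\nu$, is exactly the consistency condition under which the mixture over $\tilde v \sim \nu$ of these $h$-transforms coincides in law with the unconditioned $\xi^{0,*}$. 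Executing this ``bridge mixing'' argument via Bayes' rule with the transition density $G^{\phi^*}$ supplied by Proposition \ref{prop:f} yields the last claim of the theorem, namely that the $\cF^m$-distribution of $\xi$ is that of $\xi^{0,*}$. The martingality of $P_t=P^{*}(t,\xi^{0,*}_t)$ under $\cF^m$ then follows from It\^o's formula, since the generator of $\xi^{0,*}$ is $\frac{\sigma^2}{2(\pa_\xi \chi^*)^2}\pa_{\xi\xi}$ and, combined with $\pa_\xi \chi^* = 1 - \gamma\sigma^2(T-t)\pa_\xi P^*$, equation \eqref{eq:pdep} is precisely the Kolmogorov equation that kills the drift of $P^{*}(t,\xi^{0,*}_t)$. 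Integrability required to promote the local martingale to a true one is secured by the uniform Lipschitz bound \eqref{eq:lip}.

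\textbf{Optimality of $X^*$.} For an arbitrary $X \in \cA(H^*)$, I apply It\^o's formula to $\Gamma^*(t,\xi_t) - \tilde v\, \chi^*(t,\xi_t)$, substituting the PDE system \eqref{eq:system}--\eqref{eq:system2} and the dynamics \eqref{eq:dxi}. Combined with the terminal conditions $\Gamma^*(T,\xi)=\phi^*(\xi)$, $\chi^*(T,\xi)=\xi$ and with the wealth expression $W_T = \int_0^T(\tilde v-P_t)\,dX_t - \ab{X,P}_T$, this yields the exact identity
\begin{align*}
-\gamma W_T = \gamma\bigl[\phi^*(\xi_T) - \tilde v\, \xi_T\bigr] - \gamma\bigl[\Gamma^*(0,0) - \tilde v\, \chi^*(0,0)\bigr] + \tfrac{\gamma^2 \sigma^2 T \tilde v^2}{2} + \log Z_T,
\end{align*}
where $Z_T$ is the Dol\'eans exponential appearing in \eqref{eq:adi}. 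Exponentiating, taking expectation, and using $\E[Z_T]=1$ together with the convex duality bound $\phi^*(\xi_T) - \tilde v \xi_T \geq -\phi^c(\tilde v)$, one gets the upper bound
\begin{align*}
\E\bigl[-\gamma e^{-\gamma W_T}\bigr] \leq -\gamma\exp\!\left(\tfrac{\tilde v^2 \gamma^2 \sigma^2 T}{2} + \gamma\bigl[\tilde v \chi^*(0,0) - \Gamma^*(0,0)\bigr] - \gamma\phi^c(\tilde v)\right),
\end{align*}
with equality iff $\xi_T = (\pa_\xi \phi^*)^{-1}(\tilde v)$ almost surely. The strategy $X^*$ forces precisely this terminal value by construction, hence it attains the supremum and the expected utility equals \eqref{eq:utii}.

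\textbf{Main obstacle.} The hardest step will be reducing a general admissible semimartingale $X \in \cA(H^*)$ to the absolutely-continuous case underlying the computation above. A nontrivial martingale part of $X$ contributes $\ab{X,P}_T$ and also introduces the correction $d\ab{Y}-d\ab{Z}$ into \eqref{eq:dxi}, producing extra terms in the It\^o expansion which must be shown to combine into a strictly positive penalty. This argument, in the spirit of \cite{cd}, relies on the strict monotonicities $\pa_\xi P^*>0$ and $\pa_\xi \chi^*>0$ from \eqref{eq:lip} and confirms that diffusive strategies are strictly suboptimal. A secondary but delicate point is uniform integrability of $e^{\gamma[\phi^*(\xi_T) - \tilde v \xi_T]} Z_T$ over $\cA(H^*)$; this is where the smallness threshold $\gamma_1 \leq \gamma_0$ enters, via the Lipschitz bounds of Lemma \ref{lem:pdep} and the Gaussian-type tail control of $f_{\phi^*}$ obtained from Caffarelli's contraction theorem inside Theorem \ref{thm:fixed}.
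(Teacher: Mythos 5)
Your route is essentially the paper's own: rationality of $H^*$ via the Doob $h$-transform/bridge-mixing argument (the paper delegates the mixing step to \cite[Theorem 2.1]{ccd} rather than redoing the Bayes computation with $G^{\phi^*}$, but the content is the same) together with the observation that \eqref{eq:pdep} kills the drift of $P^*(t,\xi^{0,*}_t)$; and optimality via It\^o applied to $\tilde v\,\chi^*(t,\xi_t)-\Gamma^*(t,\xi_t)$ with the system \eqref{eq:system}--\eqref{eq:system2}, the Fenchel--Young bound $\tilde v\,\xi_T-\phi^*(\xi_T)\le\phi^c(\tilde v)$, and attainment because the bridge forces $\xi_T=(\pa_\xi\phi^*)^{-1}(\tilde v)$. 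One remark on what you single out as the ``main obstacle'': in the paper there is no separate reduction to absolutely continuous strategies. The single It\^o expansion is carried out for a general admissible $dX_t=\theta_t dt+\a_t dB_t$, and the diffusive part shows up exactly as the additional term $\int_0^T\frac{\gamma\a_t^2\,\pa_\xi P^*(t,\xi_t)}{2\,\pa_\xi\chi^*(t,\xi_t)}\,dt$ inside the exponential, whose positivity (from \eqref{eq:lip}) immediately makes any martingale component suboptimal; so your displayed pathwise identity is only valid for $\a\equiv 0$ and should simply carry this extra term, after which the upper bound goes through with no further work.

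The one genuine omission is the verification that the candidate strategy is admissible, $X^*\in\cA(H^*)$, i.e.\ that the Dol\'eans exponential in \eqref{eq:adi} is a true martingale along $X^*$ --- and this, not uniform integrability, is the actual reason the theorem allows $\gamma_1$ to be strictly smaller than $\gamma_0$. Your ``secondary delicate point'' is misplaced: for an arbitrary $X\in\cA(H^*)$ no uniform integrability over the class is needed, since the martingale property of $Z$ is part of the definition of admissibility, so conditioning on $\cF_0$ and bounding the exponent from below already gives the upper bound. What does require an argument is Novikov's condition \eqref{eq:nov} for $X^*$ itself: the paper uses the already-established rationality to write $H^*(t,X^*_\cdot+Z_\cdot)=\E[\tilde v\,|\,\cF^m_t]$, applies Jensen to bound the $n$-th moment of $\frac{\gamma^2\sigma^2}{2}\int_0^T(\tilde v-P_t)^2dt$ by $(2T\gamma^2\sigma^2)^n\E[|\tilde v|^{2n}]$, and then sums the series using either the bounded support of $\nu$ (Assumption ii) or the Gaussian-type moment growth $\E[|\tilde v|^{2n}]\le C\,n!\,4^n\kappa^{-n}$ coming from $\kappa$-strong log-concavity of $\nu$ (Assumption i); convergence forces $8T\gamma^2\sigma^2/\kappa<1$, which is where $\gamma_1$ enters. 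So the smallness threshold comes from the tails of $\nu$, not from the Lipschitz bounds of Lemma \ref{lem:pdep} or tail control of $f_{\phi^*}$ as you suggest. With that step added (and the $\a_t^2$ penalty kept in the identity), your proof matches the paper's.
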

\begin{proof}
The proof of the main theorem will be provided in Sections \ref{s:insider} and \ref{s:mm} by showing the rationality of the price \eqref{eq:eqprice} and optimality of the strategy \eqref{eq:opt} separately. 
\end{proof}

\subsection{Properties of the equilibrium}\label{ss.prop}

Similarly to \cite{kyle,ba,c}, we denote $\lambda(t,\xi)$, the price impact or Kyle's Lambda which is the sensitivity of the increments of the price in the increments of $Y$.
Due to \eqref{eq:pdep}, if the strategy of the informed trader is absolutely continuous, we have that 
$$dP^*(t,\xi_t)=\pa_\xi P^*(t,\xi_t)d\xi_t=\frac{\pa_\xi P^*(t,\xi_t)}{1-\gamma\sigma^2 (T-t)\pa_\xi P^*(t,\xi_t)}dY_t.$$
Therefore, the price impact has the expression, $\lambda(t,\xi)=\frac{\pa_\xi P^*(t,\xi)}{1-\gamma\sigma^2 (T-t)\pa_\xi P^*(t,\xi)}$. 
In \cite{c}, the author also obtains an expression for this price impact when $\nu$ is Gaussian and using their notations $\pa_\xi P^*$ is a constant denoted $\lambda^*(1)$ in \cite[Proposition 3]{c}. 
In our framework $\pa_\xi P^*$ is not constant anymore and in fact in the Appendix we show that, for any absolutely continuous strategy of the informed trader, the price impact has the dynamics
\begin{align}
d\lambda(t,\xi_t)&=-\gamma\sigma^2 \lambda^2(t,\xi_t) dt + \frac{\pa_{\xi\xi} P^*(t,\xi_t)}{(\pa_\xi\chi)^3(t,\xi_t)} dY_t
\end{align}
{Since $\xi$ is a $\cF^m$-martingale, the $\cF^m_t$ conditional expectation of the drift in \eqref{eq:dxi2} is $0$ and $Y$ is a martingale under the same filtration}. Thus, the price impact is a supermartingale. The intuition behind this is the fact that close to maturity the market maker expects that the informed trader has presumably large position. Therefore, being risk averse she will be less inclined to take larger positions and will be using less of her informational advantage close to maturity. Thus, there is less adverse selection risk towards the end, hence the market makers do not need to collect excessive rents to compensate. 

Additionally, the market depth $\zeta(t,\xi_t)=\frac{1}{\lambda(t,\xi_t)}$ satisfies 
\begin{align}\label{eq:depthd}
d\zeta(t,\xi_t)=&\bigg[\gamma\sigma^2+\frac{1}{(\pa_\xi P^*)^2(t,\xi_t)}.\frac{\gamma\sigma^4(T-t)(\pa_{\xi\xi}P^*)^2(t,\xi_t)}{(\pa_\xi \chi)^3(t,\xi_t)}+\frac{\sigma^2(\pa_{\xi\xi}P^*)^2(t,\xi_t)}{(\pa_\xi\chi)^2(t,\xi_t)(\pa_\xi P^*)^3(t,\xi_t)}\bigg]dt\notag\\ -&\Big[\frac{\pa_{\xi\xi}P^*(t,\xi_t)}{(\pa_\xi P^*)^2(t,\xi_t)\pa_\xi\chi (t,\xi_t)}\Big] dY_t
\end{align}
and is a submartingale. The property means that there will be more liquidity close to maturity.

\subsection{Comparison with \cite{c}}\label{s.cho}
In \cite{c}, the author also studies the Kyle-Back model where $\nu$ is Gaussian. In this case, relying on the linear quadratic structure, we can explicitly solve \eqref{eq:pdep}. Using the notations of \cite{c}(up to changing the sign of $\gamma$ and $T=1$), we conjecture the form 
$P^*(t,\xi)=\lambda \xi+m$ for the solution of \eqref{eq:pdep} for some $\lambda$ and $m$ to be determined.
The fixed point condition in particular imposes that 
$$var\left(\lambda \sigma \int_0^1 \frac{dB_t}{ (1-\gamma\sigma^2\lambda(1-t))}\right)=var(\tilde v)=\Sigma^2\mbox{ and }m=\E\tilde v.$$
We can solve for $\lambda$ and obtain $\lambda=-\frac{\gamma\Sigma^2}{2}+\sqrt{\frac{\gamma^2\Sigma^4}{4}+\frac{\Sigma^2}{\sigma^2}}$ which is $\lambda^*(1)$ mentioned in Subsection \ref{ss.prop}. 
Thus, unsurprisingly, we find the equilibrium in \cite{c} using our fixed point condition in Theorem \ref{thm:fixed}.

We can also compute $\Gamma$ and $\chi$ as
$$\Gamma(t,\xi)=\lambda (1-\gamma\sigma^2\lambda(1-t))\frac{\xi^2}{2}+f_t,\mbox{ and }\chi(t,\xi)= (1-\gamma\sigma^2\lambda(1-t))\xi$$
where $f'(t)=-\frac{\sigma^2\lambda}{2( 1-\gamma\sigma^2\lambda(1-t))}$.

Injecting in \eqref{eq:expansionutility}, we obtain we obtain the expected utility of the informed trader 
$$\sup_X\E\left[-\exp(-\gamma W_T)\right]= -\exp\left({\frac{\tilde v^2\gamma^2\sigma^2T}{2}+\gamma(\tilde v \chi(0,0)-\Gamma(0,0))-\gamma\frac{(\tilde v-m)^2}{2\lambda}}\right).$$
In this case, the dynamics of the price is given by 
$$dP^*(t,\xi_t)=\pa_\xi P^*(t,\xi_t)d\xi_t=\frac{\lambda }{1-\gamma\sigma^2 (1-t)\lambda}dY_t.$$
Thus, the market depth satisfies 
$$d\left(\frac{ 1-\gamma\sigma^2 (1-t)\lambda}{\lambda}\right)=\gamma \sigma^2dt.$$
Note that the growth rate of market depth in the general case of \eqref{eq:depthd}
is always greater or equal than the Gaussian case studied in \cite{c}. In this sense, the stochasticity of the price impact introduced due to the lack of Gaussianity yields to a larger market depth close to maturity. 
\subsection{Risk neutral case}
In this case, trivially, $\chi(t,\xi)=\xi$ and \eqref{eq:system} becomes the classical pricing PDE whose final condition is the Brenier map pushing $\mu^\phi$ onto $\nu$. Note that in this case, $\xi_t=Y_t$ for all $t\in [0,T]$ and $\mu^\phi$ does not depend on $\phi$ and is the Gaussian distribution of $\sigma B_T$. Thus, the fixed point is trivial and the Brenier map in question is just $\pa_\xi\phi(x)=F_\nu^{-1}(N(\frac{x}{\sigma}))$. Then, the equation \eqref{eq:pdep} simplifies to the heat equation. We obtain the equilibrium in the seminal papers \cite{kyle,ba}. \cite{back1993,cel,bou} studied the same problem with multidimensional $Z$ and even non absolutely continuous $\nu$. 
\subsection{The structure of path-dependence}
{In \cite{cn2}, the authors also study the Kyle-Back model and provide necessary and sufficient conditions on path-dependent functionals for the existence of equilibrium. Then, they use these results in \cite{cn} to establish the existence of equilibrium in some cases. Rather than giving necessary and sufficient conditions on its existence, we directly construct an equilibrium. One can then check that the equilibrium constructed here satisfies the assumptions in \cite{cn2,cn} and our pricing rules satisfy the necessary conditions provided in \cite{cn}. Indeed, using the notations of \cite{cn} (up to changing the sign of the risk aversion parameter),  the authors show that if the path-dependence of the pricing rule on $Y$ has the form 
$$P_t=H(t,\xi_t)\mbox{ with }\xi_t=\int_0^t \lambda(s,P_s)dY_s,$$
then, $H$ and $\lambda$ have to solve
\begin{align}
 \pa_tH+\frac{\sigma^2\lambda^2\pa_{\xi\xi}H}{2}&=0\label{blabla1}\\
 \frac{\pa_t\lambda}{\lambda^2}+\frac{\sigma^2(\pa_{\xi}H)^2\pa_{pp}\lambda}{2}&=-\gamma \sigma^2 \pa_{\xi}H\label{blabla2}.
\end{align}
Note that in our formulation, $H(t,\xi)=P^*(t,\xi)$. 
Additionally, by \eqref{eq:system2}, we have 
$$\pa_\xi\left (-\pa_t \chi+\frac{\sigma^2}{2}\pa_\xi \frac{1}{\pa_\xi \chi}\right)=-\gamma\sigma^2 \pa_\xi P$$
which implies that 
$$\frac{\pa_t \frac{1}{\pa_{\xi} \chi}}{\frac{1}{(\pa_{\xi} \chi)^2}}+\frac{\sigma^2}{2}\pa_{\xi\xi} \frac{1}{\pa_\xi \chi}=-\gamma\sigma^2 \pa_\xi P.$$
We can now use this equality combined with \eqref{eq:pdep} to show that the function 
$\lambda(t,p)=\frac{1}{\pa_\xi \chi(t,(P^*)^{-1}(t,p))}$
satisfies \eqref{blabla2} and in the equilibrium we construct, the structure of path-dependence of the pricing rule in $Y$ satisfies the assumptions in \cite{cn}.

Note that our main contribution is not to exhibit particular solutions to the system \eqref{blabla1}-\eqref{blabla2} but to use optimal transport theory to find the correct final condition $P^*(T,\cdot)$ and pinpoint appropriate solutions of this system so that the pricing rule of the market maker and the strategy of the informed trader satisfy additional conditions stated in \cite{cn} such as  $P_T=\tilde v$ and $\cF^m$-martingality of $Y$. 
}

\subsection{Numerical method to compute the equilibrium}\label{s.numerical}
In this subsection, we describe a numerical method we use to compute an approximate fixed point. Although in our numerical experiments the algorithm converges, the proof of this convergence is out of the scope of this paper. 

Since the Brenier map in one dimension is explicit, the fixed point condition on $\phi^*$ is
\begin{align}\label{eq:fixcond2}
\pa_\xi\phi^*(\xi)=F_\nu^{-1}(F_{\phi^*} (\xi)).
\end{align}

We start a fixed point iteration with $\pa_\xi\phi^0=0$ and we truncate $\pa_\xi\phi^*$ for large values of $\xi$.  

Assuming $\pa_\xi\phi^{n}$ is defined, thanks to Proposition \ref{prop:f}, we can compute 
$f_{\phi^n}$ and therefore $F_{\phi^n}$. For this computation we need to compute $\Gamma^{\phi^n}$ and $\chi^{\phi^n}$ which can be done via an accelerated gradient descent algorithm as described in Lemma \ref{lem:rep}. Given $F_{\phi^n}$, we now use the identity \eqref{eq:fixcond2} to define $\pa_\xi\phi^{n+1}$ via 
\begin{align}\label{eq:fixcond3}
\pa_\xi\phi^{n+1} (\xi)=F_\nu^{-1}(F_{\phi^{n}} (\xi)).
\end{align}
 We then iterate over $n$. This algorithm converges for the examples presented below and the limit is $\pa_\xi\phi^*$ where $\phi^*$ is a fixed point whose existence is proven in Theorem \ref{thm:fixed}. 
 
 Given the value of $\phi^*$, we can solve both the backward equations \eqref{eq:system}-\eqref{eq:system2} to obtain $\Gamma^{*}:=\Gamma^{\phi^*}$ and $\chi^*:=\chi^{\phi^*}$
 and the Fokker-Planck equation associated to $\xi^{0,*}=\xi^{0,\phi^*}$ to obtain $G$ in Proposition \ref{prop:f} as 
 \begin{align*}
G^*(t,\xi,T,y):=G^{\phi^*}(t,\xi,T,y)=\frac{\exp\left(\gamma \phi^*(y)-\gamma\Gamma^*(t,\xi)-\frac{|y-\chi^*(t,\xi)|^2}{2\sigma^2(T-t)}\right).
}{\sqrt{2\pi \sigma^2 (T-t)}}.
\end{align*}
Given the fact that $P(T,\xi_T)=\pa_\xi\phi^*(\xi_T)$, the distribution of $\tilde v$ conditional to the state $t\in (0,T)$ and $\xi_t=\xi$ is the pushforward of $G^*(t,\xi,T,y)dy$ via the map $\pa_\xi\phi^*$. This pushforward measure admits the density
\begin{align}\label{eq:densityprice}
G^*(t,\xi,T,(\pa_\xi\phi^*)^{-1}(y))\pa_\xi((\pa_\xi\phi^*)^{-1})(y)\1_{supp (\nu)}(y).
\end{align}

\subsubsection{Numerical results: $\nu$ Normal }
As discussed in Subsection \ref{s.cho}, if $\nu$ is normal, $\pa_\xi\phi$ is a linear function and therefore $\phi$ is quadratic. Since we have the same fixed point as in \cite{c}, the fixed point iteration in Subsection \ref{s.numerical} converges to a linear function $P^*(T,\xi)=\pa_\xi\phi^*(\xi)=\lambda^*(1)\xi$ described in \cite{c}(see Figure \ref{fig:1}).

\begin{figure}[hbt!]
\includegraphics[width=0.7\textwidth]{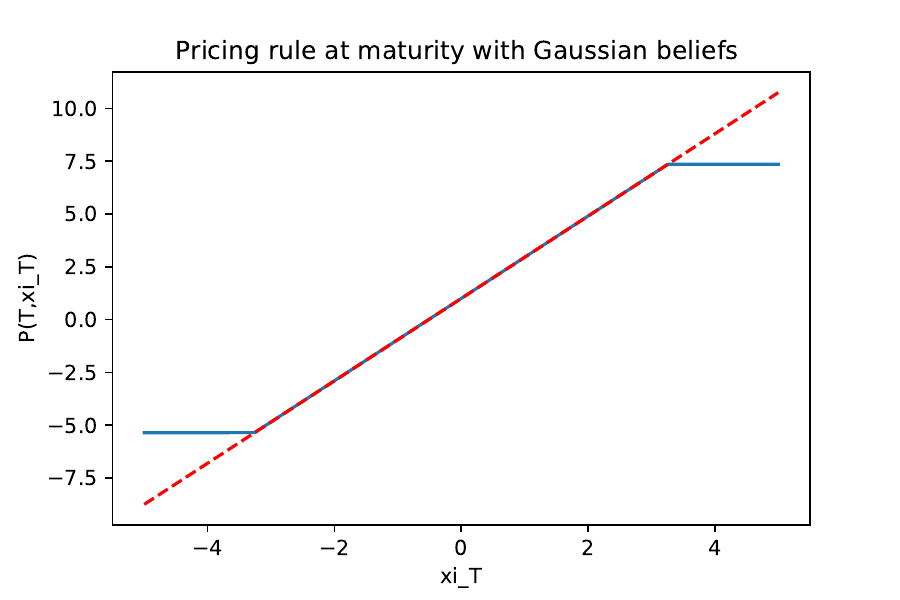}
\caption{The blue curve (truncated for large values of $\xi_T$) is computed with the fixed point iteration described in Subsection \ref{s.numerical} with parameters $(T,\sigma,\gamma)=(1,.5,.1)$ and $\nu\sim N(1,1)$. The red curve is obtained by the explicit formula of \cite{c}}
\label{fig:1}
\end{figure}

\subsubsection{Numerical results: $\nu$ uniform and {log-normal}}
In Figure \ref{fig:test}, we have the results of fixed point iteration for uniform and {log-normal} $\nu$.  As mentioned the standing Assumption \eqref{assum:main} is not satisfied for the {log-normal} distribution. However, our algorithm still converges for this distribution. The convergence of the numerical method suggests that the existence of the fixed point can be established for more general cases, but this point is out of the scope of our current work.
\begin{figure}[hbt!]
\centering
\begin{subfigure}{.5\textwidth}
  \centering
\includegraphics[width=1\textwidth]{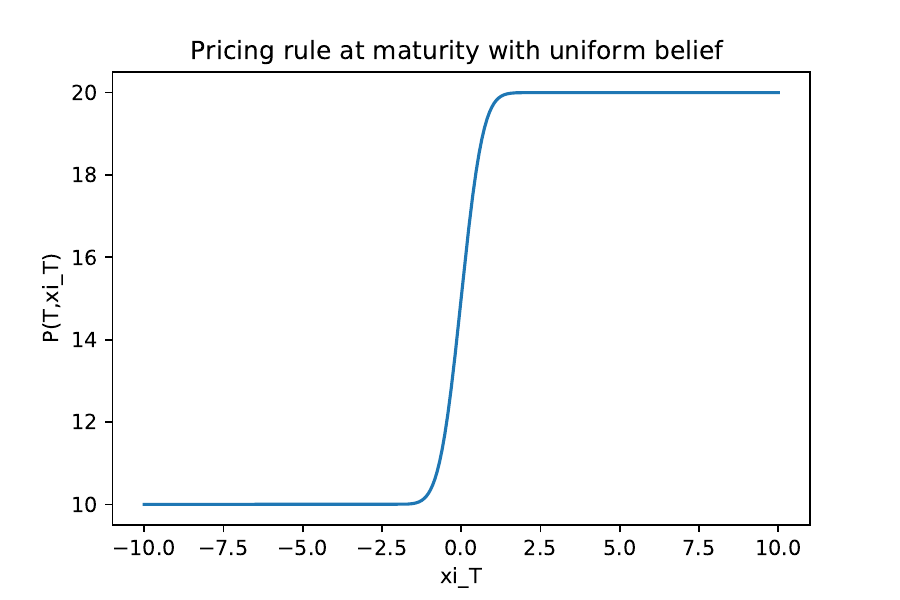}
  \caption{ $\nu\sim Unif(10,20)$}
  \label{fig:sub1}
\end{subfigure}%
\begin{subfigure}{.5\textwidth}
  \centering
\includegraphics[width=1\textwidth]{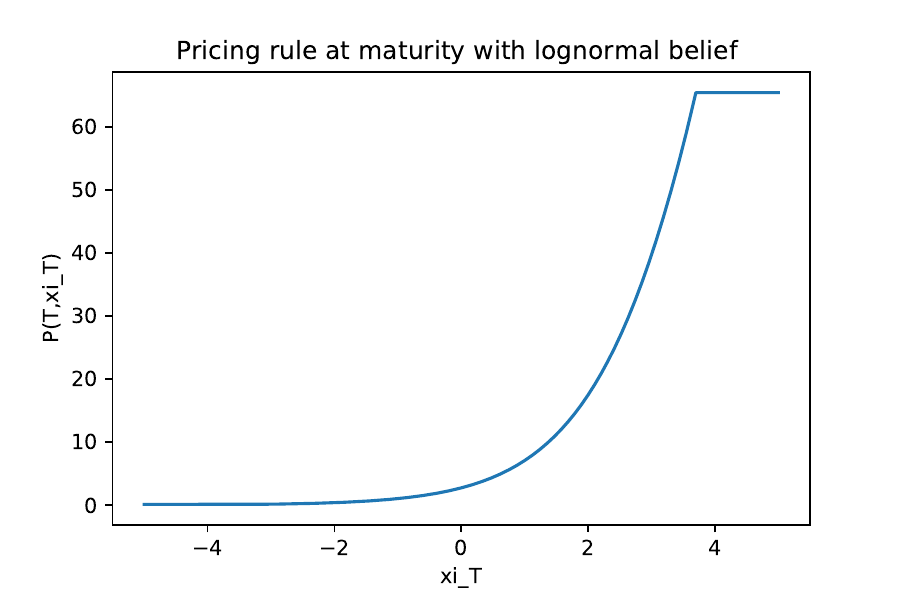}
  \caption{$\nu\sim exp(N(1,.5))$}
  \label{fig:sub2}
\end{subfigure}
\caption{Numerical computation of the fixed point for $(T,\sigma,\gamma)=(1,.5,.1)$}
\label{fig:test}
\end{figure}
\begin{figure}[hbt!]
\centering
\begin{subfigure}{.5\textwidth}
  \centering
\includegraphics[width=7.5cm, height=4.8cm]{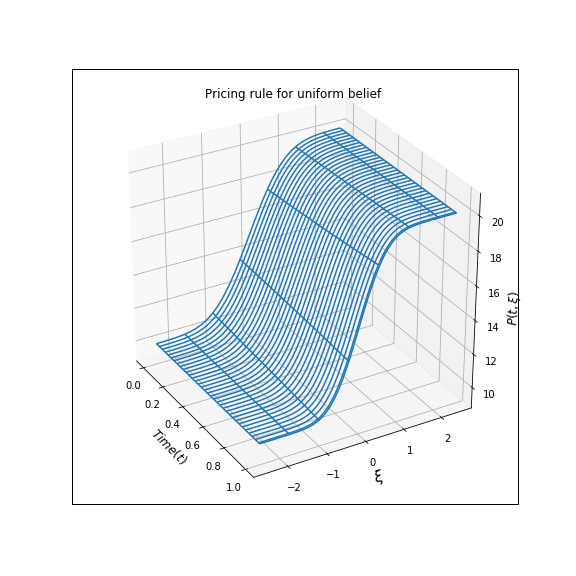}
  \label{fig:sub21}
\end{subfigure}%
\begin{subfigure}{.5\textwidth}
  \centering
\includegraphics[width=1\textwidth]{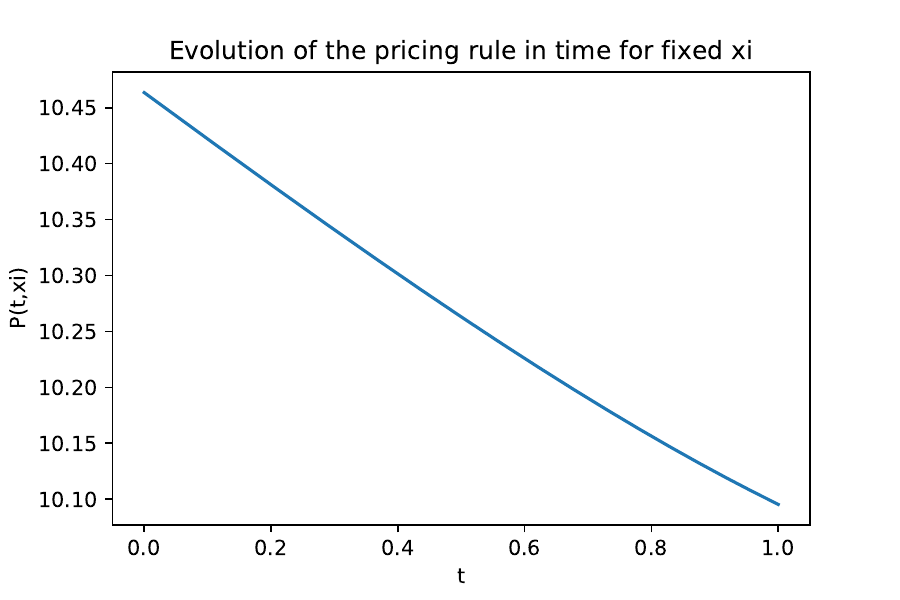}
  \label{fig:sub22}
\end{subfigure}
\caption{Pricing rule of the market maker (as a function of time and $\xi$ on the left and as a function of time for a fixed $\xi$ on the right) with parameters $(T,\sigma,\gamma)=(1,.5,.1)$ and $\nu\sim$Uniform$(10,20)$.}
\label{fig:test22}
\end{figure}

At each time $t\in (0,T)$ we can also compute the conditional distribution of $\tilde v$ given $\cF^m_t$ via \eqref{eq:densityprice}. 
As it should be, the expression \eqref{eq:densityprice}, interpolates the initial belief $\nu\sim$Unif$(10,20)$ of the market maker at $t=0$ to a Dirac mass (depending on the value of $\xi_T$) at $t=0.95\sim 1$. 

\begin{figure}[H]\label{fig:00}
{\caption{CDF of $\tilde v$ conditional to $(t,\xi_t)\in\{(0,0),(0.5,0),(0.95,0)\}$ versus the initial CDF of $\tilde v\sim$Unif$(10,20)$ with market parameters $(T,\sigma,\gamma)=(1,.5,.1)$.}}
{\includegraphics[width=0.5\textwidth]{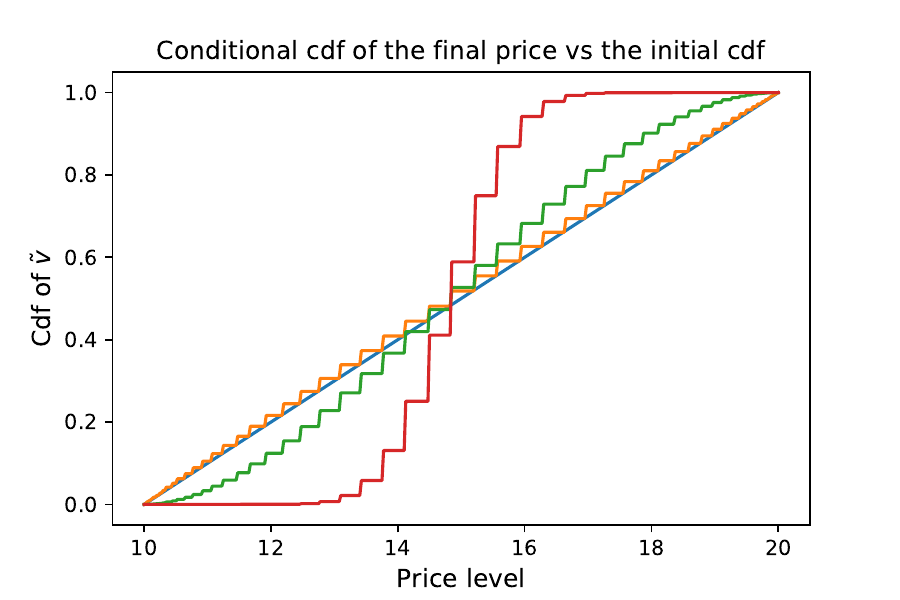}}
\end{figure}

The conditioning being on $\xi_t=0$ the Dirac mass is centered at $0$. However, in Figure \ref{fig:test222}, for different values of $\xi_t$(i.e. different aggregate order history) the Dirac mass moves on the support of $\nu$. 

\begin{figure}[hbt!]
\centering
\begin{subfigure}{.5\textwidth}
  \centering
\includegraphics[width=1\textwidth]{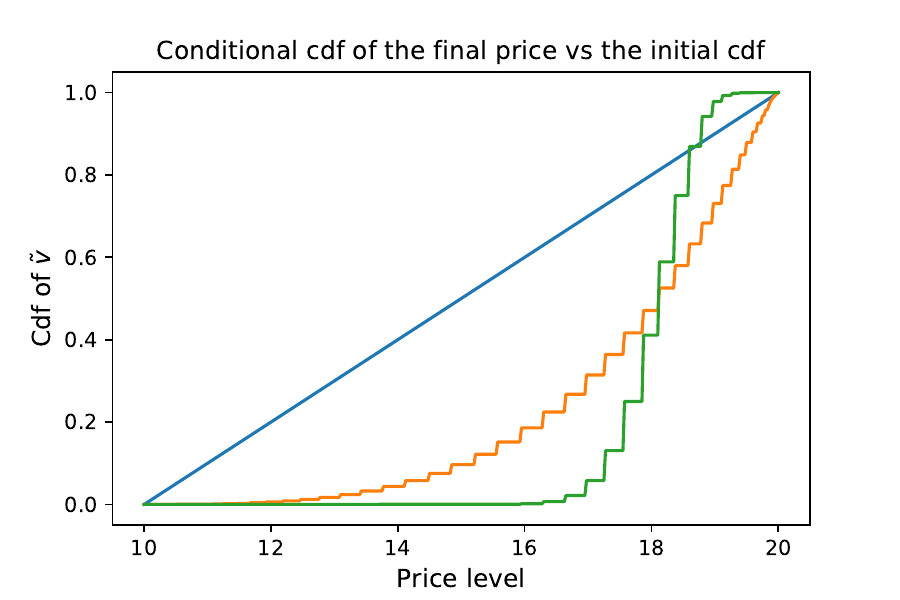}
  \label{fig:sub212}
  \caption{$(t,\xi_t)\in\{(0.5,0.5),(0.95,0.5)\}$. }
\end{subfigure}%
\begin{subfigure}{.5\textwidth}
  \centering
\includegraphics[width=1\textwidth]{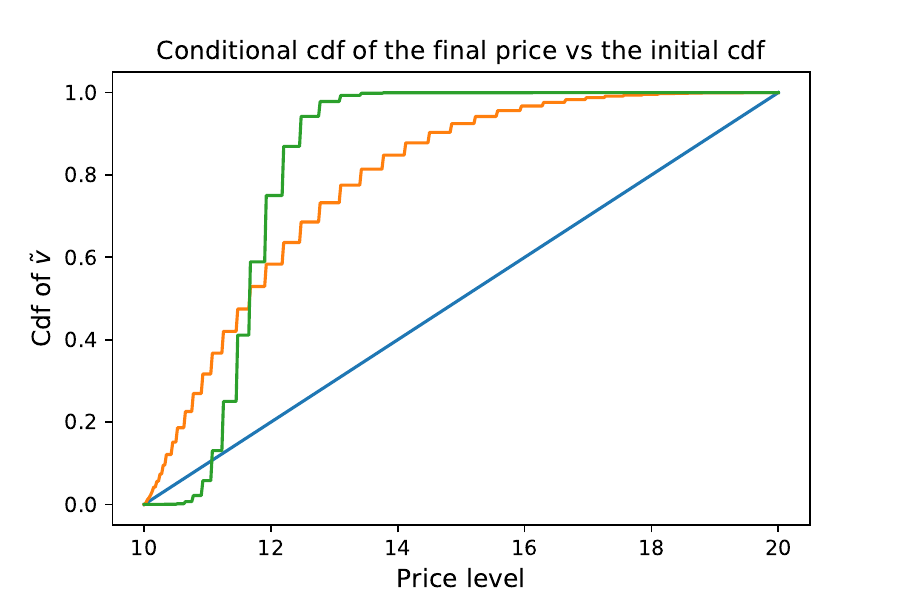}
  \label{fig:sub222}
  \caption{$(t,\xi_t)\in\{(0.5,-0.5),(0.95,-0.5)\}$. }
\end{subfigure}
  \caption{CDF of $\tilde v$ conditional to values of $(t,\xi_t)$ versus the initial CDF of $\tilde v\sim$Unif$(10,20)$ with market parameters $(T,\sigma,\gamma)=(1,.5,.1)$. }
\label{fig:test222}
\end{figure}

\section{Proof of Theorem \ref{thm:eq}}\label{s.proof1}
{Before proving the rationality of the pricing rule and the optimality of \eqref{eq:opt}, we first prove that $H^*\in \cH$. By definition $H^*(t,\sigma B_\cdot)$ has the same distribution as $P^*(t,\xi^{0,*}_t)$. The equations \eqref{eq:pdep} and \eqref{eq:sde} implies that $P^*(t,\xi^{0,*}_t)$ is a local martingale. Additionally, the equality $d P^*(t,\xi^{0,*}_t)= \frac{\pa_\xi P^*(t,\xi^{0,*}_t)}{\pa_\xi \chi^*(t,\xi^{0,*}_t)}\sigma dB_t$ and the bound \eqref{eq:lip} imply that the quadratic variation of $P^*(t,\xi^{0,*}_t)$ is uniformly bounded and therefore this process is a square integrable martingale and \eqref{eq:admm} holds.

Thanks to Lemma \eqref{lem:measurability}, $(t,y)\in \Lambda \mapsto\xi(t,y_\cdot)$ is in $C^{1,2}(\Lambda)$ with $\pa_y \xi(t,y_\cdot)=\frac{1}{\pa_\xi \chi^*(t,\xi_t)}$. Thus, by composition with smooth functions, $(t,y)\mapsto H^*(t,y_\cdot)=P^*(t,\xi(t,y_\cdot))$ is $C^{1,2}(\Lambda)$ and thanks to \eqref{eq:pathder}
its path derivative is 
$$\pa_yH^*(t,y_\cdot)=\frac{\pa_\xi P^*(t,\xi(t,y_\cdot))}{\pa_\xi\chi^*(t,\xi(t,y_\cdot))}>0$$
and $H^*\in \cH$. 
}
\subsection{Problem of the market maker}\label{s:mm}
We now show that if the informed trader uses the strategy \eqref{eq:opt}, then the pricing rule \eqref{eq:eqprice} is rational. 
Thanks to the regularity of $P$ established in Lemma \ref{lem:pdep}, the diffusion coefficient of $\xi$ is uniformly bounded away from $0$. Additionally, due to our fixed point condition, the distribution of $( \pa_\xi\phi^*)^{-1}(\tilde v)$ is the same as the distribution of $\xi_T^0$ and the generator of $\xi$ in \eqref{eq:dxi} is uniformly elliptic thanks to \eqref{eq:lip}. Thus, the results in \cite{ccd} insures that if the informed trader uses the strategy  \eqref{eq:opt} then, conditional to their filtrations, $\xi$ and $\xi^{0,*}$ have the same distribution and $\xi_T=( \pa_\xi\phi^*)^{-1}(\tilde v)$. The latter equality implies that $P_T=P^*(T,\xi_T)=\tilde v$. 
Thus, to obtain the rationality of $H^*$ it is sufficient to show that 
$P^*(t,\xi_t^{0,*})=\frac{\pa_{\xi}\Gamma(t,\xi_t^{0,*})}{\pa_{\xi}\chi(t,\xi_t^{0,*})}$ is a martingale. 
As mentioned above, the martingality of $P^*(t,\xi_t^{0,*})$ is a direct consequence of \eqref{eq:sde}, \eqref{eq:pdep} and the Ito's formula.

Therefore, if the informed trader uses the candidate equilibrium strategy, conditional to the filtration of $\xi$ and therefore to the filtration of $Y$, $P^*(t,\xi_t)$ is a martingale satisfying $P^*(T,\xi_T)=\tilde v.$

\subsection{Informed trader's problem}\label{s:insider}

We now show the martingality condition \eqref{eq:adi} by choosing $\gamma_1\in (0,\gamma_0]$ small enough allowing us to obtain the Novikov's condition 
\begin{align}\label{eq:nov}
\E\left[e^{\frac{\gamma^2\sigma^2}{2}\int_0^T(\tilde v- H^*(t,X^*_\cdot+Z_\cdot))^2dt}\right]<\infty
\end{align}
for all $\gamma\in (0,\gamma_1)$.
Thanks to the previous subsection and Jensen's inequality for all $n\geq 1$, we have 
\begin{align*}
\left(\frac{\gamma^2\sigma^2}{2}\int_0^T(\tilde v- H^*(t,X^*_\cdot+Z_\cdot))^2dt\right)^n&\leq \frac{T^{n-1}\gamma^{2n}\sigma^{2n}}{2^n}\int_0^T(\tilde v- H^*(t,X^*_\cdot+Z_\cdot))^{2n}dt\\
&\leq {T^{n-1}2^{n-1}\gamma^{2n}\sigma^{2n}}\int_0^T|\tilde v|^{2n}+|\E[\tilde v|\cF^m_t]|^{2n}dt\\
&\leq \frac{(2T\gamma^{2}\sigma^{2})^n}{2T}\int_0^T|v|^{2n}+\E[|\tilde v|^{2n}|\cF^m_t]dt
\end{align*}
Thus, 
we obtain $\E\left[\left(\frac{\gamma^2\sigma^2}{2}\int_0^T(\tilde v- H^*(t,X^*_\cdot+Z_\cdot))^2dt\right)^n\right]\leq  {(2T\gamma^{2}\sigma^{2})^n}\E[|\tilde v|^{2n}]$.

If $\nu$ has bounded support then the series  
$\sum_n \frac{1}{n!}{(2T\gamma^{2}\sigma^{2})^n}\E[|\tilde v|^{2n}]$ is convergent and we obtain \eqref{eq:nov} for $\gamma_1=\gamma_0$. 

If $\nu$ is strongly log concave as in Assumption \ref{assum:main} i) then, $p_\nu (x)\leq C \exp(-\frac{x^2}{2\frac{2}{\kappa}})$ for some constant $C$ and  
$$\E[|\tilde v|^{2n}] \leq C \frac{2^n}{\kappa^{n}}1\times 3\times \ldots\times (2n-1)\leq  C \frac{n! 4^n}{\kappa^{n}}$$
where the right hand side is an upper bound for the $2n$-th moment of a Gaussian distribution with variance $\frac{2}{\kappa}$. 
Thus, 
$$\E\left[e^{\frac{\gamma^2\sigma^2}{2}\int_0^T(\tilde v- H^*(t,X^*_\cdot+Z_\cdot))^2dt}\right]\leq C \sum_{n=0}^\infty \frac{(8T\gamma^{2}\sigma^{2})^n}{\kappa^{n}}
$$
is finite for all $\gamma \in (0,\gamma_1)$ for a choice of $\gamma_1\in (0,\gamma_0]$ small enough and we obtain \eqref{eq:nov}. 

Hence under Assumption  \ref{assum:main} we can choose $\gamma_1>0$ so that for all $\gamma\in (0,\gamma_1)$, we have $X^*\in \cA(H^*)$.

We now show that the trading strategy \eqref{eq:opt} is an optimizer for the informed trader's problem. 
Recall
\begin{align}\label{eq:pricing}
P_t=P^*(t,\xi_t)=P^{\phi^*}(t,\xi_t)=\frac{\pa_\xi\Gamma^{*}(t,\xi_t)}{\pa_\xi\chi^{*}(t,\xi_t)}:=\frac{\pa_\xi\Gamma^{\phi^*}(t,\xi_t)}{\pa_\xi\chi^{\phi^*}(t,\xi_t)}.
\end{align}
We now compute the optimal strategy of the informed trader against this pricing rule. 
Let $X\in \cA(H^*)$ be an admissible strategy for the informed trader with semimartingale decomposition
$dX_t={dA_t}+\a_t dB_t$ {for $A_t$ with continuous and finite variation.} By \eqref{eq:dxi}, we have
$$d\xi_t=\frac{1}{\pa_\xi\chi^*(t,\xi_t)}\left({dA_t}+(\sigma+\a_t) dB_t-\frac{\pa_{\xi\xi}\chi^*(t,\xi_t)}{(\pa_\xi\chi^*(t,\xi_t))^2}(\sigma\a_t+\frac{\a^2_t}{2})dt\right).$$
Applying Ito's formula we have that 
\begin{align*}
-\gamma d(\tilde v \chi^*(t,\xi_t)-\Gamma^*(t,\xi_t))=&\left(- \tilde v\gamma^2\sigma^2\frac{\pa_\xi\Gamma^*(t,\xi_t)}{\pa_\xi\chi^(t,\xi_t)}+\frac{\gamma^2\sigma^2(\pa_\xi\Gamma^*(t,\xi_t))^2}{2(\pa_\xi\chi^*)^2(t,\xi_t)}\right)dt\\
&- \tilde v\gamma \frac{((\sigma+\a_t)^2-\sigma^2)\pa_{\xi\xi}\chi^*(t,\xi_t)}{2(\pa_\xi\chi^*)^2(t,\xi_t)}dt\\
&+\frac{\gamma((\sigma+\a_t)^2-\sigma^2)\pa_{\xi\xi}\Gamma^*(t,\xi_t)}{2(\pa_\xi\chi^*)^2(t,\xi_t)}dt\\
&+\gamma\frac{\pa_{\xi\xi}\chi^*(t,\xi_t)}{(\pa_\xi\chi^*(t,\xi_t))^2}\left(\sigma\a_t +\frac{\a_t^2}{2}\right)\left(\tilde v-P_t\right)dt\\
&-\gamma(\tilde v-P_t)( {dA_t}+(\sigma+\a_t) dB_t).
\end{align*}
{Using \eqref{eq::pp}, we can simplify this expression as
\begin{align*}
-\gamma d(\tilde v \chi^*(t,\xi_t)-\Gamma^*(t,\xi_t))=&\frac{\gamma^2\sigma^2}{2}\left({(P_t-\tilde v)^2-\tilde v^2}\right)dt\\
&+\gamma\left(\sigma \a_t+\frac{\a_t^2}{2} \right)\frac{{\pa_{\xi}}P^*(t,\xi_t)}{{\pa_{\xi}}\chi^*(t,\xi_t)}dt\\
&-\gamma(\tilde v-P_t)( dX_t+\sigma dB_t).
\end{align*}
We also have
\begin{align*}
\gamma d\langle P,X\rangle_t&=\gamma(\sigma+\a_t)\a_t\frac{{\pa_{\xi}}P^*(t,\xi_t)}{{\pa_{\xi}}\chi^*(t,\xi_t)}dt.
\end{align*}
Therefore, we have the decomposition of the wealth as
\begin{align}\label{eq:dcmp1}
-\gamma W_T=&\int_0^T -\gamma(\tilde v-P_t)dX_t+\gamma \langle P,X\rangle_T\\
=&\gamma(\tilde v \chi^*(0,0)-\Gamma^*(0,0))-\gamma (\tilde v \xi_T-\phi^*(\xi_T))+\frac{\tilde v^2\gamma^2\sigma^2T}{2}\notag\\
&+\int_0^T\gamma(\tilde v-P_t)\sigma dB_t-\int_0^T\frac{\gamma^2\sigma^2}{2}(\tilde v-P_t)^2 dt+\int_0^T\frac{\gamma\a_t^2{\pa_{\xi}}P^*(t,\xi_t)}{2{\pa_{\xi}}\chi^*(t,\xi_t)}dt.\notag
\end{align}
Note that due to the condition $X\in \cA(H^*)$,
$$\exp\left(\int_0^\cdot \gamma \sigma (\tilde v-P_t)dB_t-\frac{\gamma^2\sigma^2}{2}\int_0^t (\tilde v-P_t)^2dt\right)$$
is a martingale and the utility of the informed trader is 
\begin{align}\label{eq:expansionutility}
&\E\left[-\exp\left(\int_0^T -\gamma(\tilde v-P_t) d X_t+\gamma \langle P,X\rangle_T\right)|\cF_0\right]\\
&=e^{\frac{\tilde v^2\gamma^2\sigma^2T}{2}+\gamma(\tilde v \chi^*(0,0)-\Gamma^*(0,0))}\tilde \E\left[-e^{-\gamma (\tilde v \xi_T-\phi^*(\xi_T))+\gamma\int_0^T\frac{\a_t^2{\pa_{\xi}}P^*(t,\xi_t)}{2{\pa_{\xi}}\chi^*(t,\xi_t)}dt}|\cF_0\right]\notag
\end{align}
where $\tilde \E$ is the equivalent probability measure obtained via Girsanov theorem under which 
$$dB_t-\gamma\sigma (\tilde v-P_t)dt$$ is a Brownian motion.

Define the convex conjugate of $\phi^*$, $\phi^c(v)=\sup_y vy-\phi^*(y)$. Thanks to \eqref{eq:expansionutility} and the positivity of ${\pa_{\xi}}P$ and ${\pa_{\xi}}\chi$, 
$$\E\left[-\exp(-\gamma W_T)|\cF_0\right]\leq -e^{\frac{\tilde v^2\gamma^2\sigma^2T}{2}+\gamma(\tilde v \chi^*(0,0)-\Gamma^*(0,0))-\gamma\phi^c(\tilde v)}$$
and any strategy of the informed trader that insures 
\begin{align}\label{eq:transport}
\tilde v={\pa_{\xi}}\phi^*(\xi_T)
\end{align} 
and $\a_t=0$
is a pointwise maximizer of the integrand under the expectation on the right hand side of \eqref{eq:expansionutility} and
is therefore optimal. 

Thanks to \cite[Theorem 2.1]{ccd}, if the informed trader uses the control \eqref{eq:opt} then, he insures
$\xi_T=( \pa_{\xi}\phi^*)^{-1}(\tilde v)$ and therefore \eqref{eq:opt} is optimal for the informed trader.
This concludes the proof of the theorem.

\begin{remark}
The positive term $\frac{{\pa_{\xi}}P^*(t,\xi_t)}{{\pa_{\xi}}\chi^*(t,\xi_t)}$ in \eqref{eq:expansionutility} renders any martingale part of the strategies of the informed trader suboptimal. Note that  $\frac{{\pa_{\xi}}P^*(t,\xi_t(y_\cdot))}{{\pa_{\xi}}\chi^*(t,\xi_t(y_\cdot))}=\pa_y H^*(t,y_\cdot)$. Thus, as proven in \cite{cn}, the positivity of $\pa_y H^*$ is the main penalization of martingale part of the strategies of the informed trader. 
\end{remark}}

\section{Construction of the Fixed Point} \label{s.construct}
We prove in this section the Theorem \ref{thm:fixed} and auxiliary results. 
\subsection{Proof of Lemma \ref{lem:pdep}}\label{ss:quasi}
We formally differentiate \eqref{eq:pdep} to obtain that $R=\textcolor{black}{\pa_{\xi}}P$ solves
\begin{align}\label{eq:pdepp}
 \textcolor{black}{\pa_{t}}R+\frac{\sigma^2\textcolor{black}{\pa_{\xi\xi}}R}{2(1-\gamma\sigma^2 R(t,\xi)(T-t))^2}+\frac{\gamma\sigma^4  \textcolor{black}{\pa_{\xi}}R^2(t,\xi)(T-t)}{(1-\gamma\sigma^2 R(t,\xi)(T-t))^3}&=0\\
R(T,\xi)&=\textcolor{black}{\pa_{\xi\xi}}\phi(\xi).
\end{align}
 We first establish wellposedness for this equation which will be needed to obtain global existence of the solutions of \eqref{eq:pdep}.
\begin{lemma}\label{lem:quasi}
Let $\phi\in C_{l,\a}$ with $l \in (0,\frac{1}{T\gamma \sigma^2})$. Then, there exists a unique $R=R^\phi:[0,T]\times \R\mapsto \R$ of class $C^{1,2}([0,T]\times \R)$ solving the PDE
\begin{align}\label{eq:pder}
\textcolor{black}{\pa_{t}} R+\frac{\sigma^2\textcolor{black}{\pa_{\xi\xi}}R}{2(1-\gamma\sigma^2 R(t,\xi)(T-t))^2}+\frac{\gamma\sigma^4  \textcolor{black}{\pa_{\xi}}R^2(t,\xi)(T-t)}{(1-\gamma\sigma^2 R(t,\xi)(T-t))^3}&=0\\
R(T,\xi)&=\textcolor{black}{\pa_{\xi\xi}}\phi(\xi).
\end{align}
which also satisfies $0\leq R\leq \sup\textcolor{black}{\pa_{\xi\xi}}\phi$. 
%Additionally, if $\phi^n_{\xi\xi}\to \phi_{\xi\xi}$ in $L^\infty$ with $\sup_n|\phi^n_{\xi\xi}|_\a<\infty$ and $(\phi^n_{\xi}(0),\phi^n(0))\to (\phi_{\xi}(0),\phi(0))$ with $\phi^n,\phi\in C_{\lambda,\a}$, then 
%\begin{align}\label{eq:contdepend}
%(\Gamma^{\phi^n}(0,0),\chi^{\phi^n}(0,0),P^{\phi^n}(0,0))\to  (\Gamma^{\phi}(0,0),\chi^{\phi}(0,0),P^{\phi}(0,0))\mbox{ as }n\to \infty.
%\end{align}
%where $P^{\phi^n}(t,\xi)=\phi^n_\xi(0)+\int_0^\xi R^{\phi^n}(t,y)dy$ is the solution of \eqref{eq:pdep} and 
%$\Gamma,\chi$ are the solutions of \eqref{eq:system} and satisfy \eqref{eq:defzg}.
\end{lemma}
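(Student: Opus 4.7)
The plan is to reverse time, establish a priori pointwise bounds via the parabolic maximum principle, and then apply standard quasilinear parabolic theory to the resulting uniformly parabolic Cauchy problem. Setting $\tilde R(s,\xi)=R(T-s,\xi)$ converts \eqref{eq:pder} into the forward problem
\begin{equation*}
\pa_s\tilde R = \frac{\sigma^2 \pa_{\xi\xi}\tilde R}{2(1-\gamma\sigma^2 s\tilde R)^2}+\frac{\gamma\sigma^4 s(\pa_\xi\tilde R)^2}{(1-\gamma\sigma^2 s\tilde R)^3},\qquad \tilde R(0,\cdot)=\pa_{\xi\xi}\phi\in[0,l].
\end{equation*}

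My first step is to show $0\le\tilde R\le l$ via the parabolic maximum principle. At an interior spatial minimum one has $\pa_\xi\tilde R=0$ and $\pa_{\xi\xi}\tilde R\ge 0$, so $\pa_s\tilde R\ge 0$; at an interior maximum the opposite signs yield $\pa_s\tilde R\le 0$. Since $\pa_{\xi\xi}\phi\in[0,l]$ initially and $\gamma\sigma^2Tl<1$, the interval $[0,l]$ is preserved and the quantity $1-\gamma\sigma^2 s\tilde R$ stays in $[1-\gamma\sigma^2Tl,1]$, bounded away from zero. Thus, on the relevant range of $\tilde R$, the equation is uniformly parabolic with smooth nonlinear coefficients.

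For existence, I would truncate the nonlinear coefficients outside $[0,l]$ so that they become globally Lipschitz in $R$ and stay uniformly away from degeneracy; the truncated equation still has natural (quadratic) growth in $\pa_\xi\tilde R$. Classical quasilinear parabolic theory on $[0,T]\times\R$ with bounded smooth initial data then yields a classical $C^{1,2}$ solution: the core ingredient is a Bernstein-type a priori gradient estimate (differentiating the equation and testing against a cut-off times $(\pa_\xi\tilde R)^2$ to absorb the quadratic term using the uniform ellipticity), after which parabolic Schauder estimates upgrade the regularity to the level dictated by $\phi\in C^{4+\a}_{loc}$. By the previous paragraph the truncation is never active, so this solves \eqref{eq:pder}, and interior Schauder estimates give $R\in C^{1,2}([0,T]\times\R)$.

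Uniqueness follows by linearization: the difference $W=\tilde R_1-\tilde R_2$ of two solutions satisfies a linear parabolic equation whose coefficients are bounded (mean-value-theorem-linearizing the smooth nonlinearity and using the $L^\infty$ bound on $\tilde R$ together with the gradient estimate), and a standard energy/comparison argument gives $W\equiv 0$. The main obstacle is precisely the quadratic growth in $\pa_\xi\tilde R$ in the second nonlinear term: a direct Schauder fixed-point loop on the linearized problem does not close without an a priori control on $\pa_\xi\tilde R$, so the Bernstein-type gradient estimate is the essential technical step, while the rest of the argument is routine thanks to the range preservation coming from the maximum principle.
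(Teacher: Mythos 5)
Your strategy is the same as the paper's in outline: modify the equation so the denominator cannot degenerate, prove the a priori bound $0\le R\le \sup\pa_{\xi\xi}\phi$ so that, together with $\gamma\sigma^2 Tl<1$, the modification is never active, and then invoke classical quasilinear parabolic theory for existence and uniqueness. The differences are in execution. The paper keeps the equation in divergence form, $\pa_t R+\pa_\xi\bigl(\tfrac{\sigma^2\pa_\xi R}{2(1-\gamma\sigma^2(T-t)R)^2}\bigr)=0$, penalizes the denominator by a smooth function $\Psi$ bounded below, and gets existence and regularity directly from \cite[Chapter V, Theorem 6.1]{lsu}; uniqueness then follows because any solution obeying the $L^\infty$ bound also solves the penalized divergence-form equation, which is unique. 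This is exactly what spares the paper the Bernstein-type gradient estimate that you (correctly) identify as the essential technical step of your non-divergence route: by working in non-divergence form with a quadratic gradient term, you take on an estimate that the divergence structure makes unnecessary, and your proposal leaves that estimate as a sketch.

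The one point where your argument has a real gap in rigor is the maximum principle step. You argue via an interior spatial minimum/maximum, but the spatial domain is all of $\R$, so a bounded solution need not attain its extrema, and the sign argument at an extremum does not apply as stated. This is precisely the issue the paper's proof is organized around: it perturbs the solution to $\underline u=\tilde R-\tfrac{\e\xi^2}{2}$, builds the explicit supersolution $\sup(\pa_{\xi\xi}\phi)\,e^{\e(\frac{\sigma^2}{2}+2\gamma\sigma^4 TM)(T-t)}$, compares the two on the bounded region $\xi^2\le 2M/\e$ (where the perturbation forces the right boundary inequalities) using the comparison theorem of \cite{cr}, and then sends $\e\to0$. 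A Phragm\'en--Lindel\"of-type argument or the paper's perturbation would repair your step, but as written the preservation of $[0,l]$ — on which both the non-degeneracy and the claim that the truncation is inactive rest — is not justified. Note also that, logically, the bound must be proved for the truncated equation (whose solution is the one you have), not for the original one; your extremum argument does transfer to the truncated equation, but you should say so explicitly, as the paper does when it verifies $\Psi(1-\gamma\sigma^2\tilde R(T-t))=1-\gamma\sigma^2\tilde R(T-t)$ a posteriori.
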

\begin{proof}

 This equation can be written as a quasilinear parabolic PDE of divergence form as 
 \begin{align}\label{eq:quasilin}
 \textcolor{black}{\pa_{t}} R+\pa_\xi\left(\frac{\sigma^2 \textcolor{black}{\pa_{\xi}}R}{2(1-\gamma\sigma^2 (T-t)R)^2}\right)&=0\\
  R(T,\xi)&=\textcolor{black}{\pa_{\xi\xi}}\phi(\xi).
 \end{align}
 The main difficulty is the fact that the denominator can become $0$. To avoid this technicality we penalize the PDE and show that this penalization still yields to a solution to \eqref{eq:quasilin}. 
 
 Pick a smooth monotone function with bounded derivatives 
 $$\Psi : \R\mapsto \R^+$$ 
 satisfying 
 $$\Psi(x)=x,\mbox{ if }x\geq 1-T\gamma\sigma^2 l\mbox{ and }\Psi(x)=\frac{1-T\gamma\sigma^2 l}{2} \mbox{ if }x\leq \frac{\e}{2}.$$
 
We now study the wellposedness for 
\begin{align}\label{eq:approximation}
   \textcolor{black}{\pa_{t}}\tilde R+\pa_\xi\left(\frac{\sigma^2 \textcolor{black}{\pa_{\xi}}\tilde R}{2\Psi^2(1-\gamma\sigma^2 \tilde R(t,\xi)(T-t))}\right)&=0\\
  \tilde R(T,\xi)&=\textcolor{black}{\pa_{\xi\xi}}\phi(\xi).
 \end{align}
 Due to the fact that $\textcolor{black}{\pa_{\xi\xi}}\phi\in C^{2+\a}_{loc}$ and bounded, \eqref{eq:approximation} satisfies all the assumptions of \cite[Chapter V, Theorem 6.1]{lsu}. Thus, there exists $\tilde R\in C^{1+\a/2,2+\a}_{loc}([0,T]\times \R)$ solving \eqref{eq:approximation}. 
  
We now show that $\tilde R\leq \sup \textcolor{black}{\pa_{\xi\xi}}\phi$.  
We know that $\tilde R$ is uniformly bounded. Denote $M=\sup \tilde R$. For $\e>0$ to be determined define 
$$\underline u(t,\xi)=\tilde R(t,\xi)-\frac{\e \xi^2}{2}$$ 
so that 
$\underline u$ solves 
\begin{align}\label{eq:approximation2}
    \textcolor{black}{\pa_{t}}\underline u+\pa_\xi\left(\frac{\sigma^2( \textcolor{black}{\pa_{\xi}}\underline u+\e\xi)}{2\Psi^2(1-\gamma\sigma^2 (T-t)(\underline u(t,\xi)+\frac{\e \xi^2}{2}))}\right)&=0\\
\underline u(T,\xi)&=\textcolor{black}{\pa_{\xi\xi}}\phi(\xi)-\frac{\e \xi^2}{2}.
 \end{align}
Additionally if $\xi^2= \frac{2M}{\e}$, $\underline u(t,\xi)\leq 0$. 

Define also the function $$\overline u(t,\xi)=\sup\textcolor{black}{(\pa_{\xi\xi}\phi)} e^{\e (\frac{\sigma^2}{2}+2\gamma\sigma^4T M)(T-t)}$$
which is a supersolution to  
\begin{align}\label{eq:approximation2}
    \textcolor{black}{\pa_{t}}\overline u+\pa_\xi\left(\frac{\sigma^2(\textcolor{black}{\pa_{\xi}} \overline u+\e\xi)}{2\Psi^2(1-\gamma\sigma^2 (T-t)(\overline u(t,\xi)+\frac{\e \xi^2}{2}))}\right)&\leq 0\mbox{ on }\xi^2\leq \frac{2M}{\e}\\
\overline u(T,\xi)&\geq \textcolor{black}{\pa_{\xi\xi}}\phi(\xi).
 \end{align}
Additionally, if $\xi^2= \frac{2M}{\e}$, $\overline u(t,\xi)\geq 0$. 
Thus, by the comparison result in \cite[Theorem 1.6]
{cr} $$\underline u(t,x)\leq \overline u(t,x). $$
Sending $\e\to 0$ we obtain that 
$$\tilde R\leq \sup\textcolor{black}{\pa_{\xi\xi}}\phi.$$
Similarly, we can also prove that 
$$0\leq \tilde R.$$
Thus, for all $(t,x)\in [0,T]\times R$, 
$$\Psi(1-\gamma\sigma^2 \tilde R(t,\xi)(T-t)))=1-\gamma\sigma^2 \tilde R(t,\xi)(T-t)$$
and 
$\tilde R=R$ solves \eqref{eq:pder}. The uniqueness comes from that fact that any solution of \eqref{eq:pder} satisfying $0\leq R\leq \sup\textcolor{black}{\pa_{\xi\xi}}\phi$ also satisfies \eqref{eq:approximation} which has a unique solution. 
\iffalse
\begin{comment}
We now show the convergence \eqref{eq:contdepend}. We first note that \cite[Theorem 4.3]{cmh} yields that 
$$R^{\phi_n}(0,\cdot)\to R^{\phi}(0,\cdot)$$ in $L^2_{loc}$.  
Additionally $\sup_{n}|R^{\phi^n}|_\a<\infty$.  
Thus, $$P^{\phi^n}(t,x)\to P^{\phi}(t,x)$$
uniformly on all compact sets which is sufficient to prove \eqref{eq:contdepend}.
\end{comment}
\fi
\end{proof}

Given the Lemma \ref{lem:quasi} we can now prove the Lemma \ref{lem:pdep}.
\begin{proof}[Proof of Lemma \ref{lem:pdep}]
In Lemma \ref{lem:quasi}, we study $\textcolor{black}{\pa_{\xi}}P$ and show wellposedness for the equation solved by $R=\textcolor{black}{\pa_{\xi}}P$
\begin{align}\label{eq:pder2}
 \textcolor{black}{\pa_t} R+\frac{\sigma^2\textcolor{black}{\pa_{\xi\xi}}R}{2(1-\gamma\sigma^2 R(t,\xi)(T-t))^2}+\frac{\gamma\sigma^4  \textcolor{black}{\pa_{\xi}}R^2(t,\xi)(T-t)}{(1-\gamma\sigma^2 R(t,\xi)(T-t))^3}&=0\\
R(T,\xi)&=\textcolor{black}{\pa_{\xi\xi}}\phi(\xi).
\end{align}
Given the function $R$, one can show by a direct computation that 
\begin{align}\label{eq:constintp}
P(t,\xi)=\textcolor{black}{\pa_{\xi}}\phi(0)+\frac{\sigma^2}{2}\int_t^T\frac{\textcolor{black}{\pa_{\xi}}R (s,0)ds}{(1-\gamma\sigma^2 (T-s)R(s,0))^2}+\int_0^\xi R(t,r)dr
\end{align}
is a solution of \eqref{lem:pdep}. 

The uniqueness is a consequence of the fact that the derivatives of any two solution of \eqref{eq:pdep} has to solve \eqref{eq:pder2} which has a unique solution. Additionally, there is only one way of obtaining the constants of integration in $\xi$ as in \eqref{eq:constintp}.
\end{proof}

\begin{proof}[Proof of Proposition \ref{prop:f}]
Fix $l\in (0,\frac{1}{T\sigma^2\gamma})$ and $\phi\in C_{l,\a}$ and omit the dependence in these quantities. The inequality \eqref{eq:lip} implies that for all $t\in [0,T]$, $\xi\mapsto \chi(t,\xi)$ is invertible and we denote its inverse mapping who is $C^{1,2}([0,T]\times \R)$ as $\chi^{-1}$. We fix $r\in (0,1)$ and $x\in {\R}$, thanks to \eqref{eq:dynxi}, we have that the process $\chi_t=\chi(t,\tilde \xi_t)$ satisfies 
\begin{align*}\chi_t=\chi(r,x)+\sigma \tilde B_t+\int_r^t\sigma^2b (s,\chi_s)ds.
\end{align*}where $b(s,y)= {\gamma}P (s,\chi^{-1}(s,y))$ and $\tilde B_t=B_t-B_r$. 
Define ${J(s,y)}=\gamma \Gamma(s,\chi^{-1}(s,y))$. Then, 
\begin{align*}
{J(t,\chi(r,x)+\sigma\tilde B_t)}&=\gamma \Gamma(r,\chi^{-1}(r,\chi(r,x)))\\
&+\int_r^tb(s,\chi(r,x)+\sigma \tilde B_s)\sigma dB_s-\frac{\sigma^2}{2}\int_r^tb^2(s,\chi(r,x)+\sigma \tilde B_s)ds
\end{align*}
and therefore, 
\begin{align*}
e^{\int_r^tb(s,\chi(r,x)+\sigma {\tilde B_s})\sigma dB_s-\frac{\sigma^2}{2}\int_r^tb^2(s,\chi(r,x)+\sigma {\tilde B_s})ds}
=e^{\gamma \Gamma(t,\chi^{-1}(t,\chi(r,x)+\sigma \tilde B_t))-\gamma \Gamma(r,x)}.
\end{align*}
Thus,  \cite[Theorem 3.1]{tt} yields that $\chi_t$ has density 
\begin{align*}
y\mapsto \frac{\exp\left(\gamma \Gamma(t,\chi^{-1}(t,y))-\gamma\Gamma(r,x)-\frac{|y-\chi(r,x)|^2}{2\sigma^2(t-r)}\right).
}{\sqrt{2\pi \sigma^2 (t-r)}}
\end{align*}
 Therefore $\tilde \xi_t=\chi^{-1}(t,\chi_t)$ has density
$$G(r,x,t,y)=\chi_\xi(t,y) \frac{\exp\left(\gamma \Gamma(t,y)-\gamma\Gamma(r,x)-\frac{|\chi(t,y)-\chi(r,x)|^2}{2\sigma^2(t-r)}\right)
}{\sqrt{2\pi \sigma^2 (t-r)}}.$$
Combined with ${J(T,y)}=\gamma \Gamma(T,\chi^{-1}(T,y))=\gamma \phi(y)$ these densities yields that 
$\mu_\phi$ has density 
$$\frac{1}{\sqrt{2\pi \sigma^2T}}e^{-\frac{|y-\chi(0,0)|^2}{2\sigma^2T}}e^{\gamma \phi(y)-\gamma \Gamma(0,0)}=f_\phi(y).$$
Note also that for $l <\frac{1}{T\sigma^2\gamma}$ and $\phi\in C_{l,\a}$, we have that 
$$\gamma \phi(y)-\gamma \Gamma^\phi(0,0)-\frac{|\chi^\phi(0,0)-y|^2}{2\sigma^2T}\leq \frac{\gamma l}{2}(c_\phi+ (1-\frac{1}{{\gamma l \sigma^2 T}}) \frac{y^2}{2})$$
for a constant $c_\phi$ depending on $\phi$. 
Thus, $f_\phi$ is integrable (and of integral $1$). 

We now show that $f_{\phi}$ only depends on the second derivative of $\phi$. Recall $R$ defined in Lemma \ref{lem:quasi} which only depend on $\textcolor{black}{\pa_{\xi\xi}}\phi$. Using \eqref{eq:constintp} and \eqref{eq:constintg}, we have that 
\begin{align*}
P(0,0)&=\textcolor{black}{\pa_{\xi}}\phi(0)+A\\
\Gamma(0,0)&={\tilde A}-\frac{\gamma\sigma^2 T}{2}(\textcolor{black}{\pa_{\xi}}\phi(0)+A)^2
\end{align*}
where the terms
\begin{align*}
A&=\frac{\sigma^2}{2}\int_0^T\frac{\textcolor{black}{\pa_{\xi}}R (s,0)ds}{(1-\gamma\sigma^2 (T-s)R(s,0))^2}\\
{\tilde A}&=\int_0^T\frac{\sigma^2 R(s,0)}{2(1-\gamma \sigma^2(T-s)R(s,0))}ds
\end{align*}
only depend on $R$ and therefore on $\textcolor{black}{\pa_{\xi\xi}}\phi$. We now inject these expressions in \eqref{eq:fphi} to obtain thanks to \eqref{eq:defzg} that 
\begin{align*}
 &\phi(y)- \Gamma^\phi(0,0)-\frac{|\chi^\phi(0,0)-y|^2}{2\gamma\sigma^2T}\\
 &=\phi(y)-\textcolor{black}{\tilde A}+\frac{\gamma\sigma^2T}{2}(\textcolor{black}{\pa_{\xi}}\phi(0)+A)^2\\
 &-\frac{1}{2\gamma\sigma^2T}\left(y^2+2y\gamma\sigma^2T (\textcolor{black}{\pa_{\xi}}\phi(0)+A)+\gamma^2\sigma^4T^2 (\textcolor{black}{\pa_{\xi}}\phi(0)+A)^2\right)\\
 &=\phi(y)-y\textcolor{black}{\pa_{\xi}}\phi(0)-\textcolor{black}{\tilde A}-yA-\frac{y^2}{2\gamma\sigma^2T}.
\end{align*}
Since $\phi(0)=0$, using the identity 
$$\phi(y)-y\textcolor{black}{\pa_{\xi}}\phi(0)=\int_0^y\int_0^r\textcolor{black}{\pa_{\xi\xi}}\phi(s)dsdr$$
we have that 
\begin{align}\label{eq:fphi2}
f_\phi(y)= \frac{1}{\sqrt{2\pi \sigma^2 T}}\exp\left(\gamma\int_0^y\int_0^r\textcolor{black}{\pa_{\xi\xi}}\phi(s)dsdr-\gamma \textcolor{black}{\tilde A}-\gamma yA-\frac{y^2}{2\sigma^2T}\right)
\end{align}
which only depends on $\textcolor{black}{\pa_{\xi\xi}}\phi$. 
\end{proof}
Unfortunately, the stability estimates available in the literature for the solutions of \eqref{eq:pder2} or \eqref{eq:pdep} are not strong enough to establish the existence of fixed point $\phi^*$ (see \cite{cr,cmh,k,l}).  
We now provide a new stochastic representation for the solution of \eqref{eq:pdep} which is also of independent interest. To state and prove this representation we will be working with the derivative of $\phi$ instead of $\phi$ itself. For this purpose define 
\begin{align}\label{eq:defcx}
\cX_l=\left\{g\in Lip(\R):0 \leq g'\leq l \mbox{ a.e. }\right\}.
\end{align}
For $g\in \cX_l$, we denote $\tilde g$ its antiderivative that is $0$ at $0$.  Due to our definition, 
we have that for $g$ smooth enough $g\in \cX_l$ if and only if $\tilde g\in C_{l, \a}$. We now give the following lemma that studies the continuity of the map $g\in \cX_l\mapsto \chi^{\tilde g}(0,0)$. 

\begin{lemma}\label{lem:rep}
For $l <\frac{1}{2T\sigma^2\gamma}$, $g\in \cX_{l}$, and $z\in \R$, 
denote $$G_g(z)=\frac{1}{\sqrt{2\pi \sigma^2 T}}\int_\R e^{\frac{z^2}{2\sigma^2T}+\gamma \tilde g(z+y)-\frac{1}{2\sigma^2 T}y^2}dy=\E\left[e^{\frac{z^2}{2\sigma^2T}+\gamma \tilde g(z+\sigma B_T)}\right].$$ 
Then, by extending the domain of $\chi(0,0)$ and $\Gamma(0,0)$ (as function of $\tilde g$), we have $\chi^{\tilde g}(0,0)=\argmin_{z\in \R}G_g(z)$ and {$\Gamma^{ \tilde g}(0,0)=\min_{z\in \R}\frac{\ln G_g(z)}{\gamma}-\frac{(\chi^{{\tilde g}}(0,0))^2}{2\sigma^2\gamma T}$} for all $g\in \cX_l$.
Additionally, $\chi^{\tilde g}(0,0)$ and $\Gamma^{\tilde g}(0,0)$ are bounded by a constant that only depends on $(g (0),\gamma,\sigma, T)$ and 
 for $g^n\in \cX_{l}$ with 
\begin{align}\label{eq:condconv}
g^n\to g^0\mbox{ uniformly on compact sets},
\end{align} 
then, we have
\begin{align}\label{eq:convz}
(\chi^{\tilde g^n}(0,0),\Gamma^{\tilde g^n}(0,0))\to (\chi^{\tilde g^0}(0,0),\Gamma^{\tilde g^0}(0,0)).
\end{align}
\end{lemma}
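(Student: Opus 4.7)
The plan is to derive the variational formulas for smooth $g$ (those with $\tilde g\in C_{l,\alpha}$) directly from Proposition \ref{prop:f}, then use those formulas as the definition of $\chi^{\tilde g}(0,0)$ and $\Gamma^{\tilde g}(0,0)$ for general $g\in\cX_l$, and finally obtain the continuity statement via dominated convergence combined with standard convergence results for argmins of strictly convex functions. For the smooth case I would extract two identities from Proposition \ref{prop:f}. Since $f_{\tilde g}$ is a probability density, integrating \eqref{eq:fphi} and changing variable $u=y-\chi^{\tilde g}(0,0)$ yields
$$G_g(\chi^{\tilde g}(0,0))=\exp\!\Big(\gamma\Gamma^{\tilde g}(0,0)+\frac{(\chi^{\tilde g}(0,0))^2}{2\sigma^2 T}\Big),$$
which reproduces the stated formula for $\Gamma^{\tilde g}(0,0)$. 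Moreover, \eqref{eq:lip} ensures the diffusion coefficient of $\xi^0$ in \eqref{eq:sde} is bounded away from both $0$ and $\infty$, so $\xi^0$ is a square-integrable martingale started at $0$; hence $\E[\xi^0_T]=\int y\,f_{\tilde g}(y)\,dy=0$. Rewriting this identity via the same change of variable shows that $G_g'(\chi^{\tilde g}(0,0))=0$, so $\chi^{\tilde g}(0,0)$ is a critical point of $G_g$.

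Next I would prove that $L_g:=\ln G_g$ is strictly convex and coercive (hence admits a unique minimizer, necessarily equal to $\chi^{\tilde g}(0,0)$ in the smooth case), and extend the definitions to all $g\in\cX_l$ using the variational formulas. Writing
$$G_g(z)=\frac{1}{\sqrt{2\pi\sigma^2 T}}\int_\R e^{\gamma\tilde g(u)-u^2/(2\sigma^2 T)+uz/(\sigma^2 T)}\,du$$
exhibits $G_g$ as a Laplace transform of a positive measure, so $L_g$ is convex with $L_g''(z)=\mathrm{Var}_{p_z}(U)/(\sigma^2 T)^2>0$, where $p_z$ is the non-degenerate probability measure proportional to the integrand. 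The threshold $l<1/(2T\sigma^2\gamma)$ gives $\tilde g(u)\leq g(0)u+(l/2)u^2$ (from $\tilde g(0)=0$, $\tilde g'(0)=g(0)$, $\tilde g''\leq l$), whose $u^2$ contribution is strictly dominated by $-u^2/(2\sigma^2 T)$, while convexity of $\tilde g$ together with Jensen's inequality provide the lower bound $L_g(z)\geq z^2/(2\sigma^2 T)+\gamma g(0)z$. Extending by declaring $\chi^{\tilde g}(0,0):=\argmin G_g$ and $\Gamma^{\tilde g}(0,0):=\ln G_g(\chi^{\tilde g}(0,0))/\gamma-(\chi^{\tilde g}(0,0))^2/(2\gamma\sigma^2 T)$ for all $g\in\cX_l$ is then meaningful, and the a priori bounds follow by comparing $L_g(\chi^{\tilde g}(0,0))\leq L_g(0)=\ln\E[e^{\gamma\tilde g(\sigma B_T)}]$ with the quadratic lower bound, producing bounds on $|\chi^{\tilde g}(0,0)|$ and $|\Gamma^{\tilde g}(0,0)|$ depending only on $(g(0),\gamma,\sigma,T)$.

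For the continuity statement under \eqref{eq:condconv}, $M:=\sup_n|g^n(0)|<\infty$, so $\tilde g^n(u)\leq(l/2)u^2+M|u|$ uniformly in $n$, and on any compact $z$-set the integrand defining $G_{g^n}(z)$ is dominated by a fixed integrable Gaussian (since $\gamma l<1/(2\sigma^2 T)$). Dominated convergence gives $G_{g^n}(z)\to G_{g^0}(z)$ pointwise, and for convex functions pointwise convergence upgrades to locally uniform convergence. Combined with the uniform boundedness of $\chi^{\tilde g^n}(0,0)$ and the strict convexity of $L_{g^0}$, standard convex-analytic arguments force every subsequential limit of $\chi^{\tilde g^n}(0,0)$ to equal $\chi^{\tilde g^0}(0,0)$, so the full sequence converges; convergence of $\Gamma^{\tilde g^n}(0,0)$ then follows from its defining formula. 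The main obstacle will be the quantitative part of the second paragraph: since the threshold $l<1/(2T\sigma^2\gamma)$ is tight, the coefficient bookkeeping needed to simultaneously ensure coercivity of $L_g$ and uniform boundedness of the minimizer over $g\in\cX_l$ with $g(0)$ controlled must be handled carefully. Once this is in place, the remaining pieces—strict convexity, dominated convergence, and the convex-analytic continuity argument—follow from soft considerations.
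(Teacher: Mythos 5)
Your proposal is correct, and it follows the paper's overall architecture: identify $\chi^{\tilde g}(0,0)$ and $\Gamma^{\tilde g}(0,0)$ with the minimizer and minimal value of the explicit functional $G_g$ for smooth data via Proposition \ref{prop:f}, take the variational formulas as the definition on all of $\cX_l$, and get the bounds and the stability \eqref{eq:convz} from dominated convergence plus convexity. It differs from the paper in two execution points, both legitimate. First, for the consistency check you derive the first-order condition $G_g'(\chi^{\tilde g}(0,0))=0$ from $\E[\xi^0_T]=0$, i.e.\ from the martingality of the stochastic integral \eqref{eq:sde} with bounded integrand, whereas the paper derives it from the martingality of $P(t,\xi^0_t)$ (which uses the PDE \eqref{eq:pdep} and \Ito's formula) combined with \eqref{eq:defzg} at $(0,0)$; after completing the square (or by Gaussian integration by parts) the two identities are equivalent, and your route is slightly more elementary since it does not need the pricing PDE beyond Proposition \ref{prop:f}. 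Second, the paper proves strong convexity of $G_g$ itself, with a modulus depending only on $g(0)$, and obtains both the a priori bound on the minimizer and the convergence of minimizers from the quantitative inequality $|G'_{g^n}(z_0)|\geq C|z_0-z_n|$ together with convergence of $G'_{g^n}$; you instead work with strict convexity of $\ln G_g$ (exponential-family variance formula), coercivity via Jensen, the comparison of the value at the minimizer with the value at $0$ for the a priori bounds, and the soft fact that pointwise (hence locally uniform) convergence of convex functions plus uniqueness of the limiting minimizer forces convergence of the argmins. The trade-off is that the paper's argument is quantitative while yours avoids lower-bounding $G_g''$; both deliver the lemma. The one step to write out carefully, as you note, is the uniform domination under the threshold $\gamma l<\tfrac{1}{2\sigma^2T}$, which is needed both to differentiate under the integral and for the dominated-convergence step; your bound $\gamma\tilde g^n(z+y)\leq \gamma M|z+y|+\tfrac{(z+y)^2}{4\sigma^2T}$, locally uniformly in $z$, does the job and is the same splitting of the Gaussian weight used in the paper's Step 1.
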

\begin{proof}
Fix $l \in(0,\frac{1}{2T\sigma^2\gamma})$, and $g\in \cX_{l}$. We denote 
$\tilde \chi^{\tilde g}=\argmin_{z\in \R}G_{ g}(z)$ and $\tilde \Gamma^{\tilde g}=\min_{z\in \R}\frac{\ln G_g(z)}{\gamma}-\frac{(\chi^{{\tilde g}}(0,0))^2}{2\sigma^2\gamma T}$ if they exist.

{\it Step 1: Properties of $G_g$.} 
 We first show that $G_g$ is well defined and is twice continuously differentiable. 
Splitting the terms in the exponential, we have
\begin{align}\label{eq:intG}
G_g(z)=\E\left[e^{\frac{z^2}{2\sigma^2T}+\gamma {\tilde g}(z+\sigma B_T)}\right]&=\frac{ 1}{\sqrt{2\sigma^2T}}\int e^{\frac{z^2}{2\sigma^2T}+\gamma {\tilde g}(z+y)-\frac{3y^2}{8\sigma^2 T}} e^{-\frac{y^2}{8\sigma^2 T}}dy\\
\notag&=\frac{ 1}{\sqrt{2\sigma^2T}}\int e^{\frac{z^2}{2\sigma^2T}+\gamma {\tilde g}(z+y)+\frac{3y^2}{8\sigma^2 T}} e^{-\frac{7y^2}{8\sigma^2 T}}dy.
\end{align}
Since $0 \leq g'\leq l\leq \frac{1}{2T\sigma^2\gamma}$, if $|g(0)|\leq R$ and $|z|\leq R$ for some $R>0$, then, for all $y\in \R$, we have
\begin{align*}
\frac{z^2}{2\sigma^2T}+\gamma {\tilde g}(z+y)-\frac{3y^2}{8\sigma^2 T}&\leq \frac{z^2}{2\sigma^2T}+\frac{|z+y|^2}{4\sigma^2 T}+R\gamma|z+y|-\frac{3y^2}{8\sigma^2 T}\leq C_R
\end{align*}
and
{\begin{align*}
\frac{z^2}{2\sigma^2T}+\gamma {\tilde g}(z+y)+\frac{3y^2}{8\sigma^2 T}&\geq \frac{z^2}{2\sigma^2T}-\frac{|z+y|^2}{4\sigma^2 T}-R\gamma|z+y|+\frac{3y^2}{8\sigma^2 T}\geq -C_R
\end{align*}}
where $C_R$ is a constant that only depends on $R$ (and eventually on $\gamma,\sigma, T$). Thus, 
 $e^{\frac{z^2}{2\sigma^2T}+\gamma {\tilde g}(z+y)-\frac{3y^2}{8\sigma^2 T}} $ and $((\frac{z}{\sigma^2T}+\gamma  g(z+y))^2+\frac{1}{\sigma^2T}+\gamma  g' (z+y))e^{\frac{z^2}{2\sigma^2T}+\gamma {\tilde g}(z+y)-\frac{3y^2}{8\sigma^2 T}} $ are uniformly bounded by a constant depending only on $R$. 
Thus, $\E\left[e^{\frac{z^2}{2\sigma^2T}+\gamma {\tilde g}(z+\sigma B_T)}\right]$ is finite, {bounded by below by a positive constant depending only on $R$}, and $G_g(z)$ is twice continuously differentiable in $z$. 
  Additionally, we have that 
 \begin{align*} 
G''_g(z)&=\E\left[\left((\frac{z}{\sigma^2T}+\gamma  g(z+\sigma B_T))^2+\frac{1}{\sigma^2T}+\gamma g'(z+\sigma B_T)\right)e^{\frac{z^2}{2\sigma^2T}+\gamma {\tilde g}(z+\sigma B_T)}\right]\\
&\geq \frac{1}{\sigma^2T} \E\left[e^{\frac{z^2}{2\sigma^2T}+\gamma  g(0)(z+\sigma B_T)}\right] \geq \tilde C_{ R} 
\end{align*} 
for a second constant $\tilde C_{R}$. Finally we obtain that for all $g\in \cX_{l}$ 
 $$z\mapsto G_g(z)$$ is strongly convex with convexity constant $\tilde C$ only depending on $g(0)$.

 Let $g^n\in \cX_{l}$ converging to $g^0\in \cX_{l}$ in the sense of \eqref{eq:condconv}. Thus, $\{g^n(0)\}$ is uniformly bounded by some $R>0$. The boundedness of $e^{\frac{z^2}{2\sigma^2T}+\gamma {\tilde g}^n(z+y)-\frac{3y^2}{8\sigma^2 T}}$ and the Lipschitz continuity of the exponential function on sets bounded from above implies that  for all $|z|\leq R$ we have that 
 $$|e^{\frac{z^2}{2\sigma^2T}+\gamma {\tilde g}^n(z+y)-\frac{3y^2}{8\sigma^2 T}}-e^{\frac{z^2}{2\sigma^2T}+\gamma {\tilde g}^0(z+y)-\frac{3y^2}{8\sigma^2 T}}|\leq C_R | {\tilde g}^n(z+y)-{\tilde g}^0(z+y)|$$
 for some constant depending only on $R$. The assumed convergence of $g^n$ to $g^0$ and the fact that ${\tilde g}^n(0)={\tilde g}^0(0)=0$ implies that  $| {\tilde g}^n(z+y)-{\tilde g}^0(z+y)|\to 0$ for all $|z|\leq R$ and $y\in \R$. Thus, by a dominated convergence theorem 
 \begin{align*}
\left| \E\left[e^{\frac{z^2}{2\sigma^2T}+\gamma {\tilde g}^n(z+\sigma B_T)}-e^{\frac{z^2}{2\sigma^2T}+\gamma {\tilde g}^0(z+\sigma B_T)}\right]\right|\to 0. 
 \end{align*} 
 Similarly, 
 \begin{align*}
 &\left|\left(\frac{z}{\sigma^2 T}+\gamma g^n(z+y)\right)e^{\frac{z^2}{2\sigma^2T}+\gamma {\tilde g}^n(z+y)-\frac{3y^2}{8\sigma^2 T}}-\left(\frac{z}{\sigma^2 T}+\gamma g^0(z+y)\right)e^{\frac{z^2}{2\sigma^2T}+\gamma {\tilde g}^0(z+y)-\frac{3y^2}{8\sigma^2 T}}\right|\\
  &\leq C_R\left(\left|\frac{z}{\sigma^2 T}+\gamma g^n(z+y)\right|| {\tilde g}^n(z+y)-{\tilde g}^0(z+y)|+\gamma\left| g^n(z+y)- g^0(z+y)\right|\right).
  \end{align*}
 Since $|g^n(0)| \leq R$, $|z|\leq R$ and $(g^n)'\in [0,l]$, 
 we can bound 
 $$\left|\frac{z}{\sigma^2 T}+\gamma g^n(z+y)\right|\leq C_R(1+e^{\frac{y^2}{16\sigma^2 T}})$$
 so that thanks to the term $e^{-\frac{y^2}{8\sigma^2 T}}$ in \eqref{eq:intG} by a dominated convergence argument, 
 we have 
 $$(G_{g^n}(z),G'_{g^n}(z))\to(G_{g^0}(z),G'_{g^0}(z))$$ 
 uniformly on bounded sets of $z$.
 
  Since $\{g^n(0):n\geq 0\}$ is bounded, the family of functions 
 $$\left\{z\mapsto  \E\left[e^{\frac{z^2}{2\sigma^2T}+\gamma {\tilde g}^n(z+\sigma B_T)}\right]: n\geq 0\right\}$$
 are uniformly strongly convex. We denote $C>0$ a uniform strong convexity constant.  

{\it Step 2: Properties of the minimizers}

By strong convexity we now have that $z_n:=\tilde \chi^{{\tilde g}^n}$ exists as the unique minimizer 
$\argmin_{z\in \R}G_{g^n}(z)$.
By trivial estimates we have that 
\begin{align}\label{eq:boundZ}
&\left|G'_{g^n}(z_0)\right|=\left|\E\left[\left(\frac{z_0}{\sigma^2T}+\gamma g^n(z_0+\sigma B_T)\right)e^{\frac{z_0^2}{2\sigma^2T}+\gamma {\tilde g}^n(z_0+\sigma B_T)}\right]\right|\geq C|z_0-z_n|
\end{align}
As $n\to \infty$
\begin{align*}
&E\left[\left(\frac{z_0}{\sigma^2T}+\gamma  g^n(z_0+\sigma B_T)\right)e^{\frac{z^2_0}{2\sigma^2T}+\gamma {\tilde g}^n(z_0+\sigma B_T)}\right]\\
&\quad \quad \to E\left[\left(\frac{z_0}{\sigma^2T}+\gamma  g^0(z_0+\sigma B_T)\right)e^{\frac{z^2_0}{2\sigma^2T}+\gamma {\tilde g}^0(z_0+\sigma B_T)}\right]\\
&\quad\quad=G'_{g^0}(z_0)=0
\end{align*}
which implies thanks to \eqref{eq:boundZ} that $\tilde \chi^{{\tilde g}^n}\to \tilde \chi^{{\tilde g}^0}$.  This convergence in return easily implies $\tilde \Gamma^{{\tilde g}^n}\to \tilde \Gamma^{{\tilde g}^0}$.

Note also that the equality \eqref{eq:boundZ} with $g^0=0$ (and therefore $z_0=0$) yields that for all $g\in \cX_l$, 
\begin{align}
\left|\E\left[\gamma  g(\sigma B_T)e^{\gamma {\tilde g}(\sigma B_T)}\right]\right|\geq C|\tilde \chi^{{\tilde g}}|.
\end{align}
Therefore, $\tilde \chi^{{\tilde g}}$ is bounded by a constant that only depend on $\gamma,\sigma,T$ and $g(0)$. 
Since $\tilde \chi^{\tilde g}=\argmin_{z\in \R}G_g(z)$ injecting the bound of $\tilde \chi$ to $\tilde \Gamma^{\tilde g}=\min_{z\in \R}\frac{\ln G_g(z)}{\gamma}-\frac{(\chi^{{\tilde g}}(0,0))^2}{2\sigma^2\gamma T}$ we obtain also that 
$\tilde \Gamma^{\tilde g}$ is bounded by a constant depending on $g(0)$ (and eventually on $T,\sigma, \gamma$).

{\it Step 3: Proving $\chi^{\tilde g}(0,0)=\tilde \chi^{\tilde g}$ and $\Gamma^{\tilde g}(0,0)=\tilde \Gamma^{\tilde g}$ for $g\in\cX$ smooth enough.}
Thanks to the PDE \eqref{eq:pdep} and the dynamics of $\xi^{0,{\tilde g}}$ in \eqref{eq:sde}, 
for all ${\tilde g}\in C_{l,\a}$, 
$P(t,\xi^0_t)$ is a martingale and therefore, 
\begin{align*}
P(0,0)&=\E[g(\xi_T^0)]=\int_\R g(y)f_{{\tilde g}}(y)dy\\
&=\E\left[g(\chi^{{\tilde g}}(0,0)+\sigma B_T)e^{\gamma {\tilde g}(\chi^{{\tilde g}}(0,0)+\sigma B_T)-\gamma\Gamma^{\tilde g}(0,0)}\right].
\end{align*}
Given the expression of density $f_{\tilde g}$, \eqref{eq:fphi}, we also have
$$e^{\gamma \Gamma^{\tilde g}(0,0)}=\E\left[e^{\gamma {\tilde g}(\chi^{{\tilde g}}(0,0)+\sigma B_T)}\right].$$
Therefore, \eqref{eq:defzg} shows that $\chi^{{\tilde g}}(0,0)$ solves the equation 
$$\frac{\chi^{{\tilde g}}(0,0)}{\sigma^2 T}=\frac{\E\left[-\gamma g(\chi^{{\tilde g}}(0,0)+\sigma B_T)e^{\gamma {\tilde g}(\chi^{{\tilde g}}(0,0)+\sigma B_T)}\right]}{\E\left[e^{\gamma {\tilde g}(\chi^{{\tilde g}}(0,0)+\sigma B_T)}\right]}.$$
This is equivalent to 
$$G'_{{\tilde g}}(\chi^{{\tilde g}}(0,0))=0.$$
Since $G_{\tilde g}$ is strongly convex and admits $\tilde \chi^{\tilde g}$ as the unique minimizer, we have $\chi^{{\tilde g}}(0,0)=\tilde \chi^{\tilde g}$. {This equality and the identity 
$$e^{\gamma \Gamma^{\tilde g}(0,0)}=\E\left[e^{\gamma {\tilde g}(\chi^{{\tilde g}}(0,0)+\sigma B_T)}\right]=e^{-\frac{(\chi^{{\tilde g}}(0,0))^2}{2\sigma^2T}} G_g(\chi^{{\tilde g}}(0,0))=e^{\gamma \tilde \Gamma^{\tilde g}}.$$
directly implies $\Gamma^{{\tilde g}}(0,0)=\tilde \Gamma^{\tilde g}$ which conclude the proof of the Lemma. }
\end{proof}

\subsection{The Proof of Theorem \ref{thm:fixed}}\label{ss:prooffixed}

We first prove the theorem with Assumption \ref{assum:main} i). We use the Schauder fixed point theorem to prove this statement.  
Due to this assumption, $V(x)-\frac{\kappa x^2}{2}$ is convex. Fix $\gamma \in [0, \gamma_0)=[0,\frac{\sqrt{\kappa}}{4\sqrt{T\sigma^2}})$, $l=\frac{2}{\sqrt{\kappa\sigma^2 T}}$. We endow ${\cX_l}$ with the metric $|g|_{\cX_l}=|g(0)|+|g'|_{L^1(N(0,1))}$ where $|g'|_{L^1(N(0,1))}=\frac{1}{\sqrt{2\pi}} \int |g'(x)|e^{-\frac{x^2}{2}}dx$. Note that with this topology ${\cX_l}$ is a closed and convex subset of a Banach space.

For $\phi$ such that $\pa_\xi\phi\in {\cX_l}$,
we denote $F_\phi(x)=\int_{-\infty}^x f_\phi(y)dy$ (which only depends on $\pa_\xi\phi$ thanks to Lemma \ref{lem:rep}). 
We recall that the fixed point condition is
\begin{align}\label{eq:fixcond}
\textcolor{black}{\pa_{\xi}}\phi (\xi)=F_\nu^{-1}(F_\phi (\xi)).
\end{align}

{\it Step 1: Defining a continuous mapping.}
Recall that for $g\in \cX_l$, we denote $\tilde g$ its antiderivative that is $0$ at $0$. We define the following mapping 
\begin{align*}
M:{\cX_l}&\mapsto {\cX_l}\\
g&\mapsto M_g:=F_\nu^{-1}(F_{\tilde g}(\cdot))
\end{align*}
where
$$f_{\tilde g}(x)=\frac{1}{\sqrt{2\pi \sigma^2 T}}\exp\left(\gamma \tilde g(x)-\gamma \Gamma^{\tilde g}(0,0)-\frac{|\chi^{\tilde g}(0,0)-x|^2}{2\sigma^2T}\right)$$
and where by the Lemma \ref{lem:rep}
$$\chi^{\tilde g}(0,0)=\argmin_{z\in \R}\E\left[e^{\frac{z^2}{2\sigma^2T}+\gamma \tilde g(z+\sigma B_T)}\right],\,\Gamma^{ \tilde g}(0,0)=\min_{z\in \R}\frac{\ln G_g(z)}{\gamma}-\frac{(\chi^{{\tilde g}}(0,0))^2}{2\sigma^2\gamma T}.$$

Since the topology of $ {\cX_l}$ is stronger than the uniform convergence on compact sets, by the Lemma \ref{lem:rep} $\chi^{\tilde g}(0,0)$ and $\Gamma^{\tilde g}(0,0)$ are well defined and are continuous on $ {\cX_l}$. 

We now show that for all $g\in  {\cX_l}$, $M_g\in  {\cX_l}$. By definition, $M_g$ is the Brenier map pushing the measure with density $f_{\tilde g}$ onto $\nu$. 
Thanks to \eqref{eq:fphi} and the choice of $l$, the second derivative of $\ln f_{\tilde g}$ satisfies 
$$-\frac{1}{\sigma^2 T}\leq (\ln f_{\tilde g})''= \gamma g'-\frac{1}{\sigma^2 T}\leq -\frac{1}{2\sigma^2 T}.$$
Thus, we have $-(\ln f_{\tilde g})''\leq \frac{1}{\sigma^2 T}$. Additionally, by Assumption \ref{assum:main} $-(\ln p_\nu)''\geq\kappa$. Since, $M_g$ is the Brenier map pushing $f_{\tilde g}$ onto $p_\nu$,
thanks to the version of {Caffarelli's contraction theorem\footnote{Appendix \ref{s.optt} contains a summary of optimal transport based results needed in the paper.} in \cite[Corollary 6.1]{ko}}, we have
$$\kappa |M_g'|^2\leq \frac{1}{\sigma^2T}$$
and $M_g$ is $\frac{1}{\sqrt{\sigma^2 T \kappa}}$-Lipschitz continuous. By differentiating the definition of $M_g$ we have
$$M_g'(\xi)=\frac{f_{\tilde g}(\xi)}{p_\nu(M_g(\xi))}\geq 0$$ and thus $M_g\in \cX_l$.

We now show that $M$ is continuous. Let $g_n$ converging to $g_0$ in $\cX_l$. Due to the continuity of $\chi(0,0)$ and $\Gamma(0,0)$ and \eqref{eq:fphi2}, 
\begin{align*}
f_{\tilde g_n}(y)= \frac{1}{\sqrt{2\pi \sigma^2 T}}\exp\left(\gamma\int_0^y\int_0^rg'_n(s)dsdr-\gamma B_n-\gamma yA_n-\frac{y^2}{2\sigma^2T}\right)
\end{align*}
where
$A_n=-\frac{\chi^{\tilde g_n}(0,0)}{\gamma \sigma^2 T}-g_n(0)$ and $B_n=\Gamma^{\tilde g_n}(0,0)+\frac{\gamma\sigma^2 T}{2}(g_n(0)+A_n)^2$. 
Thus, 
$f_{\tilde g_n}\to f_{\tilde g_0}$ pointwise.
Additionally, the convergence of $g_n(0)$, the choice of $\gamma$ and $\cX_l$ implies that there exists a constant $C$ independent of $n$, so that 
\begin{align}\label{eq:boundlip}
\frac{1}{C\sqrt{4\pi \sigma^2 T}}\exp\left(-\frac{y^2}{4\sigma^2T}\right) \leq f_{\tilde g_n}(y)\leq   \frac{C}{\sqrt{4\pi \sigma^2 T}}\exp\left(-\frac{y^2}{4\sigma^2T}\right).
\end{align}

An application of the Dominated Convergence Theorem shows that $F_{\tilde g_n} \to F_{\tilde g}$ uniformly and by integrating \eqref{eq:boundlip} we obtain 
\begin{align}\label{eq:bbboundF}
\frac{F_{\sigma \sqrt{2T}}(y)}{C}\leq F_{\tilde g_n}(y)\leq  1- \frac{F_{\sigma \sqrt{2T}}(-y)}{C}.
\end{align} 
where $F_{\sigma \sqrt{2T}}$ is the cumulative distribution function of the normal distribution with variance $2\sigma^2 T$. 
Thus, we finally obtain that 
$M_{g_n}(y):=F_\nu^{-1}(F_{\tilde g_n}(y))$
converges pointwise to $M_{g_0}$. 

 Taking the derivative of its definition 
$$M'_{g_n}(y)=\frac{f_{\tilde g_n}(y)}{p_\nu(M_{g_n}(y))}$$
Thus, $M'_{g_n}$ converges pointwise to $M'_{g_0}$. Additionally, $ M'_{g_n}$ is bounded by $\frac{1}{\sqrt{\sigma^2 T \kappa}}$. Thus, by a simple Dominated convergence, 
$|M_{g_n}-M_g|_{\cX_l}$ goes to $0$ which is the convergence we wanted.

{\it Step 2: Image of bounded sets is relatively compact.}

We now show that image of bounded sets of $\cX_l$ are relatively compact sets. Let $g_n\in \cX_l$ be a bounded sequence.
Thus, similarly as above, $f_{\tilde g_n}$ is uniformly bounded and therefore $F_{\tilde g_n}$ is uniformly Lipschitz continuous and satisfy \eqref{eq:bbboundF} for a constant only depending on the bound of $\{|g_n|_{\cX_l}:n\}$.
Note that \eqref{eq:bbboundF} implies that 
\begin{align*}
F_\nu^{-1}\left(\frac{F_{\sigma \sqrt{2T}}(y)}{C}\right)\leq M_{\tilde g_n}(y)\leq  F_\nu^{-1}\left(1- \frac{F_{\sigma \sqrt{2T}}(-y)}{C}\right).
\end{align*} 
 Thus, given the assumption on $p_\nu$ and the boundedness of $f'_{\tilde g_n}$ on compact sets of $y$ uniformly in $n$, we have that $M_{g_n}$ are equicontinuous on compact sets of $y$. The family is also bounded. By Arzela Ascoli Theorem, there exists a subsequence of $g_n$ which we still denote $g_n$ so that the convergence
$$M'_{g_n}(\xi)\to h(\xi)\in [0,\frac{2}{\sqrt{\kappa \sigma^2 T}}]$$
is locally uniformly for some continuous function $h$. 
For $r\in \R$ to be determined, define 
$g_{h,r}(\xi)=r+\int_0^\xi h(x)dx\in \cX_l$. 
Note that thanks to Proposition \ref{prop:f}, 
$f_{\tilde g_{h,r}}$ (and $F_{\tilde g_{h,r}}$) in fact does not depend on $r$. 
Therefore, we can define 
$r_0:=M_{g_{h,r}}(0)$ and $ g^*(\xi):=g_{h,r_0}(\xi)$. 

Using one more time the fact that $F_{\tilde g}$ only depends on $g'$, and the fact that uniform convergence on compact sets is stronger than convergence in $L^1(N(0,1))$, we have that
$$M_{g_n}(0)\to r_0=g^*(0).$$
Thus, $M_{g_n}\to g^*$ in $\cX_l$
which is the compactness needed. 

Applying the Schauder fixed point theorem, we have the existence of $g$ satisfying 
\begin{align}\label{eq:fp1}
g(\xi)=F_\nu^{-1}(F_{\tilde g}(\xi)).
\end{align}

In order to finish the proof of the Lemma we need to improve the regularity of $g$ and show that $\tilde g\in C_{l,\a}$. 
We already have that $g$ is $\frac{1}{\sqrt{\sigma^2 T \kappa}}$-Lipschitz continuous. The equality \eqref{eq:fp1} yields that $g$ is continuously differentiable and its derivative satisfies
\begin{align}
g' (\xi)=\frac{f_{\tilde g}(\xi)}{p_\nu(g(\xi))}=\frac{1}{\sqrt{2\pi \sigma^2T}}e^{\gamma\tilde g(\xi)-\Gamma^{\tilde g}-\frac{|\chi^{\tilde g}-\xi|^2}{2\sigma^2 T}+V(g(\xi))}
\end{align}
and is bounded by $\frac{1}{\sqrt{\sigma^2 T \kappa}}$. Taking three more derivatives we have that $\tilde g\in C_{l,\a}$. 

We now prove the theorem with the Assumption \ref{assum:main} (ii). Under this assumption we cannot rely on \cite[Corollary 6.1]{ko}. 
However, due to the boundedness of the support of $\nu$, 
the mapping $M_g$ is bounded. Thus, one can easily show that
$$M'_{g}(\xi)=\frac{f_{\tilde g}(\xi)}{p_\nu(M_{g}(\xi))}$$
is uniformly bounded by constant that only depends on the support of $\nu$ and $\inf\{p_\nu(x):p_\nu(x)>0\}$. 

Thus, choosing $l$ large enough to ensure $M_g\in \cX_l$ and $\gamma>0$ small enough to ensure $\gamma l\in (0,\frac{1}{2\sigma^2 T})$, we can use the same fixed point argument as above.
 \begin{proof}[Proof of Lemma \ref{lem:measurability}]
  By the Lemma \ref{lem:quasi}, $P$ is Lipschitz continuous and $1-\gamma\sigma^2T l \leq \textcolor{black}{\pa_{\xi}}\chi\leq 1$. Thus, $\chi(t,\cdot)$ is invertible and its inverse is uniformly Lipschitz continuous in $\xi$. $P$ is also uniformly Lipschitz continuous in $\xi$.
  Therefore, by classical arguments, for any realization of the path of $(t,y)\in \Lambda$, one can establish the existence and uniqueness of a fixed point for the mapping
  $$(u(t))_{\{t\in [0,T]\}}\mapsto \left(\chi(0,0)+y(t)+\int_0^t {\gamma\sigma^2}P (r,u(r))dr\right)_{\{t\in [0,T]\}}$$
 which is the process $\xi$. 
 
 Additionally, by a direct application of the Gronwall's lemma, for two  $(t,y^1),(t,y^2)\in \Lambda$, the fixed points $\xi^1$ and $\xi^2$ satisfy
 $$|\xi^1(t)-\xi^2(t)|\leq C|y^1(t)-y^2(t)|+C\int_0^t|y^1(s)-y^2(s)|ds$$
 for some constant $C$. {Therefore $(t,y)\in \Lambda\mapsto\xi_t(y)$ is a continuous functional. It is also clear that $Y(t)$ is a continuous functional of $\{\xi(s)\}_{s\leq t}$.} Thus, both semimartingales $Y$ and $\xi(t,Y_\cdot)$ generate the same filtration. 
 
 Note that \eqref{eq:dynxi} yields that
$$\xi(t,y_\cdot)=\chi^{-1}\left(t,\chi(0,0)+y(t)+\int_0^t {\gamma\sigma^2}P (r,\xi(r,y_\cdot))dr\right).$$
Thus, using the regularity of $\chi$ and its inverse one can easily show that $\xi\in C^{1,2}(\Lambda)$ and a simple differentiation of $\xi(t,y_\cdot)$ in $y(t)$ and the fact that $\pa_y(\int_0^t {\gamma\sigma^2}P (r,\xi(r,y_\cdot))dr)=0$ yields
\begin{align}\label{eq:pathder}
\textcolor{black}{\pa_{y}}\xi(t,y_\cdot)=\frac{1}{\textcolor{black}{\pa_{\xi}}\chi(t,\xi(t,y_\cdot))}.
\end{align}
\end{proof}

%%%%%%%%%%%%%%%%%%%%%%%%%%%%%%%%%%%%%%%%%%%%%%
%% Example with single Appendix:            %%
%%%%%%%%%%%%%%%%%%%%%%%%%%%%%%%%%%%%%%%%%%%%%%

\begin{appendix}
\numberwithin{equation}{subsection}
\section{Optimal transport}\label{s.optt}
We recall some well-known results from optimal transport theory. 
Let $\mu$ and $\nu$ be two absolutely continuous distributions on $\R$ with finite second moments. We say that a measurable map $f:\R\mapsto \R$, pushes $\mu$ forward to $\nu$ if for all Borel measurable subset $B$ of $\R$, we have 
$\nu(B)=\mu(f^{-1}(B))$. 

Thanks to the Brenier's theorem in \cite{b,m}, up to an additive constant, there exists a unique convex function $\phi$ so that $\pa_\xi \phi$ pushes $\mu$ forward to $\nu$. In this special case of one dimension $\pa_\xi \phi(x)=F_\nu^{-1}(F_\mu(x))$ for all $x\in \R$. We call the function $\phi$ the Brenier potential and $\pa_\xi \phi$ the Brenier map. 

Additionally, defining the convex conjugate $\phi^c(v)=\sup_\xi v\xi-\phi(\xi)$ of $\phi$, $\phi^c$ is also the unique convex function (up to an additive constant) whose derivative pushes $\nu$ forward to $\mu$. We also have $\pa_v(\phi^c)(v)=(\pa_\xi \phi)^{-1}(v)=F_\mu^{-1}(F_\nu(v))$ where the first equality holds in any dimension and the last equality is particular to the one dimensional transport problems. 

Assume that $\mu(dx)=e^{-W(x)}dx$ and $\nu=e^{-V(x)}dx$ where $V$ is $\kappa$ strongly convex in the sense that $V(x)-\frac{\kappa x^2}{2}$ is a convex function and $W$ twice differentiable. 
Then, thanks to \cite[Corollary 6.1]{ko}, the following second derivative estimate holds $\mu$-a.s
\begin{align}\label{ko:caf}
0\leq (\pa_{\xi\xi} \phi)^2\leq\frac{1}{\kappa} \inf\{y\in \R: y\geq (W'')_+,\, \mu-\mbox{ a.s.} \}.
\end{align}

\section{Auxiliary Results}\label{appn} 
{
\begin{prop}
{Assume Assumption \ref{assum:main}, and fix $\gamma\in (0,\gamma_1) $ as in Theorem \ref{thm:eq}. Then, any continuous semimartingale $X$, with $X_0=0$ satisfying the boundedness from below assumption of the realized gains \eqref{eq:bb} is admissible. }
\end{prop}

\begin{proof}
We now pick $X$ as above and assume that \eqref{eq:bb} holds. Proceeding as in the Proof of Theorem \ref{thm:eq} and instead of integrating the dynamics of $-\gamma d(\tilde v \chi^*(t,\xi_t)-\Gamma^*(t,\xi_t))$ and $
\gamma d\langle P,X\rangle_t$ on $[0,T]$ to obtain \eqref{eq:dcmp1}, we integrate them on $[0,t]$ to obtain that 
\begin{align}\notag
&e^{\int_0^t\gamma(\tilde v-P_s)\sigma dB_s-\int_0^t\frac{\gamma^2\sigma^2}{2}(\tilde v-P_s)^2 ds}\\
&=e^{-\gamma W_t-\gamma(\tilde v \chi^*(0,0)-\Gamma^*(0,0))+\gamma (\tilde v \chi^*(t,\xi_t)-\Gamma^*(t,\xi_t))-\frac{\tilde v^2\gamma^2\sigma^2t}{2}-\int_0^t\frac{\gamma\a_s^2{\pa_{\xi}}P^*(t,\xi_s)}{2{\pa_{\xi}}\chi^*(s,\xi_s)}ds}\notag\\
&\leq e^{-\gamma C(\tilde v)-\gamma(\tilde v \chi^*(0,0)-\Gamma^*(0,0)) +\gamma \sup_{(t,\xi)\in[0,T]\times \R}(\tilde v \chi^*(t,\xi)-\Gamma^*(t,\xi))}\label{eq:boundmg}
\end{align}
where as in \eqref{eq:bb}, $W_t=\int_0^t (\tilde v-P_s)dX_s-\langle X,P\rangle_t=(\tilde v-P_t)X_t+\int_0^t X_s dP_s$ is the realized gains up to time $t$ and $C(\tilde v)$ is the lower bound for this gain. 

We now show the weak maximum principle  
$$ \sup_{(t,\xi)\in[0,T]\times \R}(\tilde v \chi^*(t,\xi)-\Gamma^*(t,\xi))= \sup_{\xi\in \R}(\tilde v \chi^*(T,\xi)-\Gamma^*(T,\xi))= \sup_{\xi \in \R}(\tilde v \xi-\phi^*(\xi))=\phi^c(\tilde v)$$
for the function $(t,\xi)\mapsto \tilde v \chi^*(t,\xi)-\Gamma^*(t,\xi)$.
Define the function $\tilde \Gamma (t,\chi)=\Gamma^*(t,\chi^{-1}(t,\chi))$ where $\chi^{-1}(t,\cdot)$ is the inverse mapping of $\chi^*(t,\cdot)$. By differentiation we have that $$\pa_\chi \tilde \Gamma(t,\chi)=\frac{\pa_\xi \Gamma^*(t,\chi^{-1}(t,\chi))}{\pa_\xi\chi^*(t,\chi^{-1}(t,\chi))}=P(t,\chi^{-1}(t,\chi))$$
which is strictly increasing as a consequence of \eqref{eq:lip}.
Thus, $\tilde \Gamma$ is strictly convex in $\chi$. 
and for a given $t\in [0,T]$, the first order optimality condition and this convexity shows that  
$\sup_\xi\tilde v \chi^*(t,\xi)-\Gamma^*(t,\xi)=\sup_\chi(\tilde v\chi-\tilde \Gamma(t,\chi))$
is uniquely achieved at strict maximum $\xi$ satisfying $P^*(t,\xi)=\tilde v$. 
Assume now that $$\sup_{(t,\xi)\in[0,T]\times \R}\left(\tilde v \chi^*(t,\xi)-\Gamma^*(t,\xi)\right)=\sup_{t}\left(\tilde v \chi^*(t,(P^*)^{-1}(t,\tilde v))-\Gamma^*(t,(P^*)^{-1}(t,\tilde v))\right)$$
is achieved at $t<T$.  
Denoting $\xi^*=(P^*)^{-1}(t,\tilde v)$, the optimality at $(t,\xi^*)$ and strong convexity leads to $\pa_t (\tilde v \chi^*-\Gamma^*)(t,\xi^*)\leq 0$ and  $\pa^2_{\xi\xi} (\tilde v \chi^*-\Gamma^*)(t,\xi^*)< 0$. 
Injecting these inequalities and the equality $P^*(t,\xi^*)=\tilde v$ to \eqref{eq:system} we obtain 
\begin{align*}
 0&=\pa_t(\tilde v \chi^*-\Gamma^*)(t,\xi^*)+\frac{\sigma^2}{2(\pa_\xi\chi^*(t,\xi^*))^2}\pa_{\xi\xi}(\tilde v \chi^*-\Gamma^*)(t,\xi^*)-\frac{\gamma\sigma^2P^*(t,\xi^*)}{2}(2\tilde v-P^*(t,\xi^*))\\
 &<0-\frac{\gamma\sigma^2P^*(t,\xi^*)}{2}(2\tilde v-P^*(t,\xi^*))=-\frac{\gamma\sigma^2\tilde v^2}{2}.
 \end{align*}
We obtain a contradiction which implies the weak maximum principle 
$$\sup_{(t,\xi)\in[0,T]\times \R}\left(\tilde v \chi^*(t,\xi)-\Gamma^*(t,\xi)\right)=\phi^c(\tilde v)$$
as claimed. 
Thanks to \eqref{eq:boundmg}, we have the upper bound 
\begin{align*}
\sup_{t\in[0,T]}e^{\int_0^t\gamma(\tilde v-P_s)\sigma dB_s-\int_0^t\frac{\gamma^2\sigma^2}{2}(\tilde v-P_s)^2 ds}\leq \tilde C(\tilde v)
\end{align*}
for a finite deterministic function $\tilde C$. Since $\tilde v\in \cF_0$, 
the $\cF$-local martingale 
$$t\mapsto \frac{e^{\int_0^t\gamma(\tilde v-P_s)\sigma dB_s-\int_0^t\frac{\gamma^2\sigma^2}{2}(\tilde v-P_s)^2 ds}}{1+|\tilde C(\tilde v)|}$$
is uniformly bounded by $1$ and therefore is a $\cF$ martingale. 
This, easily implies that
$$t\mapsto {e^{\int_0^t\gamma(\tilde v-P_s)\sigma dB_s-\int_0^t\frac{\gamma^2\sigma^2}{2}(\tilde v-P_s)^2 ds}}$$
is a $\cF$-martingale
and
 for all $t\in[0,T]$, 
\begin{align*}
&\E\left[e^{\gamma\sigma\int_0^t(\tilde v- H(s,X_\cdot+Z_\cdot))dB_s-\frac{\gamma^2\sigma^2}{2}\int_0^t(\tilde v- H(s,X_\cdot+Z_\cdot))^2ds}|\cF_0\right]=1.
\end{align*}
This concludes the proof of the admissibility of $X$.

\end{proof}
}
\subsection{The PDE for $\Gamma$}\label{appA1}
 Since the computation is straightforward for $\chi$, we now show that the function $\Gamma$ defined by \eqref{eq:constintg} satisfies
\begin{align*}
 \textcolor{black}{\pa_{t}}\Gamma(t,\xi)+\frac{\sigma^2}{2(\textcolor{black}{\pa_{\xi}}\chi(t,\xi))^2}\textcolor{black}{\pa_{\xi\xi}}\Gamma(t,\xi)-\frac{\gamma\sigma^2P^2(t,\xi)}{2}=0.
\end{align*}
From Lemma \ref{lem:pdep}, we have,
\begin{align}
\textcolor{black}{\pa_{t}} P(t,\xi)=&\frac{-\sigma^2\textcolor{black}{\pa_{\xi\xi}}P(t,\xi)}{2(1-\gam\sigma^2\textcolor{black}{\pa_{\xi}}P(t,\xi)(T-t))^2}\\=&\frac{-\sigma^2}{2\gam\sigma^2(T-t)}\pa_\xi\Big(\frac{1}{1-\gamma\sigma^2 (T-t)\textcolor{black}{\pa_{\xi}}P (t,\xi)}\Big)
 \end{align}
 From Equation \eqref{eq:defzg}, we have,
 \begin{align}
\textcolor{black}{\pa_{\xi}}\chi(t,\xi)= 1-\gam\sigma^2(T-t) \textcolor{black}{\pa_{\xi}}P(t,\xi)
\end{align}
 Similarly from Equation \eqref{eq:constintg}, we have,
\begin{align*}
\textcolor{black}{\pa_{\xi}}\Gam(t,\xi)&=P(t,\xi)-\gam\sigma^2(T-t)P(t,\xi)\textcolor{black}{\pa_{\xi}}P(t,\xi)\\
\textcolor{black}{\pa_{\xi\xi}}\Gam(t,\xi)&=\textcolor{black}{\pa_{\xi}}P(t,\xi)-\gam\sigma^2(T-t)\big[P(t,\xi)\textcolor{black}{\pa_{\xi\xi}}P(t,\xi)+\textcolor{black}{(\pa_{\xi}P)^2}(t,\xi)\big]
\end{align*}
and therefore,
\begin{align*}
\frac{\sigma^2\textcolor{black}{\pa_{\xi\xi}}\Gam(t,\xi)}{2\textcolor{black}{(\pa_{\xi}\chi)}^2(t,\xi)}=\frac{\sigma^2\textcolor{black}{\pa_{\xi}}P(t,\xi)}{2\textcolor{black}{(\pa_{\xi}\chi)}^2(t,\xi)}-\frac{\gam\sigma^4(T-t)}{2\textcolor{black}{(\pa_{\xi}\chi)}^2(t,\xi)}\big[\textcolor{black}{(\pa_{\xi}}P)^2(t,\xi)+P(t,\xi)\textcolor{black}{\pa_{\xi\xi}}P(t,\xi)\big].
\end{align*}

By differentiating \eqref{eq:constintg}, we also have
\begin{align*}
\textcolor{black}{\pa_{t}}\Gam(t,\xi)=& \frac{-\sigma^2\textcolor{black}{\pa_{\xi}}P(t,0)}{2(1-\gam\sigma^2(T-t)\textcolor{black}{\pa_{\xi}}P(t,0))}-\frac{1}{2\gam(T-t)}\int_0^\xi\pa_\xi\Big(\frac{1}{1-\gam\sigma^2(T-t)\textcolor{black}{\pa_{\xi}}P(t,r)}\Big)dr\\&+\frac{\gam\sigma^2P^2(t,\xi)}{2}-\gam\sigma^2(T-t)P(t,\xi)\textcolor{black}{\pa_{t}}P(t,\xi)\\
=& \frac{-\sigma^2\textcolor{black}{\pa_{\xi}}P(t,0)}{2(1-\gam\sigma^2(T-t)\textcolor{black}{\pa_{\xi}}P(t,0))}\\
&-\frac{1}{2\gam(T-t)}\Big[\frac{1}{1-\gam\sigma^2(T-t)\textcolor{black}{\pa_{\xi}}P(t,\xi)}-\frac{1}{1-\gam\sigma^2(T-t)\textcolor{black}{\pa_{\xi}}P(t,0)}\Big]\\&+\frac{\gam\sigma^2P^2(t,\xi)}{2}-\gam\sigma^2(T-t)P(t,\xi)\textcolor{black}{\pa_{t}}P(t,\xi)\\
=& \frac{1}{2\gam(T-t)}\Big[1-\frac{1}{1-\gam\sigma^2(T-t)\textcolor{black}{\pa_{\xi}}P(t,\xi)}\Big]+\frac{\gam\sigma^2P^2(t,\xi)}{2}\\
&+\gam\sigma^2(T-t)P(t,\xi)\frac{\sigma^2}{2\textcolor{black}{(\pa_{\xi}\chi)}^2(t,\xi)}\textcolor{black}{\pa_{\xi\xi}}P\\
=& \frac{-\sigma^2\textcolor{black}{\pa_{\xi}}P(t,\xi)}{2\textcolor{black}{\pa_{\xi}}\chi(t,\xi)}+\frac{\gam\sigma^2P^2(t,\xi)}{2}+\frac{\gam\sigma^4(T-t)P(t,\xi)\textcolor{black}{\pa_{\xi\xi}}P}{2\textcolor{black}{(\pa_{\xi}\chi)}^2(t,\xi)}\\
\end{align*}
Plugging in the values in Equation \eqref{eq:system}, we have,
\begin{align*}
&\textcolor{black}{\pa_{t}}\Gam(t,\xi)+\frac{\sigma^2\textcolor{black}{\pa_{\xi\xi}}\Gam(t,\xi)}{2\textcolor{black}{(\pa_{\xi}\chi)}^2(t,\xi)}-\frac{\gam\sigma^2P^2(t,\xi)}{2}\\
=&\frac{-\sigma^2\textcolor{black}{\pa_{\xi}}P(t,\xi)}{2\textcolor{black}{\pa_{\xi}}\chi(t,\xi)}+\frac{\gam\sigma^2P^2(t,\xi)}{2}+\frac{\gam\sigma^4(T-t)P(t,\xi)\textcolor{black}{\pa_{\xi\xi}}P(t,\xi)}{2\textcolor{black}{(\pa_{\xi}\chi)}^2(t,\xi)}\\&+\frac{\sigma^2\textcolor{black}{\pa_{\xi}}P(t,\xi)}{2\textcolor{black}{(\pa_{\xi}\chi)}^2(t,\xi)}-\frac{\gam\sigma^4(T-t)}{2\textcolor{black}{(\pa_{\xi}\chi)}^2(t,\xi)}\big[P(t,\xi)\textcolor{black}{\pa_{\xi\xi}}P(t,\xi)+\textcolor{black}{(\pa_{\xi}P)}^2(t,\xi)\big] -\frac{\gam\sigma^2P^2(t,\xi)}{2}\\
=&\frac{\sigma^2\textcolor{black}{\pa_{\xi}}P(t,\xi)}{2\textcolor{black}{(\pa_{\xi}\chi)}^2(t,\xi)}\big[1-\textcolor{black}{\pa_{\xi}}\chi(t,\xi)\big]-\frac{\gam\sigma^4(T-t)}{2\textcolor{black}{(\pa_{\xi}\chi)}^2(t,\xi)}\textcolor{black}{(\pa_{\xi}P)}^2(t,\xi)\\
=&\frac{\sigma^2}{2\textcolor{black}{(\pa_{\xi}\chi)}^2(t,\xi)}\big[\textcolor{black}{\pa_{\xi}}P(t,\xi)-\textcolor{black}{\pa_{\xi}}P(t,\xi)\textcolor{black}{\pa_{\xi}}\chi(t,\xi)-\gam\sigma^2(T-t)\textcolor{black}{(\pa_{\xi}P)}^2(t,\xi)\big]\\
=&\frac{\sigma^2}{2\textcolor{black}{(\pa_{\xi}\chi)}^2(t,\xi)}\big[\textcolor{black}{\pa_{\xi}}P(t,\xi)-\textcolor{black}{\pa_{\xi}}P(t,\xi)\big(1-\gam\sigma^2(T-t) \textcolor{black}{\pa_{\xi}}P(t,\xi)\big)-\gam\sigma^2(T-t)\textcolor{black}{(\pa_{\xi}P)}^2(t,\xi)\big]= 0.
\end{align*}
\subsection{Price impact dynamics}\label{appA2}
Recall that
\begin{align}\label{eq:lambda}
\lambda(t,\xi_t^0)=\frac{\textcolor{black}{\pa_{\xi}}P(t,\xi_t^0)}{\textcolor{black}{\pa_{\xi}}\chi(t,\xi_t^0)}
\end{align}
We also have,
\begin{align}
\xi^0_t=\int_0^t\frac{\sigma}{(1-\gamma\sigma^2 (T-r)\textcolor{black}{\pa_{\xi}}P (r,\xi^0_r))} dB_r=\int_0^t\frac{\sigma}{\textcolor{black}{\pa_{\xi}}\chi (r,\xi^0_r)} dB_r
\end{align}
From \eqref{eq:sde}, we have:
\begin{align}
d\xi_t^0=\frac{\sigma}{\textcolor{black}{\pa_{\xi}}\chi(t,\xi^0_t)} dB_t
\end{align}
and
\begin{align}
d\langle\xi_t^0\rangle=\frac{\sigma^2}{\textcolor{black}{(\pa_{\xi}\chi)}^2 (t,\xi^0_t)} dt
\end{align}
Applying Ito's formula to $\lambda(t,\xi_t^0)$ in Equation\eqref{eq:lambda}, we get the following:
\begin{align}\label{eq:ito}
d\lambda(t,\xi_t^0)=&\textcolor{black}{\pa_{t}}\lambda(t,\xi_t^0)dt + \textcolor{black}{\pa_{\xi}}\lambda(t,\xi_t^0)d\xi_t^0 + \frac{1}{2}\textcolor{black}{\pa_{\xi\xi}}\lambda(t,\xi_t^0) d\langle\xi_t^0\rangle\notag\\
=&\Big[\textcolor{black}{\pa_{t}}\lambda(t,\xi_t^0)+\frac{\sigma^2}{2\textcolor{black}{(\pa_{\xi}\chi)}^2 (t,\xi^0_t)}\textcolor{black}{\pa_{\xi\xi}}\lambda(t,\xi_t^0)\Big]dt+\Big[\frac{\sigma}{\textcolor{black}{\pa_{\xi}}\chi(t,\xi^0_t)}\textcolor{black}{\pa_{\xi}}\lambda(t,\xi_t^0)\Big] dB_t
\end{align}
Here $\textcolor{black}{\pa_{\xi}}P=R$ satisfies \eqref{eq:pder}. By a direct computation, 
\begin{align}
\textcolor{black}{\pa_{t}}\lambda(t,\xi_t^0)=&\frac{\pa}{\pa t}\Big[\frac{\textcolor{black}{\pa_{\xi}}P(t,\xi_t^0)}{\textcolor{black}{\pa_{\xi}}\chi(t,\xi_t^0)}\Big]=\frac{ \textcolor{black}{\pa_{t}}R(t,\xi_t^0)-\gamma\sigma^2 R^2(t,\xi_t^0)}{\Big[1-\gamma\sigma^2(T-t)R(t,\xi_t^0)\Big]^2}\label{eq:l_t}
\\\notag\\
\textcolor{black}{\pa_{\xi}}\lambda(t,\xi_t^0)=&\frac{\pa}{\pa \xi}\Big[\frac{\textcolor{black}{\pa_{\xi}}P(t,\xi_t^0)}{\textcolor{black}{\pa_{\xi}}\chi(t,\xi_t^0)}\Big]=\frac{\textcolor{black}{\pa_{\xi}}R(t,\xi_t^0)}{\Big[1-\gamma\sigma^2(T-t)R(t,\xi_t^0)\Big]^2}\label{eq:l_xi}\\\notag\\
\textcolor{black}{\pa_{\xi\xi}}\lambda(t,\xi_t^0)=&\frac{\pa}{\pa \xi}\bigg[\frac{\textcolor{black}{\pa_{\xi}}R(t,\xi_t^0)}{\textcolor{black}{(\pa_{\xi}\chi)}^2(t,\xi_t^0)}\bigg]\notag\\
=&\frac{1}{\textcolor{black}{(\pa_{\xi}\chi)}^3(t,\xi_t^0)}\Big[\textcolor{black}{\pa_{\xi}}\chi(t,\xi_t^0)\textcolor{black}{\pa_{\xi\xi}}R(t,\xi_t^0)+2\gamma\sigma^2(T-t)\textcolor{black}{(\pa_{\xi}R)}^2(t,\xi_t^0)\Big]\label{eq:l_xixi}
\end{align}
Injecting Equations \eqref{eq:l_t}, \eqref{eq:l_xi} and  \eqref{eq:l_xixi} in Equation\eqref{eq:ito}, we get,
\begin{align}
d\lambda(t,\xi_t^0)=&\bigg[\frac{ \textcolor{black}{\pa_{t}}R(t,\xi_t^0)-\gamma\sigma^2 R^2(t,\xi_t^0)}{\textcolor{black}{(\pa_{\xi}\chi)}^2(t,\xi_t^0)}\notag\\
&+\frac{\sigma^2}{\textcolor{black}{(\pa_{\xi}\chi)}^2(t,\xi_t^0)}.\frac{1}{\textcolor{black}{(\pa_{\xi}\chi)}^3(t,\xi_t^0)}\Big(\textcolor{black}{\pa_{\xi}}\chi(t,\xi_t^0)\textcolor{black}{\pa_{\xi\xi}}R(t,\xi_t^0)+2\gamma\sigma^2(T-t)\textcolor{black}{(\pa_{\xi}R)}^2(t,\xi_t^0)\Big)\bigg]dt\notag\\
&+\frac{\sigma}{\textcolor{black}{\pa_{\xi}}\chi(t,\xi_t^0)}.\frac{\textcolor{black}{\pa_{\xi}}R(t,\xi_t^0)}{\textcolor{black}{(\pa_{\xi}\chi)}^2(t,\xi_t^0)} dB_t\notag\\
=&\Big[\textcolor{black}{\pa_{t}} R(t,\xi_t^0)-\gamma\sigma^2 R^2(t,\xi_t^0)+\frac{\sigma^2\textcolor{black}{\pa_{\xi\xi}}R(t,\xi_t^0)}{2\textcolor{black}{(\pa_{\xi}\chi)}^2(t,\xi_t^0)}+\frac{\gamma\sigma^4(T-t)\textcolor{black}{(\pa_{\xi}R)}^2(t,\xi_t^0)}{\textcolor{black}{(\pa_{\xi}\chi)}^3(t,\xi_t^0)}\Big]\frac{dt}{\textcolor{black}{(\pa_{\xi}\chi)}^2(t,\xi_t^0)}\notag\\&+\frac{\sigma \textcolor{black}{\pa_{\xi}}R(t,\xi_t^0)}{\textcolor{black}{(\pa_{\xi}\chi)}^3(t,\xi_t^0)} dB_t
\end{align}
Using Equation \eqref{eq:pder}, we finally get,
\begin{align*}
d\lambda(t,\xi_t^0)&=\frac{-\gamma\sigma^2 R^2(t,\xi_t^0)}{\textcolor{black}{(\pa_{\xi}\chi)}^2(t,\xi_t^0)} dt + \frac{\sigma \textcolor{black}{\pa_{\xi}}R(t,\xi_t^0)}{\textcolor{black}{(\pa_{\xi}\chi)}^3(t,\xi_t^0)} dB_t\\
&=-\gamma\sigma^2 \lambda^2(t,\xi_t^0) dt + \frac{\sigma \textcolor{black}{\pa_{\xi}}R(t,\xi_t^0)}{\textcolor{black}{(\pa_{\xi}\chi)}^3(t,\xi_t^0)} dB_t
\end{align*}

\subsection{Market depth dynamics}\label{appA3}
The price impact is defined as
\begin{align}
\lambda(t,\xi)=\frac{\textcolor{black}{\pa_{\xi}}P(t,\xi)}{\textcolor{black}{\pa_{\xi}}\chi(t,\xi)}.
\end{align}
Let, $\zeta$ be the market depth 
\begin{align}\label{eq:zeta}
\zeta(t,\xi_t)=\frac{1}{\lambda(t,\xi_t)}=\frac{\textcolor{black}{\pa_{\xi}}\chi(t,\xi_t)}{\textcolor{black}{\pa_{\xi}}P(t,\xi_t)}
\end{align}
From the problem formulation, we have,
\begin{align}
    dY_t=&dX_t+dZ_t\notag\\
    =&\theta^{\tilde v}(t,\xi_t)dt +\sigma dB_t\label{eq:dY}\\
    d\langle Y\rangle_t=&\sigma^2 dt\label{eq:dYdY}
\end{align}
Thus plugging Equation\eqref{eq:dY} and \eqref{eq:dYdY} into Equation\eqref{eq:dxi}, we get,
\begin{align}
    d\xi_t=\frac{dY_t}{\textcolor{black}{\pa_{\xi}}\chi(t,\xi_t)}
\end{align}and
\begin{align}
d\langle \xi\rangle_t=&\frac{1}{\textcolor{black}{(\pa_{\xi}\chi)}^2(t,\xi_t)}d\langle Y\rangle_t\notag\\
=&\frac{\sigma^2 dt}{\textcolor{black}{(\pa_{\xi}\chi)}^2(t,\xi_t)}
\end{align}
Applying Ito's formula to $\zeta(t,\xi_t)$ in Equation\eqref{eq:zeta}, we get the following:
\begin{align}\label{eq:itos}
d\zeta(t,\xi_t)=&\textcolor{black}{\pa_{t}}\zeta(t,\xi_t)dt + \textcolor{black}{\pa_{\xi}}\zeta(t,\xi_t)d\xi_t + \frac{1}{2}\textcolor{black}{\pa_{\xi\xi}}\zeta(t,\xi_t) d\langle\xi\rangle_t\notag\\
=&\Big[\textcolor{black}{\pa_{t}}\zeta(t,\xi_t)+\frac{\sigma^2}{2\textcolor{black}{(\pa_{\xi}\chi)}^2(t,\xi_t)}\textcolor{black}{\pa_{\xi\xi}}\zeta(t,\xi_t)\Big]dt+\Big[\frac{1}{\textcolor{black}{\pa_{\xi}}\chi (t,\xi_t)}\textcolor{black}{\pa_{\xi}}\zeta(t,\xi_t)\Big] dY_t
\end{align}
Here $\textcolor{black}{\pa_{\xi}}P=R$ which satisfies \eqref{eq:pder}. By a direct computation, 
\begin{align}
\textcolor{black}{\pa_{t}}\zeta(t,\xi_t)=&\frac{\pa}{\pa t}\Big[\frac{\textcolor{black}{\pa_{\xi}}\chi(t,\xi_t)}{R(t,\xi_t)}\Big]=\frac{\gamma\sigma^2R^2(t,\xi_t)- \textcolor{black}{\pa_{t}}R(t,\xi_t)}{R^2(t,\xi_t)}\label{eq:z_t}\\
\textcolor{black}{\pa_{\xi}}\zeta(t,\xi_t)=&\frac{\pa}{\pa \xi}\Big[\frac{\textcolor{black}{\pa_{\xi}}\chi(t,\xi_t)}{R(t,\xi_t)}\Big]=\frac{-\textcolor{black}{\pa_{\xi}}R(t,\xi_t)}{R^2(t,\xi_t)}\label{eq:z_xi}\\
\textcolor{black}{\pa_{\xi\xi}}\zeta(t,\xi_t)=&\frac{\pa}{\pa \xi}\Big[\frac{-\textcolor{black}{\pa_{\xi}}R(t,\xi_t)}{R^2(t,\xi_t)}\Big]=\frac{2\textcolor{black}{(\pa_{\xi}R)}^2(t,\xi_t)-R(t,\xi_t)\textcolor{black}{\pa_{\xi\xi}}R(t,\xi_t)}{R^3(t,\xi_t)}\label{eq:z_xixi}
\end{align}
Injecting Equations \eqref{eq:z_t}, \eqref{eq:z_xi} and  \eqref{eq:z_xixi} in Equation\eqref{eq:itos}, we get,
\begin{align}
d\zeta(t,\xi_t)=&\bigg[\frac{\gamma\sigma^2R^2(t,\xi_t)- \textcolor{black}{\pa_{t}}R(t,\xi_t)}{R^2(t,\xi_t)}+\frac{\sigma^2}{2\textcolor{black}{(\pa_{\xi}\chi)}^2(t,\xi_t)}.\frac{\big(2\textcolor{black}{(\pa_{\xi}R)}^2(t,\xi_t)-R(t,\xi_t)\textcolor{black}{\pa_{\xi\xi}}R(t,\xi_t)\big)}{R^3(t,\xi_t)}\bigg]dt\notag\\
&+\notag\Big[\frac{1}{\textcolor{black}{\pa_{\xi}}\chi (t,\xi_t)}.\frac{-\textcolor{black}{\pa_{\xi}}R(t,\xi_t)}{R^2(t,\xi_t)}\Big] dY_t
\\=&\bigg[\gamma\sigma^2-\frac{1}{R^2(t,\xi_t)}\Big( \textcolor{black}{\pa_{t}}R(t,\xi_t)+\frac{\sigma^2\textcolor{black}{\pa_{\xi\xi}}R(t,\xi_t)}{2\textcolor{black}{(\pa_{\xi}\chi)}^2(t,\xi_t)}\Big)+\frac{\sigma^2\textcolor{black}{(\pa_{\xi}R)}^2(t,\xi_t)}{\textcolor{black}{(\pa_{\xi}\chi)}^2(t,\xi_t)R^3(t,\xi_t)}\bigg]dt\notag\\-&\Big[\frac{\textcolor{black}{\pa_{\xi}}R(t,\xi_t)}{R^2(t,\xi_t)\textcolor{black}{\pa_{\xi}}\chi (t,\xi_t)}\Big] dY_t
\end{align}
Using Equation \eqref{eq:pder}, we finally get,
\begin{align}
d\zeta(t,\xi_t)=&\bigg[\gamma\sigma^2+\frac{1}{R^2(t,\xi_t)}.\frac{\gamma\sigma^4(T-t)\textcolor{black}{(\pa_{\xi}R)}^2(t,\xi_t)}{\textcolor{black}{(\pa_{\xi}\chi)}^3(t,\xi_t)}+\frac{\sigma^2\textcolor{black}{(\pa_{\xi}R)}^2(t,\xi_t)}{\textcolor{black}{(\pa_{\xi}\chi)}^2(t,\xi_t)R^3(t,\xi_t)}\bigg]dt\notag\\ -&\Big[\frac{\textcolor{black}{\pa_{\xi}}R(t,\xi_t)}{R^2(t,\xi_t)\textcolor{black}{\pa_{\xi}}\chi(t,\xi_t)}\Big] dY_t
\end{align}
\end{appendix}

% We can write an FBSDE to solve for $P$. 
 %\begin{align}\label{fbsde}
% P_t&=\phi_\xi(\xi_T)-\int_t^T Z_r dW_r\\
 %\xi_t&=\xi+\int_0^t \sigma+\gamma\sigma^2Z_r(T-r)dW_r.
 %\end{align}
%Not sure if wellposedness is easy?

%The problem is to find $\phi$ so that $P_T\sim \tilde v$. 

%%%%%%%%%%%%%%%%%%%%%%%%%%%%%%%%%%%%%%%%%%%%%%
%% Support information (funding), if any,   %%
%% should be provided in the                %%
%% Acknowledgements section.                %%
%%%%%%%%%%%%%%%%%%%%%%%%%%%%%%%%%%%%%%%%%%%%%%
\section*{Acknowledgements}
%The authors would like to thank the anonymous referees, an Associate
%Editor and the Editor for their constructive comments that improved the
%quality of this paper.

The authors would like to thank Gregoire Loeper for fruitful discussions. They also thank
two anonymous referees for their valuable remarks.
%%%%%%%%%%%%%%%%%%%%%%%%%%%%%%%%%%%%%%%%%%%%%%
%% Supplementary Material, if any, should   %%
%% be provided in {supplement} environment  %%
%% with title and short description.        %%
%%%%%%%%%%%%%%%%%%%%%%%%%%%%%%%%%%%%%%%%%%%%%%
\iffalse
\begin{supplement}
\textbf{Title of Supplement A}.
Short description of Supplement A.
\end{supplement}
\begin{supplement}
\textbf{Title of Supplement B}.
Short description of Supplement B.
\end{supplement}
\fi

%%%%%%%%%%%%%%%%%%%%%%%%%%%%%%%%%%%%%%%%%%%%%%%%%%%%%%%%%%%%%
%%                  The Bibliography                       %%
%%                                                         %%
%%  imsart-???.bst  will be used to                        %%
%%  create a .BBL file for submission.                     %%
%%                                                         %%
%%  Note that the displayed Bibliography will not          %%
%%  necessarily be rendered by Latex exactly as specified  %%
%%  in the online Instructions for Authors.                %%
%%                                                         %%
%%  MR numbers will be added by VTeX.                      %%
%%                                                         %%
%%  Use \cite{...} to cite references in text.             %%
%%                                                         %%
%%%%%%%%%%%%%%%%%%%%%%%%%%%%%%%%%%%%%%%%%%%%%%%%%%%%%%%%%%%%%

%% if your bibliography is in bibtex format, uncomment commands:
\bibliographystyle{imsart-number} % Style BST file (imsart-number.bst or imsart-nameyear.bst)
\bibliography{ref}       % Bibliography file (usually '*.bib')

%% or include bibliography directly:

\end{document}